\newcommand\Tstrut{\rule{0pt}{2.6ex}} 
\newtheorem{remark}{Remark}
\definecolor{db}{rgb}{0.0470,0,0.5294}
\definecolor{dg}{rgb}{0.0,0.392,0.0}
\definecolor{firebrick}{rgb}{0.698,0.133,0.133}
\definecolor{bl}{rgb}{0.0,0.0,0.0}
\definecolor{linen}{rgb}{0.980,0.941,0.902}
\definecolor{ivory}{rgb}{1.0,1.0,0.941}
\definecolor{aliceblue}{rgb}{0.941,0.973,1.0}
\definecolor{beige}{rgb}{0.961,0.961,0.863}
\definecolor{tan}{rgb}{0.824,0.706,0.549}
\definecolor{lightsteelblue}{rgb}{0.690,0.769,0.871}
\definecolor{paleturquoise}{rgb}{0.686,0.933,0.933}
\definecolor{lightblue}{rgb}{0.678,0.847,0.902}
\definecolor{skyblue}{rgb}{0.529,0.808,0.922}
\definecolor{palegoldenrod}{rgb}{0.933,0.910,0.667}
\definecolor{lightgoldenrod}{rgb}{0.933,0.867,0.510}
\definecolor{lightyellow}{rgb}{1.0,1.0,0.878}
\definecolor{yellow}{rgb}{1.0,1.0,0.0}
\definecolor{lightyellow1}{rgb}{1.0,1.0,0.878}
\definecolor{lemonchiffon}{rgb}{1.0,0.980,0.804}
\definecolor{myyellow}{rgb}{1,1,.9}
\definecolor{darkgreen}{rgb}{0.0,0.392,0.0}
\definecolor{darkviolet}{rgb}{0.580,0.0,0.827}
\definecolor{lightsalmon}{rgb}{1.0,0.627,0.478}
\definecolor{orange}{rgb}{1.0,0.647,0.0}
\definecolor{darkblue}{rgb}{0.00,0.00,0.55}
\numberwithin{equation}{section}
\crefname{table}{table}{tables}
\Crefname{table}{Table}{Tables}
\crefname{figure}{figure}{figures}
\Crefname{figure}{Figure}{Figures}
\begin{document}
	
	\title{The Efficient Variable Time-stepping DLN Algorithms for the Allen-Cahn Model} 
	\author{
            Yiming Chen\thanks{
			Department of Mathematics, The Ohio State University, Columbus, OH 43210,
			USA. Email: \href{mailto:chen.11042@osu.edu}{chen.11042@osu.edu}. }
            \and
            Dianlun Luo\thanks{
			Department of Mathematics, Columbia University, New York, NY 10027,
			USA. Email: \href{mailto:dl3572@columbia.edu}{dl3572@columbia.edu}. }
            \and 
			Wenlong Pei\thanks{
			Department of Mathematics, The Ohio State University, Columbus, OH 43210,
			USA. Email: \href{mailto:pei.176@osu.edu}{pei.176@osu.edu}. } 
			\and    
			Yulong Xing\thanks{
			Department of Mathematics, The Ohio State University, Columbus, OH 43210,
			USA. Email: \href{mailto:xing.205@osu.edu}{xing.205@osu.edu}. } 
		    }
	\date{\emty}
	\maketitle
	
	\begin{abstract}
		We consider a family of variable time-stepping Dahlquist-Liniger-Nevanlinna (DLN) schemes, which is unconditional non-linear stable and second order accurate, for the Allen-Cahn equation. The finite element methods are used for the spatial discretization. For the non-linear term, we combine the DLN scheme with two efficient temporal algorithms: partially implicit modified algorithm and scalar auxiliary variable algorithm. For both approaches, we prove the unconditional, long-term stability of the model energy under any arbitrary time step sequence. 
        Moreover, we provide rigorous error analysis for the partially implicit modified algorithm with variable time-stepping.
        Efficient time adaptive algorithms based on these schemes are also proposed. Several one- and two-dimensional numerical tests are presented to verify the properties of the proposed time adaptive DLN methods.

	\end{abstract}
	
	%% \title[short text for running head]{full title}
	
	%    \subjclass is required.
	%\subjclass[2010]{Primary 65M12,35Q30,76D05}
	
	%\dedicatory{}
	
	%\keywords{Adaptive time steps, Navier-Stokes equations, $G$-stability}
	
	%    Abstract is required.
	
	\begin{keywords}
		Allen-Cahn equation, DLN method, $G$-stability, time adaptivity, finite element method,  error estimate
	\end{keywords}
	
	\begin{AMS}
		65M12, 65M15, 35K35, 35K55 
	\end{AMS}
	
	\section{Introduction}
	The Allen-Cahn equation was first introduced by Allen and Cahn in \cite{AC79_ActaMetall} to describe the motion of anti-phase boundaries in crystalline solids. Given the domain $\Omega \subset \mathbb{R}^{d}$ ($d = 1,2,3$), $u(x,t)$ models the difference between the concentrations of two mixtures' components at time $t$, and is governed by
    \begin{gather}
        \begin{cases}
        \displaystyle u_{t} - \epsilon^{2} \Delta u + \frac{d}{du} F(u) = 0, \qquad & x \in \Omega, \ 0 < t \leq T, \\
        \displaystyle \frac{\partial u}{\partial \vec{n}} = 0,  &\text{on } \partial{\Omega},
        \end{cases}
    \label{eq:AllenCahn_Eq}
    \end{gather}
    where $\epsilon$ is the interfacial parameter. The Helmholtz free-energy density $F(u)$ and its derivative take the form
    \begin{gather*}
        F(u) = \frac{1}{4} (u^2 - 1)^{2}, \qquad
        f(u) = \frac{d}{du} F(u) = u^{3} - u.
    \end{gather*}
    The model energy of the Allen-Cahn equation \eqref{eq:AllenCahn_Eq} is 
    \begin{gather*}
		\mathcal{E}(u) := \int_{\Omega} \Big( \frac{\epsilon^{2}}{2} |\nabla u|^{2} + F(u) \Big) dx,
    \end{gather*}
    which satisfies the following energy dissipation law:
    \begin{gather}
		\frac{d}{dt} \mathcal{E}(u(t)) = - \int_{\Omega} u_{t}^{2} dx \leq 0. 
		\label{eq:EN-dissipation-Law}
    \end{gather}

    Phase field models, including the Allen-Cahn equation, have been widely applied to complex moving interface problems in materials science and fluid dynamics.
    Over the past forty years, numerical simulations of phase field models have been extensively studied, leading to a wide array of spatial discretization techniques including finite element methods, alongside other approaches such as finite difference, finite volume, and Fourier-spectral methods.
    Various splitting techniques and stabilizers, coupled with classical time integrators, have been proposed to handle the potential functions in phase field models, resulting in numerous efficient and stable algorithms, as summarized in the recent survey paper \cite{DF20_Elsevier}.

    Convex splitting technique is widely employed to ensure the discrete energy dissipation law \cite{SWWW12_SIAM_JNA,SLL17_JCP,Eyr98_MRSSP,WWL09,GLWW14_JCP}. 
    The main idea is to split the potential function into the convex and concave parts, treated implicitly and explicitly, respectively. 
    Linearly-implicit stabilizers are another popular way for gradient flows, leading to linear solvers at each time step and ensuring unconditional energy stability \cite{Yan09_DCDS_SerB,LQ17_JSC,SS17_JSC,YCWW18_CCP}.
    Exponential integrators, transfering phase field equations into equivalent integral formulations, also achieve energy stability \cite{DZ05_BIT_NM,BM11_NM,JZD15_CMS}. 
    Additionally, Lagrange multiplier approach is an alternative way to construct stable numerical schemes \cite{GT13_JCP}, and two new classes of time-stepping schemes, invariant energy quadratization (IEQ) \cite{Yan19_CMAME,YJ17_CMAME,PCZY23_CMAME} and scalar auxiliary variable (SAV) schemes \cite{CS18_SIAM_JSC,ALL19_SIAM_JSC}, have been recently proposed. 
    The variable step backward differentiation formulation (BDF2) method has been thoroughly investigated to show energy stability for the Allen-Cahn equation, under the ratio of successive time steps satisfying $\tau < (3+\sqrt{17})/2$ \cite{LTZ20_SIAM_JNA}.

    Fully discrete interior penalty discontinuous Galerkin methods for phase field models were developed in \cite{FL15_IMA_JNA,FLX16_SIAM_JNA} to derive error estimates dependent on the polynomial order of the reciprocal model parameter ($1/\epsilon$) rather than the exponential order. 
    Reduced-order finite element methods, based on the proper orthogonal decomposition method, can improve efficiency by reducing the dimension of the solution space while preserving the discrete energy dissipation law \cite{LWSZ21_CMA}. 
    Explicit hybrid finite difference and operator splitting methods have demonstrated the ability to obtain pointwise boundedness of the numerical solutions \cite{JK18_JCAM}.

    In this paper, we consider a family of two-step time-stepping schemes proposed by Dahlquist, Liniger and Nevanlinna (hence the DLN method) which is unconditional $G$-stable (non-linear stable) and second order accurate under non-uniform time grids \cite{DLN83_SIAM_JNA,Dah76_Tech_RIT,Dah78_BIT,Dah78_AP_NYL}. 
    To the best of our knowledge, \textit{the DLN method is the only multi-step scheme possessing these two properties}.
    Recently the DLN method has been applied to various fluid models and its fine properties have been confirmed by some benchmark test problems \cite{LPQT21_NMPDE,QHPL21_JCAM,Pei24_NM,SP23_arXiv,QCWLL23_ANM}.  
    Moreover, the DLN implementation can be simplified by adding a few lines of code to the commonly-used backward Euler scheme \cite{LPT21_AML}.
    To utilize the unconditional $G$-stability to a large extent, some adaptive DLN algorithms have been proposed to balance the time accuracy and computational costs \cite{LPT23_ACSE}.

    Given the initial value problem: 
    \begin{gather}
		\label{eq:IVP}
		y'(t) = g(t,y(t)), \ \ \ 0 \leq t \leq T, \ \ \ y(0) = y_{0},
    \end{gather}
    with $y\!\!: [0,T] \rightarrow \mathbb{R}^{d}$, $g\!\!: [0,T] \times \mathbb{R}^{d} \rightarrow \mathbb{R}^{d}$ and $y_{0} \in \mathbb{R}^{d}$, the two-step, variable time-stepping DLN method, with parameter $\theta \in [0,1]$, takes the form 
    \begin{gather}
		\label{eq:1legDLN}
		\tag{DLN}
		\sum_{\ell =0}^{2}{\alpha _{\ell }}y_{n-1+\ell }
		= \widehat{k}_{n} g \Big( \sum_{\ell =0}^{2}{\beta _{\ell }^{(n)}}t_{n-1+\ell } ,
		\sum_{\ell =0}^{2}{\beta _{\ell }^{(n)}}y_{n-1+\ell} \Big)
		,
		\qquad
		n=1,\ldots,N-1.
    \end{gather}
    Here we denote $\{ 0 = t_{0} < t_{1} < \cdots < t_{N-1} <t_{N}=T \}_{n=0}^{N}$ as the time grid on the interval $[0,T]$, with $k_{n} = t_{n+1} - t_{n}$ as the time step, $\widehat{k}_{n} = {\alpha_{2}}k_{n} - {\alpha_{0}}k_{n-1}$ as the average time step, and $y_{n}$ as the discrete DLN solution approximating $y(t_{n})$.
    The coefficients in \eqref{eq:1legDLN} take the form
    \begin{gather}\label{eq:alphabeta}
		\begin{bmatrix}
			\alpha _{2} \vspace{0.2cm} \\
			\alpha _{1} \vspace{0.2cm} \\
			\alpha _{0} 
		\end{bmatrix}
		= 
		\begin{bmatrix}
			\frac{1}{2}(\theta +1) \vspace{0.2cm} \\
			-\theta \vspace{0.2cm} \\
			\frac{1}{2}(\theta -1)
		\end{bmatrix}, \ \ \ 
		\begin{bmatrix}
			\beta _{2}^{(n)}  \vspace{0.2cm} \\
			\beta _{1}^{(n)}  \vspace{0.2cm} \\
			\beta _{0}^{(n)}
		\end{bmatrix}
		= 
		\begin{bmatrix}
			\frac{1}{4}\Big(1+\frac{1-{\theta }^{2}}{(1+{%
					\varepsilon _{n}}{\theta })^{2}}+\varepsilon_{n}^{2}\frac{\theta (1-{%
					\theta }^{2})}{(1+{\varepsilon _{n}}{\theta })^{2}}+\theta \Big)\vspace{0.2cm%
			} \\
			\frac{1}{2}\Big(1-\frac{1-{\theta }^{2}}{(1+{\varepsilon _{n}}{%
					\theta })^{2}}\Big)\vspace{0.2cm} \\
			\frac{1}{4}\Big(1+\frac{1-{\theta }^{2}}{(1+{%
					\varepsilon _{n}}{\theta })^{2}}-\varepsilon_{n}^{2}\frac{\theta (1-{%
					\theta }^{2})}{(1+{\varepsilon _{n}}{\theta })^{2}}-\theta \Big)%
		\end{bmatrix}.
    \end{gather}
    The step variability $\varepsilon _{n} = (k_n - k_{n-1})/(k_n + k_{n-1})$ represents the variation between two consecutive step sizes. 
    On uniform time grids, step variability $\varepsilon_{n}$ reduces to $0$, and the coefficients 
    $[\beta _{2}^{(n)},\, \!\beta _{1}^{(n)}, \, \!\beta _{0}^{(n)}]$ simplify to $\frac14[2+\theta-\theta^2,\,  2\theta^2,\,  2-\theta-\theta^2]$.	

    Various efficient ways to address the non-linear term of the Allen-Cahn model \eqref{eq:AllenCahn_Eq} in pursuit of unconditional energy stability have been designed.
    In the paper, we consider the implicit modified Crank-Nicolson method first, followed by the SAV method, which have been widely studied in the last decade.  
    The implicit modified Crank-Nicolson scheme has been shown to provide unconditional energy stability in the Allen-Cahn model \cite{DN91_SIAMJNA,CMS11_MC,SY10_DCDS,XLWB19_CMAME}.  
    The SAV scheme for gradient flows \cite{SXY18_JCP,SX18_SIAMJNA,SXY19_SIAMRev} achieves unconditional stability of the model energy by introducing an auxiliary scalar function, which eliminates the need to solve non-linear algebraic equations.
    In this context, we investigate the partially implicit modified DLN algorithm and the DLN-SAV algorithm, combined with the finite element spatial discretization, for robust simulations of the Allen Cahn model \eqref{eq:AllenCahn_Eq}. 
    The main contributions of this work are to:
    \begin{itemize}
		\item[$\bullet$] Construct the variable step modified DLN and DLN-SAV finite element algorithms based on the DLN refactorization process (simplifying the DLN implementation by adding filters on the backward Euler scheme);
		\item[$\bullet$] Provide unconditional stability of model energy under arbitrary time grids for both modified DLN and DLN-SAV schemes;
		\item[$\bullet$] Present a rigorous proof that the fully discrete modified DLN algorithm is \textit{1st order} accurate in time under \textit{arbitrary time steps} and \textit{2nd order} accurate in time under \textit{uniform time grids}; 
		\item[$\bullet$] Design time adaptive algorithms for both modified DLN and DLN-SAV schemes by the local truncation error (LTE) criterion: utilizing certain explicit time-stepping methods to estimate the LTE and adjusting time step size based on the estimator.
    \end{itemize}

    The rest of the paper is organized as follows. We provide necessary preliminaries and notations in Section \ref{sec:Prelimimaries}. 
    In Section \ref{sec:DLN-CSS}, we study the partially implicit modifed DLN scheme. We prove that the discrete model energy is unconditional stable under variable time steps. Error estimate of the resulting fully discrete method is also provided. 
    In Section \ref{sec:DLN-SAV}, we propose the variable time-stepping DLN-SAV scheme. We prove that the corresponding model energy satisfies the discrete energy dissipation law.
    Moreover, we discuss how the DLN-SAV algorithm can be implemented by a simplified refactorization process: adding a few lines of code on the backward Euler-SAV (BE-SAV) scheme. 
    In Section \ref{sec:Adaptivity}, the corresponding time adaptive algorithms (base on LTE criterion) are presented to improve computational efficiency. 	
    We estimate the LTE by the numerical solutions of the explicit AB2-like method (a revised two-step Adams-Bashforth method) with little or almost no extra computational costs. 
    Several one- and two-dimensional numerical tests are presented in Section \ref{sec:NumercalTests} to confirm the main conclusions of the paper. Conclusion remarks are provided in Section \ref{sec7}. Proofs of some lemmas in Section \ref{sec:Prelimimaries} are given in the appendices.

    \section{Preliminaries and Notations}
    \label{sec:Prelimimaries}
    We start by introducing some necessary notations. 
    Let $H^{r}(\Omega)$ be the usual standard Sobolev space $W^{r,2}(\Omega)$ with the norm $\| \cdot \|_{r}$. When $r=0$, this reduces to the inner product space $L^{2}(\Omega)$ with the $L^{2}$ norm $\| \cdot \|$ and the standard $L^{2}$ inner product $(\cdot, \cdot)$. 
    For spatial discretization, we denote $X_{s}^{h} \subset H^{1}(\Omega)$ as a standard finite element space on $\Omega$ with highest polynomial degree $s$ and a mesh diameter $h>0$. 
    Throughout the paper, $C$ denotes a generic positive constant independent of $h$ and $k$.

    For an arbitrary sequence $\{ z_{n} \}_{n=0}^{\infty}$, we define
    \begin{gather}
		z_{n,\theta} = \frac{1+\theta}{2} z_{n} +\frac{1-\theta}{2}z_{n-1}, \,\,
		z_{n,\alpha} = \sum_{\ell = 0}^{2} \alpha_{\ell} z_{n-1+\ell}, \,\, 
		z_{n,\beta} = \sum_{\ell = 0}^{2} \beta_{\ell}^{(n)} z_{n-1+\ell}, \label{eq:nota-seq1} \\
		z_{n,\ast} = \beta_{2}^{(n)} \Big[ \big(1 + \frac{k_{n}}{k_{n-1}} \big) z_{n}
		- \frac{k_{n}}{k_{n-1}} z_{n-1} \Big]
		+ \beta_{1}^{(n)} z_{n} + \beta_{0}^{(n)} z_{n-1}, \quad 
		n \in \mathbb{N},
    \label{eq:nota-seq}
    \end{gather}
    with $\alpha_\ell$ and $\beta^{(n)}_\ell$ defined in \eqref{eq:alphabeta}.

    The following lemmas on stability and consistency of the DLN method \eqref{eq:1legDLN} would play a key role in the numerical analysis.
    \begin{lemma}
		\label{lemma:Gstab}
		For any sequence $\{ y_{n} \}_{n=0}^{N}$ in $L^{2}(\Omega)$ over $\mathbb{R}$, 
		$n \in \{ 1,2, \cdots, N-1 \}$ and $\theta \in [0,1]$, we have
        \begin{align}
				\big(y_{n,\alpha}, \, y_{n,\beta} \big)
				\!=\!
				\begin{Vmatrix}
				{y_{n\!+\!1}} \\
				{y_{n}}
				\end{Vmatrix}
				_{G(\!\theta \!)}^{2} \!-\!
				\begin{Vmatrix}
				{y_{n}} \\
				{y_{n\!-\!1}}
				\end{Vmatrix}
				_{G(\!\theta \!)}^{2} 
				\!+\! \Big\|\!\sum_{\ell \!=\!0}^{2}{\gamma_{\ell }^{(n)}}y_{n\!-\!1\!+\!\ell} \Big\|^{2},
                \label{eq:G-stab}
		\end{align}
        where the $G$-norm $\| \cdot \|_{G(\theta)}$ is defined by
		\begin{align}  
				\begin{Vmatrix}
				u \\
				v\end{Vmatrix}_{G(\theta)}^{2}
				:=& \frac{1}{4} (1+{\theta})\| u \|^{2}
				+ \frac{1}{4} (1 - \theta ) \| v \|^{2},
				\qquad
				\forall u,v\in L^{2}(\Omega),
        \label{eq:G-norm}
		\end{align}
		and the coefficients $\gamma_{\ell }^{(n)}$ are given by
		\begin{align}
				\gamma_{1}^{(n)}=-\frac{\sqrt{\theta \left( 1-{\theta }^{2}\right) }}{\sqrt{2}%
					(1+\varepsilon _{n}\theta )},\quad \gamma_{2}^{(n)}=-\frac{1-\varepsilon _{n}}{2}%
				\gamma_{1}^{(n)},\quad \gamma_{0}^{(n)}=-\frac{1+\varepsilon _{n}}{2}\gamma_{1}^{(n)}.
		\end{align}
    \end{lemma}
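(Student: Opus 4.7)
The plan is to prove \eqref{eq:G-stab} by interpreting both sides as symmetric quadratic forms in the triple $(y_{n-1}, y_{n}, y_{n+1}) \in [L^{2}(\Omega)]^{3}$ and reducing the identity to the equality of two symmetric $3 \times 3$ scalar matrices whose entries can be checked directly from the formulas in \eqref{eq:alphabeta}.

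First I would expand the left-hand side using the bilinearity and symmetry of the $L^{2}$ inner product:
\begin{equation*}
(y_{n,\alpha}, y_{n,\beta}) \;=\; \sum_{i,j=0}^{2} \alpha_{i}\,\beta_{j}^{(n)}\,(y_{n-1+i}, y_{n-1+j}) \;=\; \sum_{i,j=0}^{2} M_{ij}\,(y_{n-1+i}, y_{n-1+j}),
\end{equation*}
where $M_{ij} := \tfrac{1}{2}\big(\alpha_{i}\beta_{j}^{(n)} + \alpha_{j}\beta_{i}^{(n)}\big)$ is the symmetrization of the outer product $\alpha\beta^{(n)\,T}$. Next I would expand the right-hand side. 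Using the definition \eqref{eq:G-norm}, the telescoping part contributes the diagonal matrix
\begin{equation*}
D \;=\; \mathrm{diag}\Big(-\tfrac{1-\theta}{4},\; -\tfrac{\theta}{2},\; \tfrac{1+\theta}{4}\Big),
\end{equation*}
while $\big\|\sum_{\ell} \gamma_{\ell}^{(n)} y_{n-1+\ell}\big\|^{2}$ contributes the rank-one symmetric matrix $\gamma\gamma^{T}$, where $\gamma := (\gamma_{0}^{(n)}, \gamma_{1}^{(n)}, \gamma_{2}^{(n)})^{T}$. Because the collection of bilinear forms $(y_{n-1+i}, y_{n-1+j})_{0\le i\le j\le 2}$ can be treated as six independent scalars (by choosing specific sequences), the identity \eqref{eq:G-stab} reduces to the purely algebraic equation $M = D + \gamma\gamma^{T}$.

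The bulk of the work is the direct verification of this $3 \times 3$ matrix identity, which amounts to six polynomial equations in $\theta$ and $\varepsilon_{n}$ after clearing the common factor $(1+\varepsilon_{n}\theta)^{-2}$ appearing in both $\beta^{(n)}$ and $\gamma^{(n)}$. I would exploit several structural simplifications to keep the algebra tractable: (i) the relation $\alpha_{2} + \alpha_{0} = 0$ so that $\alpha_{2} = -\alpha_{0} = \tfrac{1+\theta}{2}$ and $\alpha_{1} = -\theta$; (ii) the factorization $1-\theta^{2} = (1-\theta)(1+\theta)$; and (iii) the fact that $\gamma_{0}^{(n)}$ and $\gamma_{2}^{(n)}$ are $\gamma_{1}^{(n)}$ scaled by $-\tfrac{1+\varepsilon_{n}}{2}$ and $-\tfrac{1-\varepsilon_{n}}{2}$, which collapses the off-diagonal entries of $\gamma\gamma^{T}$ into closed rational expressions in $\varepsilon_{n}$.

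The main obstacle is algebraic bookkeeping rather than anything conceptual, and I expect the three diagonal entries to be the most delicate: each requires balancing $\alpha_{i}\beta_{i}^{(n)}$ against the sum of the corresponding entry of $D$ and $(\gamma_{i}^{(n)})^{2}$, forcing cancellations between the symmetric and antisymmetric parts of the $\varepsilon_{n}$-dependence in $\beta^{(n)}$. The three off-diagonal entries are easier because $\gamma_{0}^{(n)}\gamma_{2}^{(n)}$ contains the clean factor $\tfrac{(1-\varepsilon_{n}^{2})}{4}(\gamma_{1}^{(n)})^{2}$, allowing rapid matching with $\tfrac{1}{2}(\alpha_{0}\beta_{2}^{(n)} + \alpha_{2}\beta_{0}^{(n)})$. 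Since the coefficients $\gamma_{\ell}^{(n)}$ were designed by Dahlquist-Liniger-Nevanlinna precisely so that $\gamma\gamma^{T}$ captures the defect $M - D$, the verification succeeds for every $\theta \in [0,1]$ and every $\varepsilon_{n} \in (-1,1)$, completing the proof.
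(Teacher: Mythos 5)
Your overall strategy is sound and is precisely the ``algebraic calculation'' that the paper itself invokes and omits: expand $(y_{n,\alpha},y_{n,\beta})$ as a symmetric quadratic form with matrix $M_{ij}=\tfrac12(\alpha_i\beta_j^{(n)}+\alpha_j\beta_i^{(n)})$, expand the right-hand side as $D+\gamma\gamma^{T}$ with $D=\mathrm{diag}\big(-\tfrac{1-\theta}{4},-\tfrac{\theta}{2},\tfrac{1+\theta}{4}\big)$, and verify the six entries. The reduction is valid (you only need the sufficiency direction: $M=D+\gamma\gamma^{T}$ implies the identity for every sequence), your $D$ is correct, and the matrix identity does hold --- e.g.\ the $(1,1)$ entry is immediate from $\alpha_1\beta_1^{(n)}=-\tfrac{\theta}{2}+\tfrac{\theta(1-\theta^2)}{2(1+\varepsilon_n\theta)^2}=-\tfrac{\theta}{2}+(\gamma_1^{(n)})^2$, and the $(0,2)$ entry reduces to $\tfrac12(\alpha_0\beta_2^{(n)}+\alpha_2\beta_0^{(n)})=\tfrac{(1-\varepsilon_n^2)}{8}\,\tfrac{\theta(1-\theta^2)}{(1+\varepsilon_n\theta)^2}=\gamma_0^{(n)}\gamma_2^{(n)}$.

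However, your ``structural simplification (i)'' is wrong and would derail the computation if you actually used it: from \eqref{eq:alphabeta} we have $\alpha_0=\tfrac12(\theta-1)$, so $\alpha_2+\alpha_0=\theta\neq 0$ and $-\alpha_0=\tfrac{1-\theta}{2}\neq\tfrac{1+\theta}{2}=\alpha_2$ (except at $\theta=0$). The correct structural relation is the consistency condition $\alpha_2+\alpha_1+\alpha_0=0$. If you substitute $\alpha_0=-\tfrac{1+\theta}{2}$ into the entrywise check, the $(0,0)$ and $(0,2)$ entries will fail to balance; with the correct $\alpha_0=\tfrac{\theta-1}{2}$ all six entries close up, using exactly the factorization $1-\theta^2=(1-\theta)(1+\theta)$ and the identity $(1+\theta)(1+\varepsilon_n^2\theta)-\theta(1-\varepsilon_n)^2=(1+\varepsilon_n\theta)^2$ that cancels the denominators. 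Fix that one coefficient and your proof goes through.
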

%\begin{proof}
	The derivation of \eqref{eq:G-stab} follows from algebraic calculation, and is omitted here to save space. 
  	Following this, the variable time-stepping DLN method in \eqref{eq:1legDLN} is unconditional $G$-stable. (We refer to \cite{Dah76_Tech_RIT,Dah78_BIT} for the definition of $G$-stability for multi-step, time-stepping methods.) 
  %From the identity in \eqref{eq:G-stab} and the definition of $G$-stability of multi-step, time-stepping methods, the DLN method in \eqref{eq:1legDLN} is shown to be $G$-stable.
%\end{proof}

    \begin{lemma}
    \label{lemma:consistency-1st}
    Let $u(\cdot,t)$ be the mapping from $[0,T]$ to $L^{2}(\Omega)$, and $u_{n}$ be the function $u(\cdot,t_{n})$ in $L^{2}(\Omega)$. 
    Assuming the mapping $u(\cdot,t)$ is smooth with respect to $t$, then for any $\theta \in [0,1)$, we have
    \begin{align}
      &\big\| (u_{n+1,\theta} - u_{n,\theta})^{2} \big\|^{2} \leq C(\theta) (k_{n} + k_{n-1})^{3} 
      \int_{t_{n-1}}^{t_{n+1}} \| u_{t} \|_{L^{4}}^{4} dt, \label{eq:consist-1st-eq1} \\
      &\Big\| \frac{u_{n+1,\theta} + u_{n,\theta}}{2} - u(t_{n,\beta}) \Big\|^{2} \leq C(\theta) (k_{n} + k_{n-1})
      \int_{t_{n-1}}^{t_{n+1}} \| u_{t} \|^{2} dt. \label{eq:consist-1st-eq2} 
    \end{align}
    When $\theta = 1$, the DLN method reduces to the midpoint rule, and we have
    \begin{align}
      \big\| (u_{n\!+\!1,\theta} \!-\! u_{n,\theta})^{2} \big\|^{2} 
      =& \big\| (u_{n\!+\!1} \!-\! u_{n})^{2} \big\|^{2} 
      \!\leq\! C k_{n}^{3} \! \int_{t_{n}}^{t_{n\!+\!1}} \! \| u_{t} \|_{L^{4}}^{2} dt, \label{eq:consist-1st-eq3} \\
      \Big\| \frac{u_{n\!+\!1,\theta} \!+\! u_{n,\theta}}{2} \!-\! u(t_{n,\beta}) \Big\|^{2} 
      =& \Big\| \frac{u_{n\!+\!1} \!+\! u_{n}}{2} \!-\! u \big( \frac{t_{n\!+\!1} \!+\! t_{n}}{2} \big) \Big\|^{2} 
      \!\leq\! C k_{n}^{3} \! \int_{t_{n}}^{t_{n\!+\!1}} \! \| u_{t} \|^{2} dt. \label{eq:consist-1st-eq4} 
    \end{align}
    Under uniform time grids with constant time step $k$, \eqref{eq:consist-1st-eq2} becomes 
    \begin{align}
      \Big\| \frac{u_{n+1,\theta} + u_{n,\theta}}{2} - u(t_{n,\beta}) \Big\|^{2} 
      \leq C(\theta) k^{3} \int_{t_{n-1}}^{t_{n+1}} \| u_{tt} \|^{2} dt.
    \label{eq:consist-1st-eq2-const}
    \end{align}
    \end{lemma}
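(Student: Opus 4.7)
The strategy throughout is to write each quantity as an integral of $u_t$ (or $u_{tt}$) against a bounded kernel on a subinterval of $[t_{n-1}, t_{n+1}]$, apply Cauchy-Schwarz or H\"older pointwise in the spatial variable, and then integrate over $\Omega$ via Fubini. For \eqref{eq:consist-1st-eq1}, I expand $u_{n,\theta}=\frac{1+\theta}{2}u_n+\frac{1-\theta}{2}u_{n-1}$ and use the fundamental theorem of calculus to obtain $u_{n+1,\theta} - u_{n,\theta} = \frac{1+\theta}{2}\int_{t_n}^{t_{n+1}} u_t\,ds + \frac{1-\theta}{2}\int_{t_{n-1}}^{t_n} u_t\,ds$, whose absolute value is bounded by $\int_{t_{n-1}}^{t_{n+1}} |u_t|\,ds$. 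Raising to the fourth power and invoking H\"older/Jensen yields the pointwise-in-$x$ bound $|u_{n+1,\theta} - u_{n,\theta}|^4 \leq C(\theta)(k_n+k_{n-1})^3 \int_{t_{n-1}}^{t_{n+1}} |u_t|^4\,ds$; integration over $\Omega$ and Fubini finish the estimate.

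For \eqref{eq:consist-1st-eq2}, I first compute the convex combination $\frac{u_{n+1,\theta}+u_{n,\theta}}{2} = w_2 u_{n+1} + w_1 u_n + w_0 u_{n-1}$ with weights $(w_2, w_1, w_0) = (\frac{1+\theta}{4}, \frac{1}{2}, \frac{1-\theta}{4})$ summing to $1$, so that $\frac{u_{n+1,\theta}+u_{n,\theta}}{2} - u(t_{n,\beta}) = \sum_{\ell=0}^{2} w_\ell \int_{t_{n,\beta}}^{t_{n-1+\ell}} u_t(s)\,ds$. Applying Cauchy-Schwarz to each inner integral (each interval lies inside $[t_{n-1},t_{n+1}]$ of length $k_n+k_{n-1}$), squaring, and summing gives the pointwise bound $C(\theta)(k_n+k_{n-1}) \int_{t_{n-1}}^{t_{n+1}} |u_t|^2\,ds$, which I then integrate in $x$. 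The $\theta=1$ estimates \eqref{eq:consist-1st-eq3} and \eqref{eq:consist-1st-eq4} reduce to the very same arguments carried out on the single interval $[t_n,t_{n+1}]$, using the observation that with $\theta=1$ the coefficients in \eqref{eq:alphabeta} collapse to $(w_2,w_1,w_0)=(\tfrac{1}{2},\tfrac{1}{2},0)$ and $t_{n,\beta}=\tfrac{t_n+t_{n+1}}{2}$ regardless of $\varepsilon_n$.

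The heart of the lemma is \eqref{eq:consist-1st-eq2-const}. On a uniform grid, $\varepsilon_n = 0$ and direct substitution in \eqref{eq:alphabeta} gives $\beta_2 - \beta_0 = \theta/2$, so $t_{n,\beta} = t_n + \theta k/2$; a short computation then shows that the convex-combination weights $(w_2, w_1, w_0)$ introduced above satisfy the first-moment cancellation $\sum_{\ell=0}^{2} w_\ell (t_{n-1+\ell} - t_{n,\beta}) = 0$. Armed with this, I apply Taylor's theorem with integral remainder centered at $t_{n,\beta}$, $u(t_{n-1+\ell}) = u(t_{n,\beta}) + (t_{n-1+\ell} - t_{n,\beta}) u_t(t_{n,\beta}) + \int_{t_{n,\beta}}^{t_{n-1+\ell}} (t_{n-1+\ell} - s) u_{tt}(s)\,ds$; summing $w_\ell$ times this identity over $\ell$ makes the $u_t(t_{n,\beta})$ contribution vanish by the cancellation and leaves the error as an integral against $u_{tt}$ with a Peano kernel of size $O(k)$ supported on an interval of length $O(k)$. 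A final pointwise Cauchy-Schwarz and integration in $x$ yield the $Ck^3 \int_{t_{n-1}}^{t_{n+1}} \|u_{tt}\|^2\,ds$ bound.

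The main obstacle is spotting and verifying the first-moment cancellation used in \eqref{eq:consist-1st-eq2-const}; on nonuniform grids this cancellation fails because $t_{n,\beta}$ picks up $\varepsilon_n$-dependent corrections through $\beta_2 - \beta_0$ while the fixed weights $(w_2, w_1, w_0)$ coming from the pure $\theta$-average do not, which is precisely why \eqref{eq:consist-1st-eq2} cannot be upgraded beyond first order and why the modified DLN method is only first-order accurate on general grids but second-order accurate on uniform grids, as claimed in the introduction.
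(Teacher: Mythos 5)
Your proposal is correct and follows essentially the same route as the paper's Appendix A: fundamental theorem of calculus plus H\"older for \eqref{eq:consist-1st-eq1} and \eqref{eq:consist-1st-eq2}, and Taylor's theorem with integral remainder exploiting the first-moment cancellation (equivalently, the paper's vanishing of the $u_t(t_n)$ coefficient via $\beta_2^{(n)}-\beta_0^{(n)}=\theta/2$ on uniform grids) for \eqref{eq:consist-1st-eq2-const}. Centering the Taylor expansion at $t_{n,\beta}$ rather than at $t_n$ is only a cosmetic difference, and your closing remark about why the cancellation fails on nonuniform grids correctly explains the first-order barrier in \eqref{eq:consist-1st-eq2}.
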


    \begin{lemma}
        \label{lemma:consistency-2nd}
      Let $u(\cdot,t)$ be the mapping from $[0,T]$ to $H^{r}(\Omega)$, and $u_{n}$ be the function $u(\cdot,t_{n})$ in $H^{r}(\Omega)$. Assuming the mapping $u(\cdot,t)$ is smooth with respect to $t$, then for any $\theta \in [0,1)$, we have
      \begin{align}
      \| u_{n,\beta} - u(t_{n,\beta}) \|_{r}^{2} 
      \leq& C(\theta) (k_{n} + k_{n-1})^{3} \int_{t_{n-1}}^{t_{n+1}} \| u_{tt} \|_{r}^{2} dt, 
      \label{eq:consist-2nd-eq1} \\
      \Big\| \frac{u_{n,\alpha}}{\widehat{k}_{n}} - u_{t}(t_{n,\beta})\Big\|_{r}^{2}  
      \leq& C(\theta) (k_{n} + k_{n-1})^{3} \displaystyle\int_{t_{n-1}}^{t_{n+1}} \| u_{ttt} \|_{r}^{2} dt. \label{eq:consist-2nd-eq2}
      \end{align}
      When $\theta = 1$, the DLN method reduces to the midpoint rule, and we have
      \begin{align}
      \big\| u_{n,\beta} - u(t_{n,\beta}) \big\|_{r}^{2} 
      =& \Big\| \frac{u_{n+1} + u_{n}}{2} - u \big( \frac{t_{n+1} + t_{n}}{2} \big)\Big\|_{r}
      \leq C k_{n}^{3} \int_{t_{n}}^{t_{n+1}} \| u_{tt} \|_{r}^{2} dt, 
      \label{eq:consist-2nd-eq3} \\
      \Big\| \frac{u_{n,\alpha}}{\widehat{k}_{n}} - u_{t}(t_{n,\beta})\Big\|_{r}^{2}  
      =& \Big\| \frac{u_{n+1} - u_{n}}{k_{n}} - u_{t} \big( \frac{t_{n+1} + t_{n}}{2} \big) \Big\|_{r}
      \leq C k_{n}^{3} \displaystyle\int_{t_{n}}^{t_{n+1}} \| u_{ttt} \|_{r}^{2} dt. \label{eq:consist-2nd-eq4}
      \end{align}
    \end{lemma}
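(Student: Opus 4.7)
The plan is to Taylor-expand $u(\cdot,t)$ about the point $t_{n,\beta}$ with integral remainder and exploit three algebraic identities for the DLN coefficients that are precisely the second-order consistency conditions built into \eqref{eq:alphabeta}. The final estimates will then follow from the Cauchy--Schwarz inequality applied to the remainder integrals, together with the fact that $\widehat{k}_n$ is comparable to $k_n + k_{n-1}$ for $\theta \in [0,1)$.

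For \eqref{eq:consist-2nd-eq1}, the relevant identities are $\sum_{\ell=0}^{2}\beta_\ell^{(n)} = 1$ and $\sum_{\ell=0}^{2}\beta_\ell^{(n)}(t_{n-1+\ell} - t_{n,\beta}) = 0$, the latter being the very definition of $t_{n,\beta}$. Writing each $u_{n-1+\ell}$ as its first-order Taylor polynomial about $t_{n,\beta}$ plus the integral remainder $\int_{t_{n,\beta}}^{t_{n-1+\ell}}(t_{n-1+\ell}-s)\,u_{tt}(s)\,ds$ and summing against $\beta_\ell^{(n)}$, the zeroth- and first-order contributions cancel, leaving
\begin{equation*}
u_{n,\beta} - u(t_{n,\beta}) = \sum_{\ell=0}^{2} \beta_\ell^{(n)} \int_{t_{n,\beta}}^{t_{n-1+\ell}} (t_{n-1+\ell}-s)\,u_{tt}(s)\,ds .
\end{equation*}
Taking the $H^r$ norm, applying Minkowski on the three-term sum and Cauchy--Schwarz on each time integral, and using $|t_{n-1+\ell} - s| \le k_n + k_{n-1}$ yields the required bound. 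The overall constant is controlled by $\sum_\ell (\beta_\ell^{(n)})^2$, which is uniformly bounded in $\varepsilon_n \in (-1,1)$ by a function of $\theta$ only since $(1+\varepsilon_n\theta)^2 \geq (1-\theta)^2 > 0$ for $\theta \in [0,1)$.

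For \eqref{eq:consist-2nd-eq2}, the analysis needs one more Taylor order and relies on $\sum_\ell \alpha_\ell = 0$, $\sum_\ell \alpha_\ell(t_{n-1+\ell}-t_{n,\beta}) = \widehat{k}_n$, and the crucial second-order cancellation
\begin{equation*}
\sum_{\ell=0}^{2} \alpha_\ell (t_{n-1+\ell} - t_{n,\beta})^2 = 0 ,
\end{equation*}
which is equivalent to $\sum_\ell \alpha_\ell t_{n-1+\ell}^2 = 2\widehat{k}_n t_{n,\beta}$ and is exactly the design criterion that fixes $\beta_\ell^{(n)}$ in \eqref{eq:alphabeta}. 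Expanding each $u_{n-1+\ell}$ to second order with integral remainder in $u_{ttt}$ and summing against $\alpha_\ell$, these identities eliminate all terms through quadratic order and produce
\begin{equation*}
u_{n,\alpha} - \widehat{k}_n u_t(t_{n,\beta}) = \frac{1}{2}\sum_{\ell=0}^{2}\alpha_\ell \int_{t_{n,\beta}}^{t_{n-1+\ell}} (t_{n-1+\ell}-s)^2\,u_{ttt}(s)\,ds .
\end{equation*}
Dividing by $\widehat{k}_n \ge \tfrac{1-\theta}{2}(k_n + k_{n-1})$ (valid for $\theta \in [0,1)$) and applying Cauchy--Schwarz in the same way produces the estimate $C(\theta)(k_n+k_{n-1})^3 \int_{t_{n-1}}^{t_{n+1}} \|u_{ttt}\|_r^2\,dt$.

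The identities \eqref{eq:consist-2nd-eq3} and \eqref{eq:consist-2nd-eq4} are the $\theta = 1$ limit: the coefficients collapse to $\alpha = (1,-1,0)$, $\beta = (1/2, 0, 1/2)$, $\widehat{k}_n = k_n$, and $t_{n,\beta} = (t_n + t_{n+1})/2$, so the scheme becomes the midpoint rule on $[t_n, t_{n+1}]$; the $u_{n-1}$ contribution drops out and the integration window shrinks to $[t_n, t_{n+1}]$, yielding $k_n^3$ in place of $(k_n + k_{n-1})^3$. The main obstacle I anticipate is cleanly verifying the quadratic cancellation $\sum_\ell \alpha_\ell (t_{n-1+\ell} - t_{n,\beta})^2 = 0$ from the explicit $\varepsilon_n$-dependent formulas in \eqref{eq:alphabeta}; once that identity is in hand, the remaining work is a routine application of Taylor's theorem and Cauchy--Schwarz, with care only needed to track that the $C(\theta)$ constants do not degenerate as $\varepsilon_n \to \pm 1$.
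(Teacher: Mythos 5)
Your proposal is correct and follows essentially the same route as the paper's Appendix~\ref{appendixB}: Taylor expansion with integral remainder, cancellation through the DLN consistency identities (your quadratic identity $\sum_{\ell}\alpha_\ell(t_{n-1+\ell}-t_{n,\beta})^2=0$ is exactly the paper's \eqref{eq:AppendB-eq3} once the square is expanded and $\sum_\ell\alpha_\ell=0$, $\sum_\ell\alpha_\ell t_{n-1+\ell}=\widehat{k}_n$ are used), and Cauchy--Schwarz on the remainders, the only cosmetic difference being that you expand about $t_{n,\beta}$ while the paper expands about $t_{n}$. One trivial slip: at $\theta=1$ the coefficients are $(\beta_2^{(n)},\beta_1^{(n)},\beta_0^{(n)})=(\tfrac12,\tfrac12,0)$ rather than $(\tfrac12,0,\tfrac12)$, though your stated consequences ($t_{n,\beta}=(t_n+t_{n+1})/2$, the $u_{n-1}$ contribution dropping out, and the integration window shrinking to $[t_n,t_{n+1}]$) are the correct ones.
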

      %\begin{proof}
      %  The proof of this Lemma can be found at Appendix \ref{appendixB}.
      %\end{proof}
    The proof of these two Lemmas can be found at Appendix \ref{appendixA} and \ref{appendixB}.

    \section{The Modified DLN Algorithm}
    \label{sec:DLN-CSS}
    Let $u_{n}^{h} \in X_{s}^{h}$ be the numerical approximation of $u(x,t)$ in \eqref{eq:AllenCahn_Eq} at time $t_{n}$. 
    The modified DLN finite element algorithm for the Allen-Cahn equation \eqref{eq:AllenCahn_Eq} is formulated as the following: 
    given $u_{n-1}^{h},u_{n}^{h} \in X_{s}^{h}$, find $u_{n+1}^{h} \in X_{s}^{h}$ such that 
    \begin{gather}
        \Big(\frac{u_{n,\alpha}^{h}}{\widehat{k}_{n}}, v^{h} \Big)
        + \epsilon^{2} \big(\nabla u_{n,\beta}^{h}, \nabla v^{h} \big)
        + \big( \widetilde{f}(u_{n+1,\theta}^{h},u_{n,\theta}^{h}), v^{h} \big) = 0, \label{eq:DLN-CSS-Alg}
    \end{gather}
    holds for all $v^{h} \in X_{s}^{h}$, where $u_{n,\alpha}^{h}$, $u_{n,\beta}^{h}$ and $u_{n,\theta}^{h}$ are defined in \eqref{eq:nota-seq1} and 
    \begin{align*}
		&\widetilde{f}(u_{n+1,\theta}^{h},u_{n,\theta}^{h}) \\
        =& \frac{1}{4}  
        \Big\{ \big[ \big( u_{n+1,\theta}^{h} \big)^{3} + \big( u_{n+1,\theta}^{h} \big)^{2}u_{n,\theta}^{h} 
		+ u_{n+1,\theta}^{h} \big( u_{n,\theta}^{h} \big)^{2} + \big( u_{n,\theta}^{h} \big)^{3} \big] 
        - 2 \big( u_{n+1,\theta}^{h} + u_{n,\theta}^{h} \big) \Big\} \\
		=& 
		\begin{cases}
			\displaystyle \frac{F(u_{n+1,\theta}^{h}) - F(u_{n,\theta}^{h})}{u_{n+1,\theta}^{h} - u_{n,\theta}^{h}} 
			& \text{if } u_{n+1,\theta}^{h} \neq u_{n,\theta}^{h}, \\
			\displaystyle f \Big(\frac{u_{n+1,\theta}^{h} + u_{n,\theta}^{h}}{2} \Big) 
			& \text{if } u_{n+1,\theta}^{h} = u_{n,\theta}^{h}.
		\end{cases}
    \end{align*}

    \begin{remark}
        \label{remark1}
        The function $f(u)$ is approximated by $\widetilde{f}$ numerically, which has the following properties.
        \begin{itemize}
            \item[$\bullet$] $\widetilde{f} (u, v)$ is continuous and $\partial_{u} \widetilde{f} (u,v), \partial_{v} \widetilde{f} (u,v)$ exist. 
            Note that the solution $u(x,t)$ of the Allen-Cahn model \eqref{eq:AllenCahn_Eq} satisfies the maximum principle: if the initial value and boundary conditions are bounded by some constant $\Gamma$, then the solution at a later time are also bounded by $\Gamma$, i.e.,
            \begin{gather*}
                \| u(\cdot, t) \|_{\infty} < \Gamma, \qquad \forall t>0.
            \end{gather*} 
            Following the maximum principle, there exists $L > 0$ such that
            % \begin{align*}
            $\| f'(u) \|_{\infty}\leq L$,
            % \end{align*}
            therefore, $\widetilde{f} (u, v)$ is Lipschitz-continuous for both components. % by the mean-value theorem.
            %        If the initial condition is bounded, since the Allen-Cahn equation satisfies the maximum principle, we can show that $\| f' \|_{\infty} < L$ for some $L>0$, and also $\widetilde{f} (u, v)$ are Lipschitz-continuous for both components by the mean-value theorem. % \cite[p.1678]{SY10_DCDS}.    
            \item[$\bullet$] For $\theta \in [0,1)$,
            $\widetilde{f} (u_{n+1,\theta}, u_{n,\theta})$ is a first order approximation to $f(u(t_{n,\beta}))$ in time under variable time steps and a second order approximation in time under constant time steps. This can be observed from \eqref{eq:consist-1st-eq2}, \eqref{eq:consist-1st-eq2-const} and 
        %   By Taylor's expansion: 
            \begin{align}
                &\widetilde{f} (u_{n+1,\theta}, u_{n,\theta}) - f(u(t_{n,\beta})) 
                \label{eq:2nd-approx-f} \\
                =& f'(\xi_{n}) \Big( \frac{u_{n+1,\theta} + u_{n,\theta}}{2} - u(t_{n,\beta}) \Big) 
                + \frac{1}{24} f'' \big( \frac{u_{n+1,\theta} + u_{n,\theta}}{2} \big) (u_{n+1,\theta} - u_{n,\theta})^{2}, \notag
            \end{align}
            which follows from Taylor's expansion, where $\xi_{n}$ is between $u_{n}$ and $u_{n,\beta}$. 
            %From \eqref{eq:consist-1st-eq2}, \eqref{eq:consist-1st-eq2-const} and \eqref{eq:2nd-approx-f}, we have the conclusion. 
            For $\theta = 1$, $\widetilde{f} (u_{n+1,\theta}, u_{n,\theta})$ is always a second order approximation to $f(u(t_{n,\beta}))$ in time, which is implied by \eqref{eq:consist-1st-eq4} and \eqref{eq:2nd-approx-f}.
        \end{itemize}
    \end{remark}

    \begin{remark}
        The modified DLN algorithm \eqref{eq:DLN-CSS-Alg} with $\theta = 1$ reduces to the modified Crank-Nicolson algorithm for the Allen-Cahn equation studied in \cite{XLWB19_CMAME}.
    \end{remark}
        
    \begin{remark}
        \label{remark2}
        Based on the modified DLN method, we can formulate the DLN Convex Splitting scheme (DLN-CSS). As introduced in \cite{XLWB19_CMAME}, we split the potential function $F$ into the difference between two convex functions: the implicit part $F_1(u) = \frac{u^4 +1}{4}$ and the explicit part $F_2(u) = \frac{u^2}{2}$. %Clearly, \[ \frac{F(u_{n+1,\theta}^{h}) - F(u_{n,\theta}^{h})}{u_{n+1,\theta}^{h} - u_{n,\theta}^{h}} = \frac{F_1(u_{n+1,\theta}^{h}) - F_1(u_{n,\theta}^{h})}{u_{n+1,\theta}^{h} - u_{n,\theta}^{h}} - \frac{F_2(u_{n+1,\theta}^{h}) - F_2(u_{n,\theta}^{h})}{u_{n+1,\theta}^{h} - u_{n,\theta}^{h}}. \]
        Then we set
        \begin{equation}\label{eq:21}
              \widehat{f}(u_{n+1,\theta}^{h}, u_{n,\theta}^{h}) = \frac{F_1(u_{n+1,\theta}^{h,\tt{IM}}) - F_1(u_{n,\theta}^{h})}{u_{n+1,\theta}^{h,\tt{IM}} - u_{n,\theta}^{h}} - \frac{F_2(u_{n+1,\theta}^{h,\tt{EX}}) - F_2(u_{n,\theta}^{h})}{u_{n+1,\theta}^{h,\tt{EX}} - u_{n,\theta}^{h}},
        \end{equation}
        where $u_{n+1,\theta}^{h,\tt{IM}} = u_{n+1,\theta}^{h}$ and $u_{n+1,\theta}^{h,\tt{EX}} = \frac{1+\theta}{2} \Big[ \big(1 + \frac{k_{n}}{k_{n-1}} \big) u_{n}^{h}
        - \frac{k_{n}}{k_{n-1}} u_{n-1}^{h} \Big] +\frac{1-\theta}{2}u_{n}^{h}$. %{\color{red} [YX: why is this different from Eq. (6)?]} 
        The DLN-CSS scheme is obtained by replacing $\widetilde{f}(u_{n+1,\theta}^{h}, u_{n,\theta}^{h})$ in \eqref{eq:DLN-CSS-Alg} with $\widehat{f}$ in \eqref{eq:21}. 
        
        We test the DLN-CSS scheme on the 1D travelling wave example in Section \ref{sec:NumTest:1D}. The numerical results are similar to those of the modified DLN scheme, and are omitted to save space. 
    \end{remark}

    \subsection{Unconditional Stability in Model Energy}
    We define the corresponding discrete model energy for the modified DLN algorithm \eqref{eq:DLN-CSS-Alg} at time $t_{n}$ as
    \begin{align}
        \mathcal{E}_{n}^{\tt{MOD}} := \epsilon^2 
		\begin{Vmatrix}
			\nabla u_{n}^{h} \\
			\nabla u_{n-1}^{h}
		\end{Vmatrix}_{G(\theta)}^{2} + \int_{\Omega} F(u_{n,\theta}^{h}) dx.
		\label{eq:EN-DLN-CSS-Alg}
    \end{align} 
    The following theorem presents the discrete energy dissipation law satisfied by the numerical solution.
    \begin{theorem}
        The model energy of the variable time-stepping modified DLN algorithm \eqref{eq:DLN-CSS-Alg} satisfies the discrete energy dissipation law:
        \begin{gather}
            \mathcal{E}_{n+1}^{\tt{MOD}} \leq \mathcal{E}_{n}^{\tt{MOD}}, \qquad n = 1,2, \cdots, N-1,
            \label{eq:EN-law-DLN-CSS}
        \end{gather}
    thus the modified DLN algorithm \eqref{eq:DLN-CSS-Alg} is unconditional stable in model energy.
    \end{theorem}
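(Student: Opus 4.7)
The plan is to take $v^{h} = u_{n,\alpha}^{h}$ as the test function in \eqref{eq:DLN-CSS-Alg}. The driving observation is the algebraic identity
\[
  u_{n,\alpha}^{h} \;=\; u_{n+1,\theta}^{h} - u_{n,\theta}^{h},
\]
which follows from the definitions in \eqref{eq:nota-seq1} after inserting $\alpha_{2} = (1+\theta)/2$, $\alpha_{1} = -\theta$, $\alpha_{0} = (\theta-1)/2$ and grouping terms. Together with the construction of $\widetilde{f}$ as a divided difference of $F$, this yields the pointwise discrete chain rule
\[
  \widetilde{f}(u_{n+1,\theta}^{h}, u_{n,\theta}^{h})\,\bigl(u_{n+1,\theta}^{h} - u_{n,\theta}^{h}\bigr) \;=\; F(u_{n+1,\theta}^{h}) - F(u_{n,\theta}^{h}),
\]
valid in both branches of the definition of $\widetilde{f}$ (including the limit $u_{n+1,\theta}^{h} = u_{n,\theta}^{h}$, where both sides vanish by continuity).

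With this test function, the three terms of \eqref{eq:DLN-CSS-Alg} are handled separately. The discrete time-derivative term produces $\widehat{k}_{n}^{-1}\|u_{n,\alpha}^{h}\|^{2} \geq 0$, since $\widehat{k}_{n} = \tfrac{1+\theta}{2}k_{n} + \tfrac{1-\theta}{2}k_{n-1} > 0$ for every $\theta \in [0,1]$ and every positive step sequence. For the diffusion term, Lemma \ref{lemma:Gstab} applied to the sequence $\{\nabla u_{m}^{h}\}$ gives
\[
  \epsilon^{2} \bigl(\nabla u_{n,\beta}^{h},\, \nabla u_{n,\alpha}^{h}\bigr)
  = \epsilon^{2} \begin{Vmatrix}\nabla u_{n+1}^{h} \\ \nabla u_{n}^{h}\end{Vmatrix}_{G(\theta)}^{2}
  - \epsilon^{2} \begin{Vmatrix}\nabla u_{n}^{h} \\ \nabla u_{n-1}^{h}\end{Vmatrix}_{G(\theta)}^{2}
  + \epsilon^{2} \Big\|\sum_{\ell=0}^{2} \gamma_{\ell}^{(n)} \nabla u_{n-1+\ell}^{h}\Big\|^{2}.
\]
Finally, by the discrete chain rule above, the non-linear contribution integrates to $\int_{\Omega} [F(u_{n+1,\theta}^{h}) - F(u_{n,\theta}^{h})]\, dx$, which is exactly the difference of the potential parts of $\mathcal{E}_{n+1}^{\tt{MOD}}$ and $\mathcal{E}_{n}^{\tt{MOD}}$.

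Summing the three contributions and reorganizing using the energy definition \eqref{eq:EN-DLN-CSS-Alg} gives
\[
  \mathcal{E}_{n+1}^{\tt{MOD}} - \mathcal{E}_{n}^{\tt{MOD}}
  \;=\; -\frac{1}{\widehat{k}_{n}}\|u_{n,\alpha}^{h}\|^{2}
  - \epsilon^{2} \Big\|\sum_{\ell=0}^{2} \gamma_{\ell}^{(n)} \nabla u_{n-1+\ell}^{h}\Big\|^{2} \;\leq\; 0,
\]
which is the desired discrete dissipation law \eqref{eq:EN-law-DLN-CSS}. There is no substantive analytical obstacle; the proof is a discrete chain rule. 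The step that must be checked with care is the pair of identities above, namely $u_{n,\alpha}^{h} = u_{n+1,\theta}^{h} - u_{n,\theta}^{h}$ together with the divided-difference product formula for $\widetilde{f}$, because they are precisely what converts the unconditional $G$-stability of the linear part (Lemma \ref{lemma:Gstab}) into unconditional monotone decay of $\mathcal{E}_{n}^{\tt{MOD}}$ under arbitrary time-step sequences, with no step-ratio restriction.
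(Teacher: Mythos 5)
Your proposal is correct and follows exactly the paper's argument: test with $v^{h}=u_{n,\alpha}^{h}=u_{n+1,\theta}^{h}-u_{n,\theta}^{h}$, apply the $G$-stability identity of Lemma \ref{lemma:Gstab} to the gradient term, and use the divided-difference (discrete chain rule) property of $\widetilde{f}$ to telescope the potential term. No differences worth noting.
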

    \begin{proof}
        Setting $v^h = u_{n,\alpha}^{h} \big(= u_{n+1,\theta}^{h} - u_{n,\theta}^{h} \big)$ in \eqref{eq:DLN-CSS-Alg} leads to 
        \begin{gather}
          \frac{1}{\widehat{k}_{n}} \| u_{n,\alpha}^{h} \|^{2}
          + \epsilon^2 
          \begin{Vmatrix}
            \nabla u_{n+1}^{h} \\
            \nabla u_{n}^{h}
          \end{Vmatrix}_{G(\theta)}^{2}
          - \epsilon^2 
          \begin{Vmatrix}
            \nabla u_{n}^{h} \\
            \nabla u_{n-1}^{h}
          \end{Vmatrix}_{G(\theta)}^{2}
          + \epsilon^{2}  \Big\| \nabla \Big(\sum_{\ell \!=\!0}^{2}{\gamma_{\ell }^{(n)}}u_{n-1+\ell}^{h} \Big) \Big\|^{2} 
          \notag \\
          + \big( \widetilde{f}(u_{n+1,\theta}^{h},u_{n,\theta}^{h}), u_{n+1,\theta}^{h} - u_{n,\theta}^{h}  \big) = 0, \label{eq:Stab-CSS-eq1} 
        \end{gather}
        following the $G$-stability identity \eqref{eq:G-stab}.
        Then we use the fact
        \begin{gather*}
          \widetilde{f}(u_{n+1,\theta}^{h},u_{n,\theta}^{h}) \big( u_{n+1,\theta}^{h} - u_{n,\theta}^{h} \big) 
          = F(u_{n+1,\theta}^{h}) - F(u_{n,\theta}^{h}),
        \end{gather*}
        to obtain
        \begin{gather*}
          \mathcal{E}_{n+1}^{\tt{MOD}} - \mathcal{E}_{n}^{\tt{MOD}} + \frac{1}{\widehat{k}_{n}} \| u_{n,\alpha}^{h} \|^{2}
          + \epsilon^{2}  \Big\| \nabla \Big(\sum_{\ell \!=\!0}^{2}{\gamma_{\ell }^{(n)}}u_{n-1+\ell}^{h} \Big) \Big\|^{2} = 0, 
        \end{gather*}
        which implies \eqref{eq:EN-law-DLN-CSS}.
    \end{proof}

    \subsection{Error Analysis} 
    This subsection is devoted to the error estimate of the fully discrete modified DLN algorithm \eqref{eq:DLN-CSS-Alg}. 
    We remind that $\{ \beta_{\ell}^{(n)} \}_{\ell=0}^{2}$ are uniformly bounded for any fixed $\theta \in [0,1]$, i.e. 
    $|\beta_{\ell}^{(n)}| < C_{\beta}(\theta)$ for all $\ell,n$, and define the following discrete Bochner function space for error analysis 
	\begin{align}
		&\ell_{\theta}^{2} (0,N;H^{r}(\Omega)) 
    \label{eq:def-discrete-norm} \\
    &:=
		\Big\{ \! u(\cdot,t) \in H^{r}(\Omega) \!:\! \| u \|_{\ell_{\theta}^{2} (0,N,H^{r}(\Omega))} 
		\!=\! \Big( \sum_{n\!=\!0}^{N\!-\!1} \! k_{n} \| u(t_{n,\theta}) \|_{r}^{2} \Big)^{\frac{1}{2}} \!<\! \infty \! \Big\}. \notag 
	\end{align}

    \begin{theorem} \label{theorem2}
        %Given time grids $\{t_{n} \}_{n=0}^{N}$ on time interval $[0,T]$,
        Let $u_{n}^{h} \in X_{s}^{h}$ denote numerical solutions of the modified DLN algorithm in \eqref{eq:DLN-CSS-Alg} and $u_{n}$ denote exact solutions of the Allen-Cahn equation in \eqref{eq:AllenCahn_Eq} at time $t_{n}$. We assume 
        \begin{gather*}
            u(\cdot, t) \in H^{s+1}(\Omega), \qquad \forall t \in [0,T], \\
            u \in \ell_{\theta}^{2}(0,N;H^{s+1}(\Omega)), \quad
            u_{t} \in L^{2}(0,T;H^{s+1}(\Omega)) \cap L^{4} (0,T;L^{4}(\Omega)), \\
            u_{tt} \in L^{2}(0,T;H^{1}(\Omega)), \quad
            u_{ttt} \in L^{2}(0,T;L^{2}(\Omega)),
        \end{gather*} 
        there exists $L>0$ such that $\|f'(u) \|_{\infty} < L$ for all $t \in [0,T]$,
        and time step sizes and time step ratios satisfy 
        \begin{gather}
            k_{\rm{max}} = \max_{0 \leq n \leq N-1} \{k_{n} \} 
            < \frac{1}{12\sqrt{3} C_{\beta}(\theta) L}, 
            \label{eq:time-max-condi} \\
            C_{\ell} < \frac{k_{n}}{k_{n-1}} < C_{u}, \qquad n = 1, 2, \cdots, N-1,
            \label{eq:time-ratio-condi}
        \end{gather}
        for some $C_{\ell},C_{u}>0$. 
        Then we have: for $\theta \in [0,1)$
        \begin{gather}
            \max_{0 \leq n \leq N} \{ \| u_{n}^{h} - u_{n} \| \} 
            \leq \mathcal{O} (k_{\rm{max}}, h^{s+1}), \label{eq:conclusion-vari} \\
            \Big( \frac{\epsilon^{2}}{2} \sum_{n=1}^{N-1} \widehat{k}_{n} 
            \| \nabla ( u_{n,\beta}^{h} - u_{n,\beta} )\|^{2} \Big)^{1/2}
            \leq  \mathcal{O} (k_{\rm{max}}, h^{s}), \notag 
        \end{gather} 
        and for $\theta = 1$
        \begin{gather}
            \max_{0 \leq n \leq N} \{ \| u_{n}^{h} - u_{n} \| \} 
            \leq \mathcal{O} (k_{\rm{max}}^{2}, h^{s+1}), \label{eq:conclusion-vari-theta1} \\
            \Big( \frac{\epsilon^{2}}{2} \sum_{n=1}^{N-1} \widehat{k}_{n} 
            \| \nabla (u_{n,\beta} - u_{n,\beta}^{h} )\|^{2} \Big)^{1/2} \leq  \mathcal{O} (k_{\rm{max}}^{2}, h^{s}). \notag 
        \end{gather} 
        Under uniform time grids with constant time step $k$, \eqref{eq:conclusion-vari} becomes
        \begin{gather}
            \max_{0 \leq n \leq N} \{ \| u_{n}^{h} - u_{n} \| \} \leq \mathcal{O} (k^{2}, h^{s+1}),
            \label{eq:conclusion-const} \\
            \Big( \frac{\epsilon^{2}}{2} \sum_{n=1}^{N-1} k
            \| \nabla (u_{n,\beta} - u_{n,\beta}^{h} )\|^{2} \Big)^{1/2} \leq \mathcal{O} (k^{2}, h^{s}). \notag 
        \end{gather}
    \end{theorem}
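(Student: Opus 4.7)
The plan is a standard energy/Ritz-projection argument tailored to the DLN two-step framework. I would let $P_h:H^1(\Omega)\to X_s^h$ denote the elliptic Ritz projection associated with $(\nabla\cdot,\nabla\cdot)$ and decompose the error as $u_n^h-u_n=\phi_n^h-\eta_n$ with $\phi_n^h:=u_n^h-P_h u_n\in X_s^h$ and $\eta_n:=u_n-P_h u_n$. Standard approximation theory then furnishes $\|\eta_n\|+h\|\nabla\eta_n\|\leq Ch^{s+1}\|u_n\|_{s+1}$ and analogous bounds for $\eta_{n,\theta}$, $\eta_{n,\beta}$, and $\eta_{n,\alpha}/\widehat k_n$. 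Evaluating the continuous Allen--Cahn equation at $t=t_{n,\beta}$, testing against $v^h\in X_s^h$, and then subtracting the scheme \eqref{eq:DLN-CSS-Alg} produces an error equation for $\phi_n^h$ whose right-hand side collects (i) projection contributions in $\eta$, (ii) three time-consistency residuals arising from Lemmas \ref{lemma:consistency-1st}--\ref{lemma:consistency-2nd}, and (iii) the nonlinear discrepancy $\widetilde f(u_{n+1,\theta}^h,u_{n,\theta}^h)-f(u(t_{n,\beta}))$ split via the identity \eqref{eq:2nd-approx-f}.

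The second step is to test the error equation with $v^h=\phi_{n,\alpha}^h$ and invoke the $G$-stability identity of Lemma \ref{lemma:Gstab} on the $H^1$ bilinear form. This produces the telescoping $G$-norm structure $\|[\nabla\phi_{n+1}^h;\nabla\phi_n^h]\|_{G(\theta)}^2-\|[\nabla\phi_n^h;\nabla\phi_{n-1}^h]\|_{G(\theta)}^2$, the nonnegative penalty $\epsilon^2\|\nabla\sum_\ell \gamma_\ell^{(n)}\phi_{n-1+\ell}^h\|^2$, and the time-derivative contribution $\|\phi_{n,\alpha}^h\|^2/\widehat k_n$ on the left. The $\nabla\eta_{n,\beta}$ contribution vanishes by definition of $P_h$; the $\eta_{n,\alpha}/\widehat k_n$ term is handled by Cauchy--Schwarz together with the $L^2$-time-integral bound on $\eta_t$; and the time-consistency residuals are controlled by Lemmas \ref{lemma:consistency-1st}--\ref{lemma:consistency-2nd}, giving (for $\theta\in[0,1)$) contributions of order $(k_n+k_{n-1})^3$ times time integrals of $\|u_{tt}\|_1^2$, $\|u_{ttt}\|^2$, and $\|u_t\|_{L^4}^4$, and (for $\theta=1$) the sharper midpoint contributions of order $k_n^3$ coming from \eqref{eq:consist-1st-eq3}--\eqref{eq:consist-2nd-eq4}.

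The nonlinear discrepancy is split via \eqref{eq:2nd-approx-f} into a linear piece whose first-order factor is controlled by \eqref{eq:consist-1st-eq2} (or \eqref{eq:consist-1st-eq2-const} on uniform grids, or \eqref{eq:consist-1st-eq4} for $\theta=1$) and a quadratic piece controlled by \eqref{eq:consist-1st-eq1} or \eqref{eq:consist-1st-eq3}. The discrete-to-continuous difference $\widetilde f(u_{n+1,\theta}^h,u_{n,\theta}^h)-\widetilde f(u_{n+1,\theta},u_{n,\theta})$ is estimated by the Lipschitz property of $\widetilde f$ from Remark \ref{remark1}, yielding a bound proportional to $L(\|\phi_{n+1,\theta}^h\|+\|\phi_{n,\theta}^h\|+\|\eta_{n+1,\theta}\|+\|\eta_{n,\theta}\|)$. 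After Cauchy--Schwarz and Young's inequality against $\phi_{n,\alpha}^h$, and after using $\alpha_2\phi_{n+1}^h=\phi_{n,\alpha}^h-\alpha_1\phi_n^h-\alpha_0\phi_{n-1}^h$ to express $\phi_{n+1}^h$ in terms of $\phi_{n,\alpha}^h$ and already-controlled quantities, the dangerous $\phi_{n+1}^h$-dependent contribution collapses into a multiple of $L^2\widehat k_n^2\cdot\|\phi_{n,\alpha}^h\|^2/\widehat k_n$; the quantitative assumption \eqref{eq:time-max-condi} is exactly what is needed to absorb this into the $\|\phi_{n,\alpha}^h\|^2/\widehat k_n$ term on the left. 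Summing $n$ from $1$ to $N-1$, using the ratio bound \eqref{eq:time-ratio-condi} to ensure $\widehat k_n\asymp k_n$, and applying the discrete Gronwall inequality produces the $\ell^\infty(L^2)\cap\ell^2(H^1)$ bound on $\phi_n^h$; a triangle inequality with the Ritz estimates then yields \eqref{eq:conclusion-vari}--\eqref{eq:conclusion-const}.

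The principal technical obstacle is the absorption of the implicit nonlinearity: because $\widetilde f$ depends on $u_{n+1,\theta}^h$, the Lipschitz estimate unavoidably generates a factor of $\|\phi_{n+1}^h\|$ on the right, and this factor can only be paid for by the left-hand-side dissipation under a quantitative step restriction, which explains the explicit constant $12\sqrt3\,C_\beta(\theta)L$ in \eqref{eq:time-max-condi}. A secondary subtlety is that the Lipschitz constant of $\widetilde f$ in Remark \ref{remark1} formally requires $L^\infty$ control of the iterates, which is usually secured by a bootstrap on a ball surrounding the exact solution given the maximum principle for \eqref{eq:AllenCahn_Eq}. Once these hurdles are cleared, the $\theta=1$ and uniform-grid improvements are automatic: the sharper consistency estimates \eqref{eq:consist-1st-eq3}--\eqref{eq:consist-2nd-eq4} and \eqref{eq:consist-1st-eq2-const} upgrade every first-order source on the right-hand side to second-order at no extra cost, delivering \eqref{eq:conclusion-vari-theta1} and \eqref{eq:conclusion-const} without altering the structure of the argument.
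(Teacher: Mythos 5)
Your overall architecture --- Ritz projection splitting, error equation at $t_{n,\beta}$, $G$-stability, the consistency Lemmas \ref{lemma:consistency-1st}--\ref{lemma:consistency-2nd}, Lipschitz control of $\widetilde{f}$, discrete Gr\"onwall --- matches the paper's, and you correctly identify both the role of the step restriction \eqref{eq:time-max-condi} and the reason the rate improves for $\theta=1$ and for uniform grids. The genuine gap is your choice of test function. The paper tests the error equation with $v^{h}=\phi_{n,\beta}^{h}$, not $\phi_{n,\alpha}^{h}$: the pairing $(\phi_{n,\alpha}^{h}/\widehat{k}_{n},\,\phi_{n,\beta}^{h})$ triggers Lemma \ref{lemma:Gstab} in the $L^{2}$ norm, producing a telescoping $G$-norm in $\|\phi_{n}^{h}\|$ (which is what the $\ell^{\infty}(L^{2})$ conclusion \eqref{eq:conclusion-vari} requires), while the diffusion term contributes $\epsilon^{2}\|\nabla\phi_{n,\beta}^{h}\|^{2}$ as summable dissipation on the left. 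That dissipation is used twice: it is exactly the quantity appearing in the stated $\ell^{2}(H^{1})$ bound, and it absorbs the elliptic consistency residual $\epsilon^{2}\big(\nabla(u_{n,\beta}-u(t_{n,\beta})),\nabla\phi_{n,\beta}^{h}\big)$ by Young's inequality at cost $C(\theta)\epsilon^{2}k_{\rm{max}}^{3}\int\|\nabla u_{tt}\|^{2}dt$ per step, i.e.\ at second order.

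With $v^{h}=\phi_{n,\alpha}^{h}$ neither mechanism is available: the $G$-identity telescopes only in $\|\nabla\phi_{n}^{h}\|$, and the sole dissipation on the left is $\|\phi_{n,\alpha}^{h}\|^{2}/\widehat{k}_{n}$. Consequently (i) the residual $\epsilon^{2}\big(\nabla(u_{n,\beta}-u(t_{n,\beta})),\nabla\phi_{n,\alpha}^{h}\big)$ must be paid for by the Gr\"onwall terms $\widehat{k}_{n}\|\nabla\phi_{n\pm1}^{h}\|^{2}$, which forces a factor $\widehat{k}_{n}^{-1}\|\nabla(u_{n,\beta}-u(t_{n,\beta}))\|^{2}\sim k_{\rm{max}}^{2}\int\|\nabla u_{tt}\|^{2}dt$ and caps the rate at first order \emph{even for} $\theta=1$ and uniform grids, losing \eqref{eq:conclusion-vari-theta1} and \eqref{eq:conclusion-const} (integrating back by parts to pair with $\|\phi_{n,\alpha}^{h}\|^{2}/\widehat{k}_{n}$ would require $u_{tt}\in L^{2}(0,T;H^{2})$, which is not assumed); and (ii) the Lipschitz bound on $\widetilde{f}$ produces Gr\"onwall terms in $\|\phi_{n\pm1}^{h}\|^{2}$, but your left-hand side controls only gradient norms and $\sum\|\phi_{n,\alpha}^{h}\|^{2}/\widehat{k}_{n}$; since the Allen--Cahn flow does not conserve the mean, no Poincar\'e inequality bridges the two, and recovering $\|\phi_{n}^{h}\|$ from the $\alpha$-differences inside the Gr\"onwall loop is a nontrivial step you do not supply. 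Switching the test function to $\phi_{n,\beta}^{h}$ repairs both issues and yields essentially the paper's proof; the remaining estimates you describe (the splitting of $\widetilde{f}$ via \eqref{eq:2nd-approx-f}, the consistency bounds, and the optimization over the Young parameter that produces the constant $12\sqrt{3}\,C_{\beta}(\theta)L$ in \eqref{eq:time-max-condi}) then go through as you outline.
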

    \begin{proof}
        We start by defining the following Ritz projection operator $\Pi$: given $v \in H^{1}(\Omega)$, $\Pi (v) \in X_{s}^{h}$ satisfies
        \begin{gather} \label{projection}
		    \big( \nabla \Pi (v) , \nabla w^{h} \big) = (\nabla v, \nabla w^{h}), \qquad
		    \big( \Pi (v) - v, 1 \big) = 0, \qquad \forall w^{h} \in X_{s}^{h}.
        \end{gather}
        The following approximation theorem \cite{Cia02_SIAM,Tho06_Springer} holds for the Ritz projection: for any $v \in H^{r}(\Omega)$ and its Ritz projection $\Pi (v) \in X_{s}^{h}$, we have
        \begin{gather}
		    \| v - \Pi (v) \| + h \| v - \Pi (v) \|_{1} \leq Ch^{\min\{r,s+1 \}} \| v \|_{\min\{r,s+1 \}}.
		    \label{eq:ApproxProjector}
        \end{gather}

        We define the error of $u$ at time $t_{n}$ to be 
		\begin{gather*}
			e_{n} := u_{n} - u_{n}^{h} = \eta_{n} + \phi_{n}^{h}, \qquad \eta_{n} := u_{n} - \Pi (u_{n}), \qquad \phi_{n}^{h} : = \Pi (u_{n}) - u_{n}^{h}.
		\end{gather*}
%    with $\Pi$ being the projection defined in \eqref{projection}.
        The exact solution $u$ at $t_{n,\beta}$ satisfies 
		\begin{gather}
			\big( u_{t} (t_{n,\beta}), v^{h} \big) + \epsilon^{2} \big( \nabla u(t_{n,\beta}), \nabla v^{h} \big)
			+ \big( f(u(t_{n,\beta})), v^{h} \big) = 0, \quad v^{h} \in X^{h}. \label{eq:exact}
		\end{gather} 
        Subtracting \eqref{eq:DLN-CSS-Alg} from \eqref{eq:exact} yields
        \begin{align}
			\Big(\frac{u_{n,\alpha}^h}{\widehat{k}_{n}} - u_{t} (t_{n,\beta}), v^{h} \Big) 			
			&+ \epsilon^{2} \Big( \nabla \big(u_{n,\beta}^h - u(t_{n,\beta}) \big), \nabla v^{h} \Big)  \notag \\
			&+ \big(\widetilde{f}(u_{n+1,\theta}^{h},u_{n,\theta}^{h}) - f(u(t_{n,\beta})), v^{h} \big) \notag \\
            = \Big(\frac{u_{n,\alpha}-e_{n,\alpha}}{\widehat{k}_{n}} - u_{t} (t_{n,\beta}), v^{h} \Big) 			
			&+ \epsilon^{2} \Big( \nabla \big(u_{n,\beta} - e_{n,\beta} - u(t_{n,\beta}) \big), \nabla v^{h} \Big)  
            \label{eq:DLN-CSS-error-eq1_new} \\
			&+ \big(\widetilde{f}(u_{n+1,\theta}^{h},u_{n,\theta}^{h}) - f(u(t_{n,\beta})), v^{h} \big)= 0. 
            \notag 
		\end{align}
        The definition of $\Pi$ leads to $\big( \nabla \eta_{n,\beta}, \nabla v^{h} \big)=0$, and \eqref{eq:DLN-CSS-error-eq1_new} can be rewritten as
		\begin{align}
			& \Big( \frac{\phi_{n,\alpha}^{h}}{\widehat{k}_{n}}, v^{h} \Big)
			+ \epsilon^{2} \big( \nabla \phi_{n,\beta}^{h}, \nabla v^{h} \big) \label{eq:DLN-CSS-error-eq1} \\
			=& \Big(\frac{u_{n,\alpha}}{\widehat{k}_{n}} - u_{t} (t_{n,\beta}), v^{h} \Big) 
			- \Big( \frac{\eta_{n,\alpha}}{\widehat{k}_{n}}, v^{h} \Big) 
			+ \epsilon^{2} \Big( \nabla \big(u_{n,\beta} - u(t_{n,\beta}) \big), \nabla v^{h} \Big) \notag \\
			&+ \Big(\widetilde{f}(u_{n+1,\theta}^{h},u_{n,\theta}^{h}) - f(u(t_{n,\beta})), v^{h} \Big). \notag
		\end{align}
        We set $v^{h} = \phi_{n,\beta}^{h}$ in \eqref{eq:DLN-CSS-error-eq1} and use the $G$-stability identity in \eqref{eq:G-stab} to obtain  
		\begin{align}
			& \frac{1}{\widehat{k}_{n}}\Big(
			\begin{Vmatrix}
				{\phi_{n\!+\!1}^{h}} \\
				{\phi_{n}^{h}}
			\end{Vmatrix}_{G(\!\theta \!)}^{2} \!-\!
			\begin{Vmatrix}
				{\phi_{n}^{h}} \\
				{\phi_{n\!-\!1}^{h}}
			\end{Vmatrix}_{G(\!\theta \!)}^{2} 
			\!+\! \Big\|\!\sum_{\ell \!=\!0}^{2}{\gamma_{\ell }^{(n)}} \phi_{n\!-\!1\!+\!\ell}^{h} \!\Big\|^{2} \Big)
			+ \epsilon^{2} \| \nabla \phi_{n,\beta}^{h} \|^{2} 
			\label{eq:DLN-CSS-error-eq2} \\
			=& \Big(\frac{u_{n,\alpha}}{\widehat{k}_{n}} - u_{t} (t_{n,\beta}), \phi_{n,\beta}^{h} \Big)
			- \Big( \frac{\eta_{n,\alpha}}{\widehat{k}_{n}}, \phi_{n,\beta}^{h} \Big) 
			+ \epsilon^{2} \Big( \nabla \big(u_{n,\beta} - u(t_{n,\beta}) \big), \nabla \phi_{n,\beta}^{h} \Big) \notag \\
			&+ \Big(\widetilde{f}(u_{n+1,\theta}^{h},u_{n,\theta}^{h}) - f(u(t_{n,\beta})), \phi_{n,\beta}^{h} \Big). \notag 
		\end{align}

        Next, we provide a bound for each term on the right-hand side of \eqref{eq:DLN-CSS-error-eq2}. Utilizing \eqref{eq:consist-2nd-eq2}, Young's inequality and the fact $|\beta_{\ell}^{(n)}| < C_{\beta}(\theta)$ ($\ell = 0,1,2$) leads to
		\begin{align}
			&\Big(\frac{u_{n,\alpha}}{\widehat{k}_{n}} - u_{t} (t_{n,\beta}), \phi_{n,\beta}^{h} \Big) \label{eq:DLN-CSS-Error-term1} \\
			% \leq& C(\theta) \Big\| \frac{u_{n,\alpha}}{\widehat{k}_{n}} - u_{t} (t_{n,\beta}) \Big\| 
			% \big( \| \phi_{n+1}^{h} \|^{2} + \| \phi_{n}^{h} \|^{2} + \| \phi_{n-1}^{h} \|^{2} \big)^{1/2}
			% \notag \\
			\leq& C(\theta,\delta,L) \Big\| \frac{u_{n,\alpha}}{\widehat{k}_{n}} - u_{t} (t_{n,\beta}) \Big\|^{2} + \frac{\delta L^{2} (1+\theta)}{3} \big( \| \phi_{n+1}^{h} \|^{2} + \| \phi_{n}^{h} \|^{2} + \| \phi_{n-1}^{h} \|^{2}\big)
			\notag \\
			\leq& C(\theta,\delta,L) k_{\rm{max}}^{3} \int_{t_{n-1}}^{t_{n+1}} \| u_{ttt} \|^{2} dt
			+ \frac{\delta L^{2} (1+\theta)}{3} \big( \| \phi_{n+1}^{h} \|^{2} + \| \phi_{n}^{h} \|^{2} + \| \phi_{n-1}^{h} \|^{2}\big),	\notag 
		\end{align}
		where $\delta > 0$ will be decided later and $C(\theta,\delta,L)$ is a constant only depending on $\theta,\delta,L$.
        By Cauchy-Schwarz inequality, Young's inequality and the projection error \eqref{eq:ApproxProjector}
		\begin{align}
			\frac{1}{\widehat{k}_{n}} \big( \eta_{n,\alpha}, \phi_{n,\beta}^{h}  \big) 
			% \label{eq:DLN-CSS-Error-term2-eq1} \\
			\!\leq& \! \frac{C(\theta,\delta,L)}{\widehat{k}_{n}^{2}} \! \big\| \eta_{n,\alpha} \big\|^{2} 
			\!+\! \frac{\delta L^{2} (1+\theta) }{3} \big( \| \phi_{n+1}^{h} \|^{2} + \| \phi_{n}^{h} \|^{2} + \| \phi_{n-1}^{h} \|^{2}\big)
			\notag  \\
			\leq& \!\frac{C\!(\theta,\!\delta,\!L) h^{2s\!+\!2}}{\widehat{k}_{n}^{2}} 
			\!\big\| u_{n,\alpha} \big\|_{s\!+\!1}^{2} 
			\!+\! \frac{\delta L^{2} (1\!+\!\theta) }{3} \! \big( \| \phi_{n\!+\!1}^{h} \|^{2} \!+\! \| \phi_{n}^{h} \|^{2} \!+\! \| \phi_{n\!-\!1}^{h} \|^{2}\big)\notag \\
			\leq& \! \frac{C\!(\theta,\delta,L) h^{2s\!+\!2}}{\widehat{k}_{n}^{2}} \!\! \big( \! \| u_{n\!+\!1} \!-\! u_{n} \|_{s\!+\!1}^{2} \!+\! \| u_{n} \!-\! u_{n\!-\!1} \|_{s\!+\!1}^{2} \! \big) 
      		\notag \\
			&+\! \frac{\delta L^{2} (1\!+\!\theta) }{3} \big( \| \phi_{n\!+\!1}^{h} \|^{2} \!+\! \| \phi_{n}^{h} \|^{2} \!+\! \| \phi_{n\!-\!1}^{h} \|^{2}\big). 
            \label{eq:DLN-CSS-Error-term2-eq1}
		\end{align}
        By H$\rm{\ddot{o}}$lder's inequality, we have
		\begin{align}
			\| u_{n+1} - u_{n} \|_{s\!+\!1}^{2} 
			&\leq \Big\| \int_{t_{n}}^{t_{n+1}} u_{t}(\cdot,t) dt \Big\|_{s+1}^{2}
			\leq k_{n} \int_{t_{n}}^{t_{n+1}} \| u_{t} \|_{s+1}^{2} dt, \label{eq:DLN-CSS-Error-term2-eq2} 
		\end{align}
		therefore, \eqref{eq:DLN-CSS-Error-term2-eq1} becomes 
		\begin{align}
			\frac{1}{\widehat{k}_{n}} \!\big( \eta_{n,\alpha}, \phi_{n,\beta}^{h} \big)
			\leq& \frac{C(\delta,\theta,L) h^{2s\!+\!2}}{\widehat{k}_{n}} \! \int_{t_{n-1}}^{t_{n+1}} \| u_{t} \|_{s+1}^{2} dt \label{eq:DLN-CSS-Error-term2} \\
			& + \frac{\delta L^{2} (1+\theta)}{3} \big( \| \phi_{n+1}^{h} \|^{2} + \| \phi_{n}^{h} \|^{2} + \| \phi_{n-1}^{h} \|^{2}\big).  \notag 
		\end{align}
        By Cauchy-Schwarz inequality and \eqref{eq:consist-2nd-eq1}
		\begin{align}
			\epsilon^{2} \Big( \nabla \big(u_{n,\beta} - u(t_{n,\beta}) \big), \nabla \phi_{n,\beta}^{h} \Big)
			\leq& \epsilon^{2} \Big\| \nabla \big(u_{n,\beta} - u(t_{n,\beta}) \big) \Big\| 
			\| \nabla \phi_{n,\beta}^{h} \| \label{eq:DLN-CSS-Error-term3} \\
			\leq& C(\theta) \epsilon^{2} k_{\rm{max}}^{3} \int_{t_{n-1}}^{t_{n+1}} \| \nabla u_{tt} \|^{2} dt
			+ \frac{\epsilon^{2}}{2} \| \nabla \phi_{n,\beta}^{h} \|^{2}. \notag 
		\end{align}

        For the non-linear term, utilizing Cauchy Schwarz inequality, triangle inequality and Young's inequality leads to
	    \begin{align}
		    &\Big(\widetilde{f}(u_{n+1,\theta}^{h},u_{n,\theta}^{h}) - f(u(t_{n,\beta})), \phi_{n,\beta}^{h} \Big) 
		    \label{eq:DLN-CSS-error-nonlinear}
		    \\
		    \leq& C_{\beta}(\theta) \big\| \widetilde{f}(u_{n+1,\theta}^{h},u_{n,\theta}^{h}) - f(u(t_{n,\beta})) \big\| 
		    \big( \| \phi_{n+1}^{h} \| + \| \phi_{n}^{h} \| + \| \phi_{n-1}^{h} \| \big) \notag \\
		    \leq& \frac{9 C_{\beta}^{2}(\theta)}{4 \delta L^{2} (1+\theta)} 	
		    \big\| \widetilde{f}(u_{n+1,\theta}^{h},u_{n,\theta}^{h}) - f(u(t_{n,\beta})) \big\|^{2} \notag \\
		    &\qquad + \frac{\delta L^{2}(1+\theta)}{9} \big( \| \phi_{n+1}^{h} \| + \| \phi_{n}^{h} \| + \| \phi_{n-1}^{h} \| \big)^{2} \notag \\
		    \leq& \frac{9 C_{\beta}^{2}(\theta)}{4 \delta L^{2} (1+\theta)} 	
		    \big\| \widetilde{f}(u_{n+1,\theta}^{h},u_{n,\theta}^{h}) - f(u(t_{n,\beta})) \big\|^{2} \notag \\
		    &\qquad + \frac{\delta L^{2}(1+\theta)}{3} \big( \| \phi_{n+1}^{h} \|^{2} + \| \phi_{n}^{h} \|^{2} + \| \phi_{n-1}^{h} \|^{2} \big). \notag
	    \end{align}
        Considering the first term on the right-hand side, we have 
	    \begin{align}
		    &\big\| \widetilde{f}(u_{n+1,\theta}^{h},u_{n,\theta}^{h}) - f(u(t_{n,\beta})) \big\|^{2} 
            \leq  3\big\| \widetilde{f}(u_{n+1,\theta}^{h},u_{n,\theta}^{h}) 
		    - \widetilde{f}(u_{n+1,\theta},u_{n,\theta}^{h}) \big\|^{2}  \\
		    &+ 3\big\| \widetilde{f}(u_{n+1,\theta},u_{n,\theta}^{h}) 
		    - \widetilde{f}(u_{n+1,\theta},u_{n,\theta}) \big\|^{2}  
		    + 3\big\| \widetilde{f}(u_{n+1,\theta},u_{n,\theta}) - f(u(t_{n,\beta})) \big\|^{2} . \notag
	    \end{align}
        By the assumption $\| f'(u) \|_{\infty} < L$, $\widetilde{f}(\cdot,\cdot)$ are Lipschitz-continuous for both components \cite{SY10_DCDS}. Thus  
        \begin{align}
			&\big\| \widetilde{f}(u_{n\!+\!1,\theta}^{h},u_{n,\theta}^{h}) 
			\!-\! \widetilde{f}(u_{n\!+\!1,\theta},u_{n,\theta}^{h}) \big\|^{2}  			
			\leq L^{2} \| u_{n+1,\theta}^{h} \!-\! u_{n+1,\theta} \|^{2}   \label{eq:DLN-CSS-error-nonlinear-1} \\
			\leq\,& L^{2} \big\| \eta_{n+1,\theta} + \phi_{n+1,\theta}^{h} \big\|^{2} %\notag \\
			\leq C L^{2} h^{2s\!+\!2} \!\big\| u_{n\!+\!1,\theta} \big\|_{s+1}^{2} 
			\!+\! 2 L^{2} \big\| \phi_{n+1,\theta}^{h} \big\|^{2}  \notag \\
			\leq\,& C L^{2} h^{2s\!+\!2} \! \Big( \big\| u_{n+1,\theta} - u(t_{n+1,\theta}) \big\|_{s+1}^{2} \!+\! \| u(t_{n+1,\theta}) \|_{s+1}^{2} \Big) 
			\!+\! 2 L^{2} \big\| \phi_{n+1,\theta}^{h} \big\|^{2}  
			\notag \\
			\leq& C(\theta,L) h^{2s\!+\!2} \! \Big( k_{n} \!\! \int_{t_{n}}^{t_{n+1}}\!\! \| u_{t} \|_{s+1}^{2} dt + \| u(t_{n+1,\theta})\|_{s+1}^{2} \Big) \notag \\
            &+ (1+\theta)^{2} L^{2} \big( \| \phi_{n+1}^{h} \|^{2} + \| \phi_{n}^{h} \|^{2} \big), \notag
	    \end{align}
        where the last inequality follows from H$\rm{\ddot{o}}$lder's inequality as in \eqref{eq:DLN-CSS-Error-term2-eq2}. Similarly, we have
		\begin{align}
			& \big\| \widetilde{f}(u_{n+1,\theta},u_{n,\theta}^{h}) 
			- \widetilde{f}(u_{n+1,\theta},u_{n,\theta}) \big\|^{2}  
			\label{eq:DLN-CSS-error-nonlinear-2} \\
			\leq& C(\theta,L) h^{2s\!+\!2} \! \Big( k_{n\!-\!1}\! \int_{t_{n\!-\!1}}^{t_{n}} \| u_{t} \|_{s\!+\!1}^{2} dt \!+\! \| u(t_{n,\theta})\|_{s\!+\!1}^{2} \Big) \!+\! (1\!+\!\theta)^{2} L^{2} \big( \| \phi_{n}^{h} \|^{2} \!+\! \| \phi_{n\!-\!1}^{h} \|^{2} \big). \notag
		\end{align}
        We use \eqref{eq:consist-1st-eq1}-\eqref{eq:consist-1st-eq2} in Lemma \ref{lemma:consistency-1st} and \eqref{eq:2nd-approx-f} to obtain
		\begin{align}
			&\| \widetilde{f}(u_{n+1,\theta},u_{n,\theta}) - f ( u(t_{n,\beta}) ) \|^{2}  \label{eq:DLN-CSS-error-nonlinear-3} \\
			\leq& C(L) \big\| \frac{u_{n+1,\theta} + u_{n,\theta}}{2} - u(t_{n,\beta}) \big\|^{2} 
			+ C \| (u_{n+1,\theta} - u_{n,\theta})^{2} \|^{2}  \notag \\
			\leq& C(\theta,L) (k_{n} + k_{n-1}) \int_{t_{n-1}}^{t_{n+1}} \| u_{t} \|^{2} dt
			+ C(\theta) (k_{n} + k_{n-1})^{3} \int_{t_{n-1}}^{t_{n+1}} \| u_{t} \|_{L^{4}}^{4} dt. \notag  
		\end{align}
        Combining \eqref{eq:DLN-CSS-error-nonlinear} - \eqref{eq:DLN-CSS-error-nonlinear-3}, we have 
		\begin{align}
			& \Big(\widetilde{f}(u_{n+1,\theta}^{h},u_{n,\theta}^{h}) - f(u(t_{n,\beta})), \phi_{n,\beta}^{h} \Big) 
			\label{eq:DLN-CSS-error-nonlinear-final} \\
			\leq& C(\theta,\delta,L) h^{2s+2} \! \Big( k_{\rm{max}} \int_{t_{n-1}}^{t_{n+1}} \| u_{t} \|_{s+1}^{2} dt 
			+ \| u(t_{n+1,\theta})\|_{s+1}^{2} + \| u(t_{n,\theta})\|_{s+1}^{2} \Big) \notag \\
			& + C(\theta,\delta,L) k_{\rm{max}} \int_{t_{n-1}}^{t_{n+1}} \| u_{t} \|^{2} dt
			+ C(\theta,\delta,L) k_{\rm{max}}^{3} \int_{t_{n-1}}^{t_{n+1}} \| u_{t} \|_{L^{4}}^{4} dt \notag \\
			&+ \frac{27 (1+\theta) C_{\beta}^{2}(\theta)}{4 \delta} \big( \| \phi_{n+1}^{h} \|^{2} + 2 \| \phi_{n}^{h} \|^{2} 
			+ \| \phi_{n-1}^{h} \|^{2}\big) \notag \\
      		&+ \frac{\delta L^{2} (1+\theta)}{3} \big( \| \phi_{n+1}^{h} \|^{2} + \| \phi_{n}^{h} \|^{2} + \| \phi_{n-1}^{h} \|^{2}\big). \notag 
		\end{align}

        Now, we are ready to derive the error estimate. We combine \eqref{eq:DLN-CSS-error-eq2}, \eqref{eq:DLN-CSS-Error-term1}, \eqref{eq:DLN-CSS-Error-term2}, 
		\eqref{eq:DLN-CSS-Error-term3} and \eqref{eq:DLN-CSS-error-nonlinear-final}, multiply both sides by $\widehat{k}_{n}$ and use the time ratio condition in \eqref{eq:time-ratio-condi} to obtain 
        \begin{align} 
			& \begin{Vmatrix}
				{\phi_{n\!+\!1}^{h}} \\
				{\phi_{n}^{h}}
			\end{Vmatrix}_{G(\!\theta \!)}^{2} \!-\!
			\begin{Vmatrix}
				{\phi_{n}^{h}} \\
				{\phi_{n\!-\!1}^{h}}
			\end{Vmatrix}_{G(\!\theta \!)}^{2} 
			\!+\! \Big\|\sum_{\ell \!=\!0}^{2}{\gamma_{\ell }^{(n)}} \phi_{n-1+\ell}^{h} \Big\|^{2} 
			+ \frac{\epsilon^{2}}{2} \widehat{k}_{n} \| \nabla \phi_{n,\beta}^{h} \|^{2} 
			\label{eq:DLN-CSS-error-eq3} \\
			\leq& (1+\theta) \Big( \frac{27 C_{\beta}^{2}(\theta)}{4 \delta} 
			+ \delta L^{2}  \Big) \widehat{k}_{n} 
			\big( \| \phi_{n+1}^{h} \|^{2} + 2 \| \phi_{n}^{h} \|^{2} + \| \phi_{n-1}^{h} \|^{2}\big) \notag \\
			&+ C(\theta,\delta,L) k_{\rm{max}}^{4} \int_{t_{n-1}}^{t_{n+1}} \| u_{ttt} \|^{2} dt
			+ C(\delta,\theta,L) h^{2s+2} \! \int_{t_{n-1}}^{t_{n+1}} \| u_{t} \|_{s+1}^{2} dt \notag \\
			&+ C(\theta) \epsilon^{2} k_{\rm{max}}^{4} \int_{t_{n-1}}^{t_{n+1}} \| \nabla u_{tt} \|^{2} dt  
			+ C(\theta,\delta,L) h^{2s+2} k_{\rm{max}}^{2} \int_{t_{n-1}}^{t_{n+1}} \| u_{t} \|_{s+1}^{2} dt \notag \\
			&+ C(\theta,\delta,L) h^{2s+2} \big( \widehat{k}_{n}\| u(t_{n+1,\theta})\|^{2}_{s+1} + \widehat{k}_{n}\| u(t_{n,\theta})\|^{2}_{s+1}  \big)  \notag \\ 
			&+ C(\theta,\delta,L) k_{\rm{max}}^{2} \int_{t_{n-1}}^{t_{n+1}} \| u_{t} \|^{2} dt
			+ C(\theta,\delta,L) k_{\rm{max}}^{4} \int_{t_{n-1}}^{t_{n+1}} \| u_{t} \|_{L^{4}}^{4} dt, \notag
		\end{align}
        We sum \eqref{eq:DLN-CSS-error-eq3} over $n$ from $1$ to $N-1$, use the definition of $G(\theta)$-norm in \eqref{eq:G-norm}, drop the non-negative term $\| \phi_{N-1}^{h} \|^{2}$, $\| \sum_{\ell=0}^{2} \gamma_{\ell}^{(n)} \phi_{n-1+\ell}^{h} \|^{2}$ and use the time ratio condition in \eqref{eq:time-ratio-condi} to obtain 
		\begin{align}
			&\Big[ \frac{1+\theta}{4} - (1+\theta) \Big( \frac{27 C_{\beta}^{2}(\theta)}{4 \delta} + \delta L^{2}  \Big) k_{\rm{max}} \Big] \| \phi_{N}^{h} \|^{2}
			+ \frac{\epsilon^{2}}{2} \sum_{n=1}^{N-1} \widehat{k}_{n} \| \nabla \phi_{n,\beta}^{h} \|^{2} 
			\label{eq:DLN-CSS-error-eq4} \\
			\leq& C(\theta, \delta, L,C_{\ell},C_{u}) \sum_{n=0}^{N-1}  k_{n} \| \phi_{n}^{h} \|^{2} 
			+ C(\theta,\delta,L) k_{\rm{max}}^{4}  \| u_{ttt} \|_{L^{2}(0,T;L^{2})}^{2}
			\notag \\
			&+ C(\delta,\theta,L) h^{2s+2} (1+k_{\rm{max}}^{2}) \| u_{t} \|_{L^{2}(0,T;H^{s+1})}^{2}  
			+ C(\theta) \epsilon^{2} k_{\rm{max}}^{4} \| \nabla u_{tt} \|_{L^{2}(0,T;L^{2})}^{2}
			\notag \\
			&+ C(\theta,\delta,L,C_{\ell},C_{u}) h^{2s+2} \| u \|_{\ell_{\theta}^{2}(0,N;H^{s+1})}^{2} 
			+ C(\theta,\delta,L) k_{\rm{max}}^{2} \| u_{t} \|_{L^{2}(0,T;L^{2})}^{2} \notag \\
			&+ C(\theta,\delta,L) k_{\rm{max}}^{4} \| u_{t} \|_{L^{4}(0,T;L^{4})}^{4} + \frac{1+\theta}{4}\| \phi_{1}^{h} \|^{2} + \frac{1-\theta}{4}\| \phi_{0}^{h} \|^{2}. \notag  
		\end{align}

        To estimate $\| \phi_{N}^{h} \|^{2}$, we need 
		\begin{gather*}
			\frac{1+\theta}{4} - (1+\theta) \Big( \frac{27 C_{\beta}^{2}(\theta)}{4 \delta} + \delta L^{2}  \Big) k_{\rm{max}} > 0.
		\end{gather*}
		We set $\delta = \frac{3\sqrt{3} C_{\beta}(\theta)}{2L}$ to relax the upper bound for $k_{\rm{max}}$ and obtain the time step restriction in \eqref{eq:time-max-condi}.
		By the discrete Gr$\rm{\ddot{o}}$nwall inequality \cite{HR90_SIAM_NA} and the fact that the maximum time step $k_{\rm{max}} < 1$, we have 
		\begin{align}
			&\| \phi_{N}^{h} \|^{2}
			+ \frac{\epsilon^{2} C(\theta,L)}{2} \sum_{n=1}^{N-1} \widehat{k}_{n} \| \nabla \phi_{n,\beta}^{h} \|^{2} 
			\label{eq:DLN-CSS-phi} \\
			\leq& \exp \Big(C(\theta,L,C_{\ell},C_{u}) T \Big) \Big\{ C(\theta,L) k_{\rm{max}}^{4} \| u_{ttt} \|_{L^{2}(0,T;L^{2})}^{2}
			+ C(\theta,L) h^{2s+2} \| u_{t} \|_{L^{2}(0,T;H^{s+1})}^{2} \notag \\
			&\qquad \qquad \qquad \quad + C(\theta) \epsilon^{2} k_{\rm{max}}^{4} \| \nabla u_{tt} \|_{L^{2}(0,T;L^{2})}^{2}
			+ C(\theta,L) k_{\rm{max}}^{2} \| u_{t} \|_{L^{2}(0,T;L^{2})}^{2} \notag \\
			&\qquad \qquad \qquad \quad + \!C(\theta,L,C_{\ell},C_{u}) h^{2s+2} \| u \|_{\ell_{\theta}^{2}(0,N;H^{s+1})}^{2}
			\!+\! C(\theta,L) k_{\rm{max}}^{4} \| u_{t} \|_{L^{4}(0,T;L^{4})}^{4} \notag \\
			&\qquad \qquad \qquad \quad + C(\theta) \big(\| \phi_{1}^{h} \|^{2} + \| \phi_{0}^{h} \|^{2} \big) \Big\}, \notag 
		\end{align} 
        which leads to
		\begin{align}
			&\| \phi_{N}^{h} \| + \Big( \frac{\epsilon^{2}}{2} \sum_{n=1}^{N-1} \widehat{k}_{n} \| \nabla \phi_{n,\beta}^{h} \|^{2} \Big)^{1/2}  
			\label{eq:DLN-CSS-phi-final} \\
			\leq& C(\theta,L,T,C_{\ell},C_{u}) \Big\{ \! k_{\rm{max}}^{2} \| u_{ttt} \|_{L^{2}(0,T;L^{2})} 
			\!+\! h^{s+1} \| u_{t} \|_{L^{2}(0,T;H^{s+1})} 
			\!+\! \epsilon k_{\rm{max}}^{2} \| \nabla u_{tt} \|_{L^{2}(0,T;L^{2})}
			\notag \\
			&\qquad \qquad \qquad 
			+ k_{\rm{max}} \| u_{t} \|_{L^{2}(0,T;L^{2})} 
			+ h^{s+1} \| u \|_{\ell_{\theta}^{2}(0,N;H^{s+1})}
			+ k_{\rm{max}}^{2} \| u_{t} \|_{L^{4}(0,T;L^{4})}^{2}
			\notag \\
			&\qquad \qquad \qquad + \| \phi_{1}^{h} \| + \| \phi_{0}^{h} \| \Big\} \notag.
		\end{align}
        By \eqref{eq:DLN-CSS-phi-final}, triangle inequality and \eqref{eq:ApproxProjector}, we have \eqref{eq:conclusion-vari}.

        Lastly, we comment on the case of uniform time grids with constant time step $k$. Under such case, utilizing \eqref{eq:consist-1st-eq1} and \eqref{eq:consist-1st-eq2-const} in Lemma \ref{lemma:consistency-1st}, Eq. \eqref{eq:DLN-CSS-error-nonlinear-3} becomes
		\begin{align*}
		&\| \widetilde{f}(u_{n+1,\theta},u_{n,\theta}) - f ( u(t_{n,\beta}) ) \|^{2}  \\
		\leq& C(L) \Big\| \frac{u_{n+1,\theta} + u_{n,\theta}}{2} - u(t_{n,\beta}) \Big\|^{2} + C \| (u_{n+1,\theta} - u_{n,\theta})^{2} \|^{2}  \notag \\
		\leq& C(\theta,L) k^{3} \int_{t_{n-1}}^{t_{n+1}} \| u_{tt} \|^{2} dt
		+ C(\theta) k^{3} \int_{t_{n-1}}^{t_{n+1}} \| u_{t} \|_{L^{4}}^{4} dt. \notag  
		\end{align*}
		Following similar arguments, we have \eqref{eq:conclusion-const}.

    \end{proof}

    \begin{remark}
        We have proven that the modified DLN algorithm is first-order accurate in time under arbitrary time step sequences. However, all numerical tests in Section \ref{sec:NumercalTests} demonstrate that numerical solutions on both uniform and non-uniform time grids converge at second-order in time. 
    \end{remark}

    \section{The DLN-SAV algorithm} 
	\label{sec:DLN-SAV}
	In this section, we consider the energy stable SAV approach, and present the DLN-SAV algorithm. For the Allen-Cahn model \eqref{eq:AllenCahn_Eq}, we introduce the scalar auxiliary variable of the form
	\begin{align*}
		r(t) &= \sqrt{E(u(x,t)) + C_{0}}, 
	\end{align*}
    with 
    \[    E(u(x,t)) = \int_{\Omega} F(u(x,t)) dx = \frac14 \int_{\Omega} (u(x,t)^2-1)^2 dx\geq 0,
    \]
    where $C_{0}$ is a positive constant (can be any positive constant) to ensure $r(t)>0$. In our numerical experiments, we often take $C_0=0$. 
    Then the Allen-Cahn model in \eqref{eq:AllenCahn_Eq} can be equivalently written as
	\begin{gather*}
		\begin{cases}
		\displaystyle u_{t} - \epsilon^{2} \Delta u + \frac{r}{\sqrt{E(u) + C_{0}}} f(u) = 0,  \qquad  x \in \Omega, \ 0 < t \leq T, \\
		\displaystyle r_t = \frac{1}{2\sqrt{E(u) + C_{0}}} \int_{\Omega} f(u)u_t dx, \qquad  \ 0 < t \leq T.
		\end{cases} 
	\end{gather*}

    \subsection{The algorithm}
	Let $u_{n}^{h}$, $r_{n}^{h}$ be fully-discrete numerical solutions of $u(x,t_{n})$, $r(t_{n})$ respectively.
	The fully-discrete weak formulation for the DLN-SAV algorithm with the finite element spatial discretization is: given $u_{n-1}^{h},u_{n}^{h} \in X_{s}^{h}$ and $r_{n-1}^{h},r_{n}^{h}$, we find $u_{n+1}^{h} \in X_{s}^{h}$ and $r_{n+1}^{h}$ such that for all $v^{h} \in X_{s}^{h}$
	\begin{gather}
	\begin{cases}
		\displaystyle \Big(\frac{ u_{n,\alpha}^{h}}{\widehat{k}_{n}}, v^{h} \Big)
		+ \epsilon^{2} \big(\nabla u_{n,\beta}^{h}, \nabla v^{h} \big)
		+ \frac{r_{n,\beta}^{h}}{ \sqrt{E \big( u_{n,\ast}^{h} \big) + C_{0} }} \big( f (u_{n,\ast}^{h}), v^{h} \big) = 0, \\
		\displaystyle \frac{ r_{n,\alpha}^{h}}{\widehat{k}_{n}} 
		= \frac{1}{2 \sqrt{E \big( u_{n,\ast}^{h} \big) + C_{0} }} \Big( f (u_{n,\ast}^{h}), \frac{u_{n,\alpha}^{h}}{\widehat{k}_{n}} \Big),
	\end{cases}
	\label{eq:DLN-SAV-Alg-weak} 
	\end{gather} 
    where $u_{n,\ast}^{h}$ defined in \eqref{eq:nota-seq} is the explicit second-order approximation to $u(t_{n,\beta})$ in time.

    The variable time-stepping DLN scheme can be simplified by the refactorization process: adding a few lines of codes to the \textit{backward Euler (BE) scheme} to obtain the DLN solutions (see \cite{LPT21_AML} for details).
    The DLN-SAV algorithm can be equivalently rewritten as the following process: \\
	\textit{Step 1.} Pre-process
	\begin{gather*}
        k_{n}^{\tt{BE}} = b^{(n)} \widehat{k}_{n}, \ \ 
		u_{n}^{h,\tt{old}} = a_{1}^{(n)} u_{n}^{h} + a_{0}^{(n)} u_{n-1}^{h} \in X_{s}^{h}, \ \ 
		r_{n}^{h,\tt{old}} = a_{1}^{(n)} r_{n}^{h} + a_{0}^{(n)} r_{n-1}^{h},
	\end{gather*} 
    where the coefficients for the pre-process are
    \begin{gather*}
		a_{1}^{(n)} = \beta_{1}^{(n)} - \alpha_{1} \beta_{2}^{(n)} / \alpha_{2}, \,\,  
		a_{1}^{(n)} = \beta_{0}^{(n)} - \alpha_{0} \beta_{2}^{(n)} / \alpha_{2}, \,\,
		b^{(n)} = \beta_{2}^{(n)} / \alpha_{2}.
	\end{gather*}
    \textit{Step 2.} BE solver to solve $u_{n+1}^{h,\tt{temp}} \in X_{s}^{h}$ and $r_{n+1}^{h,\tt{temp}}$ such that for all $v^{h} \in X_{s}^{h}$ 
	\begin{gather}
		\begin{cases}
			\displaystyle \Big(\frac{ u_{n+1}^{h,\tt{temp}} - u_{n}^{h,\tt{old}}}{k_{n}^{\tt{BE}}}, v^{h} \Big)
			\!+\! \epsilon^{2} \big(\nabla u_{n+1}^{h,\tt{temp}}, \nabla v^{h} \big)
			\!+\! \frac{r_{n+1}^{h,\tt{temp}}}{ \sqrt{E \big( u_{n,\ast}^{h} \big) + C_{0} }} \big( f (u_{n,\ast}^{h}), v^{h} \big) \!=\! 0, \\
			\displaystyle \frac{r_{n+1}^{h,\tt{temp}} - r_{n}^{h,\tt{old}} }{k_{n}^{\tt{BE}}} 
			= \frac{1}{2 \sqrt{E \big( u_{n,\ast}^{h} \big) + C_{0} }} \Big( f (u_{n,\ast}^{h}), \frac{u_{n+1}^{h,\tt{temp}} - u_{n}^{h,\tt{old}}}{k_{n}^{\tt{BE}}} \Big).
		\end{cases}
		\label{eq:BE-SAV-Alg-weak} 
	\end{gather}
    \textit{Step 3.} Post-process
	\begin{gather*}
		u_{n+1}^{h} \!=\! c_{2}^{(n)} u_{n+1}^{h,\tt{temp}} \!+\! c_{1}^{(n)} u_{n}^{h} \!+\! c_{0}^{(n)} u_{n-1}^{h}, 
    \quad
		r_{n+1}^{h} \!=\! c_{2}^{(n)} r_{n+1}^{h,\tt{temp}} \!+\! c_{1}^{(n)} r_{n}^{h} \!+\! c_{0}^{(n)} r_{n-1}^{h},
	\end{gather*}
	where the coefficients for the post-process are 
	\begin{gather*}
		c_{2}^{(n)} = 1/ \beta_{2}^{(n)}, \qquad c_{1}^{(n)}  = - \beta_{1}^{(n)} / \beta_{2}^{(n)}, \qquad 
		c_{0}^{(n)}  = - \beta_{0}^{(n)} / \beta_{2}^{(n)}.
	\end{gather*}
    The above refactorization process can be applied to the modified DLN algorithm in \eqref{eq:DLN-CSS-Alg} as well. 
    However the refactorization process of the modified DLN algorithm does not simplify the implementation much since the partially implicit, non-linear term needs to be refactorized as well, and was omitted in Section \ref{sec:DLN-CSS}. 

    \begin{lemma}
		The DLN-SAV algorithm in \eqref{eq:DLN-SAV-Alg-weak} admits a unique solution. 
	\end{lemma}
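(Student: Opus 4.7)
The plan is to decouple the SAV block by using the scalar $r$-equation to eliminate $r_{n+1}^h$ from the vector $u$-equation, reducing \eqref{eq:DLN-SAV-Alg-weak} to a single linear finite-element equation for $u_{n+1}^h \in X_s^h$ whose associated bilinear form is symmetric positive definite.

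First I would note that $u_{n,\ast}^h$, $f(u_{n,\ast}^h)$ and $S := \sqrt{E(u_{n,\ast}^h) + C_0} > 0$ depend only on the given data $u_n^h, u_{n-1}^h$. Writing $u_{n,\alpha}^h = \alpha_2 u_{n+1}^h + \text{(known)}$, $u_{n,\beta}^h = \beta_2^{(n)} u_{n+1}^h + \text{(known)}$, $r_{n,\alpha}^h = \alpha_2 r_{n+1}^h + \text{(known)}$ and $r_{n,\beta}^h = \beta_2^{(n)} r_{n+1}^h + \text{(known)}$, the second equation of \eqref{eq:DLN-SAV-Alg-weak} solves explicitly for $r_{n+1}^h$ as an affine functional of $u_{n+1}^h$. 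Substituting this expression into the first equation yields a single linear problem: find $u_{n+1}^h \in X_s^h$ such that $B(u_{n+1}^h, v^h) = \ell(v^h)$ for every $v^h \in X_s^h$, where
\begin{align*}
B(w,v) \,=\, \frac{\alpha_2}{\widehat{k}_n}(w,v) + \epsilon^2 \beta_2^{(n)} (\nabla w, \nabla v) + \frac{\beta_2^{(n)}}{2 S^2}\bigl(f(u_{n,\ast}^h), w\bigr)\bigl(f(u_{n,\ast}^h), v\bigr).
\end{align*}

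The key step is to show $B$ is positive definite on $X_s^h$. For $\theta \in [0,1]$, inspection of \eqref{eq:alphabeta} gives $\alpha_2 = (1+\theta)/2 > 0$; because $1-\theta^2 \geq 0$, $\theta \geq 0$ and $1+\varepsilon_n \theta > 0$ (as $|\varepsilon_n| < 1$), every summand in $\beta_2^{(n)}$ is non-negative and the leading constant $1/4$ is strictly positive, so $\beta_2^{(n)} > 0$. Likewise $\widehat{k}_n = \frac{1+\theta}{2} k_n + \frac{1-\theta}{2} k_{n-1} > 0$. Therefore every coefficient in $B$ is strictly positive and $B(w,w) \geq \tfrac{\alpha_2}{\widehat{k}_n}\|w\|^2$, so $B(w,w)=0$ forces $w=0$. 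Because $X_s^h$ is finite-dimensional, this injectivity is equivalent to bijectivity of the associated operator, giving a unique $u_{n+1}^h$; the algebraic relation derived from the $r$-equation then produces the unique $r_{n+1}^h$.

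The only delicate part, as I see it, is the algebraic bookkeeping in the elimination: one must carefully track the contributions of $r_{n+1}^h$ to both $r_{n,\alpha}^h$ and $r_{n,\beta}^h$, and verify that the rank-one cross term in $B$ enters with a non-negative coefficient (which follows because $\beta_2^{(n)}/\alpha_2 > 0$ combines with the matching $1/S$ factor already present in the first equation). Once this is done, the positivity of $\alpha_2, \beta_2^{(n)}, \widehat{k}_n$ uniformly in $\varepsilon_n \in (-1,1)$ is immediate from the formulas in \eqref{eq:alphabeta}, and the rest is standard linear algebra.
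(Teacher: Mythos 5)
Your proposal is correct. The core mechanism is the same as the paper's: eliminate the scalar variable using the $r$-equation, observe that the resulting linear problem for $u_{n+1}^h$ is a symmetric positive semi-definite rank-one perturbation of a positive definite operator, and conclude injectivity (hence bijectivity) on the finite-dimensional space $X_s^h$. The difference is the route: the paper first invokes the equivalence of \eqref{eq:DLN-SAV-Alg-weak} with the refactorization process (pre-process, BE-SAV solver \eqref{eq:BE-SAV-Alg-weak}, post-process) and then proves unique solvability of the BE-SAV step, arriving at the matrix $M + \epsilon^2 k_n^{\tt{BE}} K + \tfrac12 k_n^{\tt{BE}} B^{(n)}(B^{(n)})^T$; you perform the elimination directly on the one-leg DLN form and obtain the bilinear form $B(w,v) = \tfrac{\alpha_2}{\widehat{k}_n}(w,v) + \epsilon^2\beta_2^{(n)}(\nabla w,\nabla v) + \tfrac{\beta_2^{(n)}}{2S^2}(f(u_{n,\ast}^h),w)(f(u_{n,\ast}^h),v)$, which I verified is what the substitution produces. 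Your version is self-contained (it does not rest on the refactorization equivalence, which itself implicitly requires $\alpha_2,\beta_2^{(n)}\neq 0$), at the cost of having to check the positivity of $\alpha_2$, $\beta_2^{(n)}$ and $\widehat{k}_n$ explicitly --- which you do correctly for all $\theta\in[0,1]$ and $|\varepsilon_n|<1$. One small bookkeeping remark: in the elimination the factor $\alpha_2$ cancels (since $\alpha_2 r_{n+1}^h = \tfrac{1}{2S}(f, \alpha_2 u_{n+1}^h) + \text{known}$), so the rank-one coefficient is $\beta_2^{(n)}/(2S^2)$ rather than involving the ratio $\beta_2^{(n)}/\alpha_2$; this does not affect your conclusion since both quantities are positive.
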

    \begin{proof}
        Since the DLN-SAV algorithm in \eqref{eq:DLN-SAV-Alg-weak} is equivalent to the above refactorization process, it suffices to show that the BE solver in \eqref{eq:BE-SAV-Alg-weak} has a unique solution. 
        We denote
		\begin{gather*}
			\varphi_{n}^{h} = \frac{f (u_{n,\ast}^{h})}{ \sqrt{E \big( u_{n,\ast}^{h} \big) + C_{0} }},
		\end{gather*}
		and eliminate $r_{n+1}^{h,\tt{temp}}$ in \eqref{eq:BE-SAV-Alg-weak} to obtain %: for all  $v^{h} \in X_{s}^{h}$
		\begin{gather}
			\big( u_{n+1}^{h,\tt{temp}}, v^{h} \big) + \epsilon^{2} k_{n}^{\tt{BE}} \big( \nabla u_{n+1}^{h,\tt{temp}}, \nabla v^{h} \big) + \frac{1}{2} k_{n}^{\tt{BE}} \big( u_{n+1}^{h,\tt{temp}}, \varphi_{n}^{h} \big)
			\big( \varphi_{n}^{h}, v^{h} \big) = (w_{n}^{h}, v^{h}), 
			\notag \\
			w_{n}^{h} = u_{n}^{h,\tt{old}} + \frac{1}{2} k_{n}^{\tt{BE}} \big( u_{n}^{h,\tt{old}}, \varphi_{n}^{h} \big)\varphi_{n}^{h} - k_{n}^{\tt{BE}} r_{n}^{h,\tt{old}} \varphi_{n}^{h}. 
            \label{eq:BE-SAV-Alg-weak-eq2} 
		\end{gather}
        Let $\{ \psi_{\ell} \}_{\ell=1}^{N_d}$ be the basis for $X_{s}^{h}$. We have 
		$\displaystyle u_{n+1}^{h,\tt{temp}} = \sum_{\ell=1}^{N_{d}} U_{\ell}^{(n+1)} \psi_{\ell} $ for some coordinates $\{ U_{\ell}^{(n+1)} \}_{\ell=1}^{N_{d}}$. We define the matrices $M,K \in \mathbb{R}^{N_{d} \times N_{d}}$ as
		\begin{gather*}
			M = \big[ M_{ij} \big]_{N_d \times N_d} = \big[ ( \psi_{j}, \psi_{i} ) \big]_{N_d \times N_d},   \quad 
			K = \big[ K_{ij} \big]_{N_d \times N_d} = \big[ ( \nabla \psi_{j}, \nabla \psi_{i} ) \big]_{N_d \times N_d},
		\end{gather*}
	    and vectors $B^{(n)}, U^{(n+1)}, W^{(n)} \in \mathbb{R}^{N_{d} \times 1}$ as
		\begin{align*}
			B^{(n)} &= \big[ B_{i}^{(n)} \big]_{N_d \times 1} = \big[ ( \varphi_{n}^{h}, \psi_{i} ) \big]_{N_d \times 1},
			\quad 
			U^{(n+1)} = \big[ U_{i}^{(n+1)} \big]_{N_d \times 1}, \\
			W^{(n)} &= \big[ W_{i}^{(n)} \big]_{N_d \times 1} = \big[ (w_{n}^{h}, \psi_{i}) \big]_{N_d \times 1}.
		\end{align*}
        Then the algebraic structure for \eqref{eq:BE-SAV-Alg-weak-eq2} is 
		\begin{gather}
			\big[ M + \epsilon^{2} k_{n}^{\tt{BE}} K + \frac{1}{2} k_{n}^{\tt{BE}} B^{(n)} (B^{(n)})^{T} \big] U^{(n+1)}
			= W^{(n)}. 
			\label{eq:BE-SAV-Algebra}
		\end{gather}
		Since the mass matrix $M>0$, the stiffness matrix $K \geq 0$ and $B^{(n)} (B^{(n)})^{T} \geq 0$, the linear system in \eqref{eq:BE-SAV-Algebra} has a unique solution. 
	    In addition, $r_{n+1}^{h,\tt{temp}}$  is uniquely obtained from the second equation of \eqref{eq:BE-SAV-Alg-weak}. 

    \end{proof}

    \subsection{Numerical implementation}
    The left matrix in \eqref{eq:BE-SAV-Algebra} is far away from a sparse matrix. To efficiently implement the DLN-SAV scheme, we solve \eqref{eq:BE-SAV-Alg-weak-eq2} in the following simpler way. We denote 
	$A \!=\! I \!-\! \epsilon^{2} k_{n}^{\tt{BE}} \!\Delta > 0$ and apply integration by parts to \eqref{eq:BE-SAV-Alg-weak-eq2} to rewrite it as
	\begin{gather}
		\big( A u_{n+1}^{h,\tt{temp}}, v^{h} \big) + \frac{k_{n}^{\tt{BE}}}{2} (u_{n+1}^{h,\tt{temp}}, \varphi_{n}^{h}) (\varphi_{n}^{h}, v^{h}) = (w_{n}^{h}, v^{h}), \quad \forall v^{h} \in X_{s}^{h}.
		\label{eq:BE-SAV-Alg-weak-eq3}
	\end{gather}
    To solve $u_{n+1}^{h,\tt{temp}}$ in \eqref{eq:BE-SAV-Alg-weak-eq3} efficiently, we first solve for $(u_{n+1}^{h,\tt{temp}}, \varphi_{n}^{h})$.
    Given any function $g \in (H^{-1}(\Omega))^{d}$, we define $A^{-1} g \in X_{s}^{h}$ to be the unique finite element solution to the following second-order system
    \begin{gather*}
        \begin{cases}
			u - \epsilon^{2} k_{n}^{\tt{BE}} \Delta u = g,  & x \in \Omega,   \\
			\frac{\partial u}{\partial \vec{n} } = 0,   & \text{on } \partial{\Omega} .
		\end{cases}
    \end{gather*}
    Thus $A^{-1} g \in X_{s}^{h}$ satisfies the following weak form
	\begin{gather}
		\begin{cases}
			\big( A (A^{-1} g), v^{h} \big) = (g,v^{h}), \\
			\big( \frac{\partial}{\partial \vec{n} } (A^{-1} g), v^{h} \big)_{\partial{\Omega}} = 0,
		\end{cases} \qquad \forall v^{h} \in X_{s}^{h}.
		\label{eq:def-2nd-eq}
	\end{gather}
    We use the definition \eqref{eq:def-2nd-eq} and integration by parts to obtain
	\begin{gather}
		\big( A u_{n+1}^{h,\tt{temp}}, A^{-1} \varphi_{n}^{h} \big) = 
		\big( u_{n+1}^{h,\tt{temp}}, A (A^{-1} \varphi_{n}^{h}) \big) 
		= \big( u_{n+1}^{h,\tt{temp}}, \varphi_{n}^{h} \big), 
		\label{eq:BE-SAV-Alg-weak-eq3-terms-computing} \\
		(w_{n}^{h}, A^{-1} \varphi_{n}^{h}) = \big( A (A^{-1}w_{n}^{h}), A^{-1} \varphi_{n}^{h} \big)
		= \big( A^{-1}w_{n}^{h}, A (A^{-1} \varphi_{n}^{h} ) \big)
		= \big( A^{-1}w_{n}^{h}, \varphi_{n}^{h} \big). \notag 
	\end{gather}
    We use \eqref{eq:BE-SAV-Alg-weak-eq3-terms-computing} and set $v^{h} = A^{-1} \varphi_{n}^{h}$ in \eqref{eq:BE-SAV-Alg-weak-eq3} to solve 
	\begin{gather}
		(u_{n+1}^{h,\tt{temp}}, \varphi_{n}^{h}) 
		= \frac{ \big( A^{-1}w_{n}^{h}, \varphi_{n}^{h} \big)}{ 1 + \frac{k_{n}^{\tt{BE}}}{2} \big( A^{-1}\varphi_{n}^{h}, \varphi_{n}^{h} \big)}.
		\label{eq:BE-SAV-Alg-weak-eq4}
	\end{gather}
    To obtain $u_{n+1}^{h,\tt{temp}}$ from \eqref{eq:BE-SAV-Alg-weak-eq3}, it suffices to solve 
	\begin{gather*}
		A u_{n+1}^{h,\tt{temp}} + \frac{k_{n}^{\tt{BE}}}{2} (u_{n+1}^{h,\tt{temp}}, \varphi_{n}^{h}) \varphi_{n}^{h}
		= w_{n}^{h},
	\end{gather*}
	or equivalently, 
	\begin{align}
		u_{n+1}^{h,\tt{temp}} = - \frac{k_{n}^{\tt{BE}}}{2} (u_{n+1}^{h,\tt{temp}}, \varphi_{n}^{h}) A^{-1} \varphi_{n}^{h} + A^{-1} w_{n}^{h}.
		\label{eq:BE-SAV-Alg-weak-eq5}
	\end{align}

    We summarize the simplified implementation in the following steps.
    \begin{itemize}
		\item[(i)] Solving for $A^{-1} \varphi_{n}^{h} \in X_{s}^{h}$ via
		\begin{gather}
			\big( A^{-1} \varphi_{n}^{h}, v^{h} \big) + \epsilon^{2} k_{n}^{\tt{BE}} \big( \nabla (A^{-1} \varphi_{n}^{h}), \nabla v^{h} \big)
			= \big( \varphi_{n}^{h}, v^{h} \big), \quad \forall v^{h} \in X_{s}^{h}. 
			\label{eq:BE-SAV-Alg-weak-eq6}
		\end{gather}
		%and then computing $( \varphi_{n}^{h}, A^{-1} \varphi_{n}^{h})$,
		\item[(ii)] Solving for $A^{-1} w_{n}^{h} \in X_{s}^{h}$ via
		\begin{gather}
			\big( A^{-1} w_{n}^{h}, v^{h} \big) + \epsilon^{2} k_{n}^{\tt{BE}} \big( \nabla (A^{-1} w_{n}^{h} ), \nabla v^{h} \big)
			= \big( w_{n}^{h}, v^{h} \big), \quad \forall v^{h} \in X_{s}^{h}.
			\label{eq:BE-SAV-Alg-weak-eq7}
		\end{gather}
		%and then computing $(u_{n+1}^{h,\tt{temp}}, \varphi_{n}^{h})$ by \eqref{eq:BE-SAV-Alg-weak-eq4}.
		\item[(iii)] Computing 
		$(u_{n+1}^{h,\tt{temp}}, \varphi_{n}^{h})$ by \eqref{eq:BE-SAV-Alg-weak-eq4}.
            \item[(iv)] Computing $u_{n+1}^{h,\tt{temp}}$ from \eqref{eq:BE-SAV-Alg-weak-eq5} and $r_{n+1}^{h,\tt{temp}}$ from the second equation of \eqref{eq:BE-SAV-Alg-weak}.
	\end{itemize}

    \begin{remark}
        It is straightforward to verify that the solution pair $(u_{n+1}^{h,\tt{temp}}, r_{n+1}^{h,\tt{temp}})$ obtained by the above process is the unique solution to the algorithm in \eqref{eq:BE-SAV-Alg-weak}, and the details are omitted here.
    \end{remark}

    \begin{remark}
        Solving two second-order equations in \eqref{eq:BE-SAV-Alg-weak-eq6} and \eqref{eq:BE-SAV-Alg-weak-eq7} is much more efficient than solving \eqref{eq:BE-SAV-Algebra} directly since the left matrix of the two linear systems in \eqref{eq:BE-SAV-Alg-weak-eq6} and \eqref{eq:BE-SAV-Alg-weak-eq7} becomes a sparse matrix $M + \epsilon^{2} k_{n}^{\tt{BE}} K$.
    \end{remark}

    \subsection{Unconditional Stability in Model Energy}
	We define the discrete model energy for the DLN-SAV algorithm %in \eqref{eq:DLN-SAV-Alg} or 
    \eqref{eq:DLN-SAV-Alg-weak} at time $t_{n}$ as
	\begin{gather}
		\mathcal{E}_{n}^{\tt{SAV}} = \epsilon^2 \begin{Vmatrix}
			\nabla u_{n}^{h} \\
			\nabla u_{n-1}^{h}
		\end{Vmatrix}_{G(\theta)}^{2} + \frac{1+\theta}{2} (r_{n}^{h})^2 + \frac{1-\theta}{2} (r_{n-1}^{h})^2,
        \label{eq:DiscreteEnergy}
	\end{gather}
	and have the following theorem on its unconditional stability with respect to $\mathcal{E}_{n}^{\tt{SAV}}$.

    \begin{theorem}
		The model energy of the variable time-stepping DLN-SAV algorithm \eqref{eq:DLN-SAV-Alg-weak} satisfies:
		\begin{gather}
		\mathcal{E}_{n+1}^{\tt{SAV}} \leq \mathcal{E}_{n}^{\tt{SAV}}, \qquad n = 1,2, \cdots , N-1,
		\label{eq:EN-law-DLN-SAV}
		\end{gather}
		thus the DLN-SAV algorithm is unconditional stable with respect to this model energy.
	\end{theorem}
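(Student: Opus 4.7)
The plan is to mimic the modified-DLN energy argument: test the first equation of \eqref{eq:DLN-SAV-Alg-weak} with $v^{h}=u_{n,\alpha}^{h}$, multiply the second equation by $2\widehat{k}_{n}r_{n,\beta}^{h}$, and add the two resulting identities so that the SAV coupling term cancels exactly. What remains after applying the $G$-stability identity \eqref{eq:G-stab} twice (once to the sequence $\{\nabla u_{n}^{h}\}$ in $L^{2}(\Omega)$, once to the scalar sequence $\{r_{n}^{h}\}$) will be a telescoping difference $\mathcal{E}_{n+1}^{\tt{SAV}}-\mathcal{E}_{n}^{\tt{SAV}}$ equal to an explicit sum of non-positive quantities.

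First, I would recall the algebraic identity $u_{n,\alpha}^{h}=u_{n+1,\theta}^{h}-u_{n,\theta}^{h}$ used already in Section \ref{sec:DLN-CSS}, and set $v^{h}=u_{n,\alpha}^{h}$ in the momentum-type equation of \eqref{eq:DLN-SAV-Alg-weak}. Invoking \eqref{eq:G-stab} on the gradient sequence yields
\begin{align*}
\frac{1}{\widehat{k}_{n}}\|u_{n,\alpha}^{h}\|^{2}
&+ \epsilon^{2}\Bigl(\begin{Vmatrix}\nabla u_{n+1}^{h}\\ \nabla u_{n}^{h}\end{Vmatrix}_{G(\theta)}^{2}
   - \begin{Vmatrix}\nabla u_{n}^{h}\\ \nabla u_{n-1}^{h}\end{Vmatrix}_{G(\theta)}^{2}
   + \Bigl\|\sum_{\ell=0}^{2}\gamma_{\ell}^{(n)}\nabla u_{n-1+\ell}^{h}\Bigr\|^{2}\Bigr)\\
&+ \frac{r_{n,\beta}^{h}}{\sqrt{E(u_{n,\ast}^{h})+C_{0}}}\bigl(f(u_{n,\ast}^{h}),u_{n,\alpha}^{h}\bigr)=0.
\end{align*}
Next I would multiply the SAV equation by $2\widehat{k}_{n}r_{n,\beta}^{h}$ and apply the scalar version of \eqref{eq:G-stab} to $\{r_{n}^{h}\}$, which gives
\begin{align*}
\frac{1+\theta}{2}\bigl((r_{n+1}^{h})^{2}-(r_{n}^{h})^{2}\bigr)
+\frac{1-\theta}{2}\bigl((r_{n}^{h})^{2}-(r_{n-1}^{h})^{2}\bigr)
+2\Bigl(\sum_{\ell=0}^{2}\gamma_{\ell}^{(n)}r_{n-1+\ell}^{h}\Bigr)^{2}
=\frac{r_{n,\beta}^{h}}{\sqrt{E(u_{n,\ast}^{h})+C_{0}}}\bigl(f(u_{n,\ast}^{h}),u_{n,\alpha}^{h}\bigr).
\end{align*}

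Adding these two identities produces an exact cancellation of the SAV coupling term on the right-hand side. Using the definition \eqref{eq:DiscreteEnergy} of $\mathcal{E}_{n}^{\tt{SAV}}$, the leftover telescoping part is precisely $\mathcal{E}_{n+1}^{\tt{SAV}}-\mathcal{E}_{n}^{\tt{SAV}}$, and the remaining terms are
\begin{align*}
\mathcal{E}_{n+1}^{\tt{SAV}}-\mathcal{E}_{n}^{\tt{SAV}}
=-\frac{1}{\widehat{k}_{n}}\|u_{n,\alpha}^{h}\|^{2}
-\epsilon^{2}\Bigl\|\sum_{\ell=0}^{2}\gamma_{\ell}^{(n)}\nabla u_{n-1+\ell}^{h}\Bigr\|^{2}
-2\Bigl(\sum_{\ell=0}^{2}\gamma_{\ell}^{(n)}r_{n-1+\ell}^{h}\Bigr)^{2}\le 0,
\end{align*}
which gives \eqref{eq:EN-law-DLN-SAV} with no restriction on $\widehat{k}_{n}$ or on the step ratios. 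The only mildly subtle point is making sure the scalar $G$-norm factors $(1+\theta)/2,(1-\theta)/2$ appearing in \eqref{eq:DiscreteEnergy} match what comes out of \eqref{eq:G-stab} once we multiply by $2$, which is why $2\widehat{k}_{n}r_{n,\beta}^{h}$ (rather than $\widehat{k}_{n}r_{n,\beta}^{h}$) is the correct multiplier for the SAV equation; this bookkeeping is the main thing to get right, but otherwise the argument is a direct double application of the $G$-stability identity and no coercivity of $f$ or regularity of $u^{h}$ is needed.
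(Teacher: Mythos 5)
Your proposal is correct and follows essentially the same route as the paper: test the first equation with $v^{h}=u_{n,\alpha}^{h}$, multiply the scalar SAV equation by $2\widehat{k}_{n}r_{n,\beta}^{h}$, cancel the coupling term, and apply the $G$-stability identity \eqref{eq:G-stab} to both the gradient sequence and the scalar sequence, with the factor of $2$ correctly matching the weights $(1+\theta)/2$, $(1-\theta)/2$ in \eqref{eq:DiscreteEnergy}. No gaps.
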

    \begin{proof}
        We set $v^{h} = u_{n,\alpha}^{h}$ in the first equation of the DLN-SAV algorithm \eqref{eq:DLN-SAV-Alg-weak}
		\begin{gather}
			\frac{1}{\widehat{k}_{n}} \| u_{n,\alpha}^{h} \|^{2}
			\!+\! \epsilon^{2} \big(\nabla u_{n,\beta}^{h}, \nabla u_{n,\alpha}^{h} \big)
			+ \frac{r_{n,\beta}^{h}}{\sqrt{E \big( u_{n,\ast}^{h} \big) + C_0 }} \big( f (u_{n,\ast}^{h}), u_{n,\alpha}^{h} \big)
			= 0. 
			\label{eq:EN-DLN-SAV-Alg-Eq1}
		\end{gather}
		We multiply the second equation of the DLN-SAV algorithm \eqref{eq:DLN-SAV-Alg-weak} by $2 \widehat{k}_{n} r_{n,\beta}$ 
		and obtain
		\begin{gather}
			2 r_{n,\alpha}^{h} r_{n,\beta}^{h} 
			= \frac{r_{n,\beta}^{h}}{\sqrt{E \big( u_{n,\ast}^{h} \big) + C_0 }} \big( f (u_{n,\ast}^{h}), u_{n,\alpha}^{h} \big).
			\label{eq:EN-DLN-SAV-Alg-Eq2}
		\end{gather}
		We combine \eqref{eq:EN-DLN-SAV-Alg-Eq1} and \eqref{eq:EN-DLN-SAV-Alg-Eq2} and use the $G$-stability identity \eqref{eq:G-stab} to obtain
		\begin{gather*}
			\frac{1}{\widehat{k}_{n}} \! \| u_{n,\alpha}^{h} \|^{2} + \mathcal{E}_{n+1}^{\tt{SAV}} - \mathcal{E}_{n}^{\tt{SAV}}
			\!+\! {\epsilon }^2 \Big\| \nabla \! \Big(\sum_{\ell \!=\!0}^{2}{\gamma_{\ell }^{(n)}}u_{n\!-\!1\!+\!\ell}^{h} \Big) \! \Big\|^{2}
			\!+\! 2 \Big(\sum_{\ell \!=\!0}^{2}{\gamma_{\ell }^{(n)}} r_{n\!-\!1\!+\!\ell}^{h} \Big)^{2} \! = \! 0, 
		\end{gather*}
		which results in the unconditional stability property \eqref{eq:EN-law-DLN-SAV}. 

    \end{proof}
    \begin{remark}
		The inequality in \eqref{eq:EN-law-DLN-SAV} is the discrete version of the energy dissipation law \eqref{eq:EN-dissipation-Law} % since the fixed posive real number $C_{0}$ can be \textbf{arbitrarily small}. 
        and the discrete energy $\mathcal{E}_{n}^{\tt{SAV}}$ in \eqref{eq:DiscreteEnergy} is an approximation of the model energy $\mathcal{E}$. % at time $t_{n+1}$. {\color{red} [YX: $t_{n+1}$ or $t_{n,\alpha}$?]}
	\end{remark}

    \section{Time Adaptivity}
	\label{sec:Adaptivity}
	In this section, we discuss adaptive DLN algorithms for solving the Allen-Cahn equation, to better take advantage of this variable time stepping method. Here we consider adaptive DLN algorithms by using the LTE criterion, which involve two essential parts:
	\begin{itemize}
		\item[$\bullet$] the estimator for LTE,
		\item[$\bullet$] the time step controller.
	\end{itemize}

    An explicit, variable step AB2-like scheme\footnote[1]{The deviation of this scheme is similar to two-step Adam-Bashforth scheme, thus we name it an AB2-like scheme.} is adopted to estimate the LTE of the DLN method. The AB2-like time-stepping scheme, applying to the initial value problem in \eqref{eq:IVP}, takes the form
	\begin{align}
		y_{n\!+\!1}^{\tt{AB2}\text{-}\tt{like}} \!&=\! y_{n} \!+\! \frac{t_{n\!+\!1} \!-\!t_{n}}{2 (t_{n\!-\!1,\beta} \!-\!t_{n\!-\!2,\beta})} 
		\Big[ (t_{n\!+\!1} \!+\!t_{n} \!-\!2 t_{n\!-\!2,\beta} )  g^{\tt{DLN}} (t_{n\!-\!1,\beta}, y_{n\!-\!1,\beta})   
		\label{eq:AB2-like} \\
		& \qquad \qquad \qquad \qquad \qquad \quad 
		\!-\! (t_{n\!+\!1} \!+\!t_{n} \!-\! 2 t_{n\!-\!1,\beta} ) g^{\tt{DLN}} (t_{n\!-\!2,\beta}, y_{n\!-\!2,\beta})  \Big], \notag 
	\end{align}
	where $g^{\tt{DLN}}(t_{n\!-\!1,\beta}, y_{n\!-\!1,\beta}) $ and $g^{\tt{DLN}} (t_{n\!-\!2,\beta}, y_{n\!-\!2,\beta})$ are calculated by the DLN scheme in \eqref{eq:1legDLN} 
	\begin{align*}
		g^{\tt{DLN}}(t_{n\!-\!1,\beta}, y_{n\!-\!1,\beta}) = \frac{y_{n-1,\alpha}}{\widehat{k}_{n-1}}, \qquad 
		g^{\tt{DLN}}(t_{n\!-\!2,\beta}, y_{n\!-\!2,\beta}) = \frac{y_{n-2,\alpha}}{\widehat{k}_{n-2}}.
	\end{align*}
	Thus the AB2-like solution $y_{n\!+\!1}^{\tt{AB2}\text{-}\tt{like}} $ in \eqref{eq:AB2-like} is just the linear combination of the previous four DLN solutions $\{ y_{n}, y_{n-1}, y_{n-2}, y_{n-3} \}$, %{\color{red} [YX: why to use 4 solutions for 2nd order method?]} 
    and thus obtained with low computational costs. 

    We denote time step ratio $\tau_{n} = k_{n}/k_{n-1}$ and utilize the AB2-like scheme \eqref{eq:AB2-like} to estimate the LTE of the DLN method via
	\begin{gather}
		\widehat{T}_{n+1}
		=\frac{-G^{(n)}}{G^{(n)}+\mathcal{R}^{(n)}} \big( y_{n+1}^{\tt{DLN}} - y_{n+1}^{\tt{AB2}\text{-}\tt{like}} \big),
		\label{eq:Estimator-LTE}
	\end{gather}
	where $G^{(n)}$, $\mathcal{R}^{(n)}$ are coefficients before the LTEs of the DLN scheme and AB2-like scheme respectively: 
	\begin{align}
		G^{(n)} \!=\!& \Big( \frac{1}{2} \!-\! \frac{ \alpha_{0}}{2 \alpha_{2}} \frac{1}{\tau_{n}} \Big)
		\Big( \beta_{2}^{(n)} \!-\! \beta_{0}^{(n)} \frac{1}{\tau_{n}}  \Big)^{2} 
		\!+\! \frac{ \alpha_{0}}{6 \alpha_{2}} \Big( \frac{1}{\tau_{n}} \Big)^{3} \!-\! \frac{1}{6}, 
		\label{eq:EST-LTE-coeffi}\\
		\mathcal{R}^{(n)} \!=\! & \frac{1}{12} \Big[ 2 \!+\! \frac{3}{\tau_n} \Big( 1 \!-\! \beta_2^{(n\!-\!2)} \frac{1}{\tau_{n\!-\!1}} \!+\!\beta_0^{(n\!-\! 2)} \frac{1}{\tau_{n\!-\!2}} \frac{1}{\tau_{n\!-\!1}} \Big) \!
		\Big( 1\!-\!\beta_2^{(n\!-\!1)} \frac{1}{\tau_n}\!+\!\beta_0^{(n\!-\!1)} \frac{1}{\tau_{n\!-\!1}} \frac{1}{\tau_n} \Big) \notag \\
		&\!+\! \frac{3}{\tau_n} \! \Big( 1\!+\!\frac{1}{\tau_n} \!-\!\beta_2^{(n\!-\!2)} \frac{1}{\tau_{n\!-\!1}} \frac{1}{\tau_n}\!+\!\beta_0^{(n\!-\!2)} \frac{1}{\tau_{n\!-\!2}} \frac{1}{\tau_{n\!-\!1}} \frac{1}{\tau_n} \Big)
		\!\Big(\! -\beta_2^{(n\!-\!1)} \!+\! \beta_0^{(n\!-\!1)} \frac{1}{\tau_{n\!-\!1}} \Big) \Big]. \notag 
	\end{align}
    We refer to \cite{LPT23_ACSE} for the derivation of the explicit AB2-like scheme in \eqref{eq:AB2-like} and the estimator of LTE in \eqref{eq:Estimator-LTE}.

    For the time step controller, there are many choices and we consider the following one proposed in \cite{HNW93II}
    \begin{gather}
    	k_{n+1} = k_{n} \cdot \min \Big\{ 1.5, \max \Big\{ 0.2, \kappa \big( {\tt{Tol}}/\| \widehat{T}_{n+1} \|  \big)^{\frac{1}{3}} \Big\} \Big\},
    	\label{eq:improve-controller}
    \end{gather}
    where $\kappa \in (0, 1]$ is the safety factor and ${\tt{Tol}}$ is the required tolerance for the LTE.
    We summarize the adaptive DLN algorithm using the estimator of LTE in \eqref{eq:Estimator-LTE} and the step controller in \eqref{eq:improve-controller} in the following pseudo code.
    \LinesNumberedHidden
    \begin{algorithm}[ptbh]
    	\caption{Adaptive DLN method (LTE estimator by AB2-like scheme)}
    	\label{alg:Adaptivity-AB2-like}
    	\KwIn{tolerance $\text{Tol}$, four previous DLN solutions $u_{n}^{h},u_{n-1}^{h},u_{n-2}^{h},u_{n-3}^{h}$,
    		current time step $k_{n}$, three previous time step $k_{n-1},k_{n-2},k_{n-3}$, safety factor $\kappa$ \;}
    	Compute the DLN solution $u_{n+1}^{h,\tt DLN}$ by \eqref{eq:DLN-CSS-Alg} or \eqref{eq:DLN-SAV-Alg-weak} \;
    	Compute the AB2-like solution $u_{n+1}^{h,\tt{AB2}\text{-}\tt{like}}$ by \eqref{eq:AB2-like} \;
    	Use $k_{n},k_{n-1},k_{n-2},k_{n-3}$ to update $\tau_{n},\tau_{n-1},\tau_{n-2}$ \;
    	Compute $G^{(n)}$, ${\cal R}^{(n)}$ by \eqref{eq:EST-LTE-coeffi} \;
    	$\widehat{T}_{n+1} \Leftarrow \frac{ |G^{(n)}|}{ | G^{(n)} + {\cal R}^{(n)} | } 
    	\| u_{n+1}^{h,\tt DLN}-u_{n+1}^{h,\tt{AB2}\text{-}\tt{like}} \|$  \tcp*{absolute estimator}
    	or $\widehat{T}_{n+1} \Leftarrow \frac{ |G^{(n)}|}{ | G^{(n)} + {\cal R}^{(n)} | } 
    	\frac{\| u_{n+1}^{h,\tt DLN} - u_{n+1}^{h,\tt{AB2}\text{-}\tt{like}} \|}{\| u_{n+1}^{h,\tt DLN} \|}$    \tcp*{relative estimator}
    	\uIf{$ \widehat{T}_{n+1}  < \rm{Tol}$}
    	{
    		$u_{n+1}^{h} \Leftarrow u_{n+1}^{h,\tt DLN}$  \tcp*{accept the result}
    		$k_{n\!+\!1} \!\Leftarrow \! k_{n} \cdot \min \big\{ \!1.5, \max \big\{\!0.2, \kappa \big(\!\frac {\text{Tol}}{ \widehat{T}_{n+1}  }\!\big)^{1/3} \big\} \!\big\}$  
    		 \tcp*{adjust step by \eqref{eq:improve-controller}}
    	}\Else
    	{
    		\tt{// adjust current step to recompute}  
    		$k_{n} \!\Leftarrow \! k_{n} \cdot \min \big\{ 1.5, \max \big\{0.2, \kappa \big(\frac {\text{Tol}}{ \widehat{T}_{n+1}  }\big)^{1/3} \big\} \!\big\}$ \!\;
    	}
    \end{algorithm}

    \section{Numerical Tests}
	\label{sec:NumercalTests}
	In this section, we test the performance of the proposed modified DLN algorithm and DLN-SAV algorithm on three numerical tests: 
	\begin{itemize}
		\item[$\bullet$] Accuracy test on the 1D traveling wave solution;
		\item[$\bullet$] Accuracy test on the 2D test with known solutions;
		\item[$\bullet$] Simulation of the 2D test with random initial conditions.
	\end{itemize}
	We implement these algorithms with three different values of $\theta$: $2/3,2/\sqrt{5}, 1$.
	Among these values, $\theta = 2/3$ is proposed in \cite{DLN83_SIAM_JNA} to balance the magnitude of LTE and keep fine stability properties. 
	The value $\theta = 2/\sqrt{5}$ is suggested in \cite{KS05} to have the best stability at infinity. 
	The algorithm with value $\theta = 1$ reduces to the classical one-step midpoint rule.
	In the implementation, we use MATLAB for the 1D test problem and the software FreeFem++ \cite{Hec12_JNM} for two 2D test problems. For the modified DLN algorithm \eqref{eq:DLN-CSS-Alg}, we use fixed point iteration to solve the non-linear system at each time step.

    \subsection{One-dimensional Traveling Wave Solution}
    \label{sec:NumTest:1D}
	We use the 1D traveling wave problem \cite{CLJK09_PhyA} with the known solution to test the accuracy of modified DLN and DLN-SAV algorithms.
	One traveling wave solution of Allen-Cahn equation \eqref{eq:AllenCahn_Eq} in 1D takes the form
	\begin{gather*}
		u(x,t) = \frac{1}{2} \Big[ 1 - \tanh\Big( \frac{x-st}{2\sqrt{2} \epsilon}   \Big)   \Big], \qquad
		-2 \leq x \leq 4,
	\end{gather*}
	where the traveling speed is $s = 3 \epsilon /\sqrt{2}$. 
	We set the final time as $T =2$, the model parameter as $\epsilon = 0.01$, and use the inhomogeneous Dirichlet boundary condition. We use $\mathbb{P}^2$ finite element space in the spatial discretization.

    \subsubsection{Constant Time Step}
	First, we consider the constant time step $k$ and set $h = {k}^{2}$ to check the accuracy of both algorithms in time.
	From Tables \ref{table:DLN-CSS-1D-dt} and \ref{table:DLN-SAV-1D-dt}, we observe that both modified DLN and DLN-SAV algorithms have second-order accuracy in time for the 1D traveling wave problem, confirming the expected time-stepping accuracy of these algorithms.
	Next, we set $k = h^{2}$ to check the accuracy of both algorithms in space. 
    From Tables \ref{table:DLN-CSS-1D-h} and \ref{table:DLN-SAV-1D-h},  we can see that both algorithms demonstrate third-order spatial convergence for $\ell^{\infty}({L^{2}})$-norm and second-order spatial convergence for $\ell^{2}(H^{1})$-norm.
    \begin{table}[ptbh]
        \centering
        \renewcommand\arraystretch{1.25}
        \caption{$L^{2}$-norm of error and rate for 1D modified DLN scheme in time ($h \!=\! {k} ^2$) } 
        \begin{tabular}{ccccccccc}
          \hline
          \hline
          & $\!k \!$  & $\!\| u \!-\! u^{h} \|_{\ell^{\infty}(L^{2})}\!$ & $\!R\!$
          & $\!\| u \!-\! u^{h} \|_{\ell^{2}(L^{2})}\!$ & $\!R\!$ 
          & $\!\| u \!-\! u^{h} \|_{\ell^{2}(H^{1})}\!$ & $\!R\!$
          \Tstrut
          \\
          \hline 
          \hline 
          $\!\!\!\!\!\theta \!=\! \frac{2}{3}$  % & $0.1$       & 1.17e-4   & -        & 1.08e-4   & -        & 3.32e-2    & -
          % \Tstrut
          % \\
          % & $0.05$     & 2.85e-5   & 2.03   & 2.43e-5   & 2.15   & 2.31e-3    & 3.85
          % \\
          & $0.04$     & 1.84e-5   & -   & 1.56e-5   & -   & 1.05e-3    & -
          \\
          & $0.02$     & 4.64e-6   & 1.98   & 3.92e-6   & 1.99   & 1.58e-4    & 2.73 
          \\
          & $0.01$     & 1.17e-6   & 1.99   & 9.82e-7   & 2.00   & 3.74e-5    & 2.08 
          \\
          & $0.005$     & 2.92e-7   & 2.00   & 2.46e-7   & 2.00   & 9.30e-6    & 2.01 
          \\
          \hline
          \hline 
          $\!\!\!\!\!\theta \!=\! \frac{2}{\sqrt{5}}$ % & $0.1$  & 9.39e-5   & -        & 8.59e-5   & -        & 3.31e-2    & -
          % \Tstrut
          % \\
          % & $0.05$     & 2.26e-5   & 2.06   & 1.78e-5   & 2.27   & 2.18e-3    & 3.92
          % \\
          & $0.04$     & 1.45e-5   & -   & 1.14e-5   & -   & 9.38e-4    & -
          \\
          & $0.02$     & 3.68e-6   & 1.98   & 2.88e-6   & 1.99   & 1.05e-4    & 3.16
          \\
          & $0.01$     & 9.27e-7   & 1.99   & 7.23e-7   & 1.99   & 2.27e-5    & 2.21
          \\
          & $0.005$     & 2.32e-7   & 2.00   & 1.81e-7   & 2.00   & 5.61e-6    & 2.02
          \\
          \hline
          \hline 
          $\!\!\!\!\!\theta \!=\! 1$ % & $0.1$       & 8.34e-5     & -        & 7.67e-5   & -          & 3.30e-2    & -
          % \Tstrut
          % \\
          % & $0.05$     & 1.97e-5     & 2.08   & 1.49e-5   & 2.36     & 2.16e-3    & 3.93
          % \\
          & $0.04$     & 1.27e-5     & -   & 9.58e-6   & -     & 9.19e-4    & -
          \\
          & $0.02$     & 3.22e-6     & 1.98   & 2.42e-6   & 1.99     & 9.45e-5    & 3.28
          \\
          & $0.01$     & 8.12e-7     & 1.99   & 6.07e-7   & 2.00     & 1.97e-5    & 2.26
          \\
          & $0.005$     & 2.04e-7   & 1.99   & 1.52e-7   & 2.00   & 4.86e-6    & 2.02
          \\
          \hline
          \hline
        \end{tabular}
        \label{table:DLN-CSS-1D-dt}
    \end{table}

    \begin{table}[ptbh]
		\centering
		\renewcommand\arraystretch{1.25}
		\caption{$L^{2}$-norm of error and rate for 1D modified DLN scheme in space (${k} \!=\! h^2$)} 
		\begin{tabular}{ccccccccc}
			\hline
			\hline
			& $\!h\! $  & $\!\| u \!-\! u^{h} \|_{\ell^{\infty}(L^{2})}\!$ & $\!R\!$
			& $\!\| u \!-\! u^{h} \|_{\ell^{2}(L^{2})}\!$ & $\!R\!$ 
			& $\!\| u \!-\! u^{h} \|_{\ell^{2}(H^{1})}\!$ & $\!R\!$
			\Tstrut
			\\
			\hline
			\hline 
			$\!\!\!\!\! \theta \!=\! \frac{2}{3}$ & $0.04$       & 2.17e-3   & -         & 2.41e-3   & -        & 4.80e-1   & -
			\Tstrut
			\\
			& $0.02$       & 2.91e-4   & 2.90    & 3.97e-4   & 2.60   & 1.32e-1   & 1.86
			\\
			& $0.01$       & 3.69e-5   & 2.98    & 5.19e-5   & 2.94   & 3.39e-2   & 1.97
			\\
			& $0.005$     & 4.65e-6   & 2.99    & 6.57e-6   & 2.98   & 8.53e-3   & 1.99
			\\
			\hline
			\hline 
			$\!\!\!\!\! \theta \!=\! \frac{2}{\sqrt{5}}$ & $0.04$       & 2.17e-3    & -         & 2.41e-3   & -        & 4.80e-1   & -
			\Tstrut
			\\
			& $0.02$       & 2.91e-4    & 2.90    & 3.97e-4   & 2.60   & 1.32e-1   & 1.86
			\\
			& $0.01$       & 3.69e-5    & 2.98    & 5.19e-5   & 2.94   & 3.39e-2   & 1.97
			\\
			& $0.005$     & 4.65e-6    & 2.99    & 6.57e-6   & 2.98   & 8.53e-3   & 1.99
			\\
			\hline
			\hline 
			$\!\!\!\!\! \theta \!=\! 1$ & $0.04$       & 2.17e-3    & -         & 2.41e-3   & -        & 4.80e-1   & -
			\\
			& $0.02$       & 2.91e-4    & 2.90    & 3.97e-4   & 2.60   & 1.32e-1   & 1.86
			\\
			& $0.01$       & 3.69e-5    & 2.98    & 5.19e-5   & 2.94   & 3.39e-2   & 1.97
			\\
			& $0.005$     & 4.65e-6    & 2.99    & 6.57e-6   & 2.98   & 8.53e-3   & 1.99
			\\
			\hline
			\hline
		\end{tabular}
		\label{table:DLN-CSS-1D-h}
	\end{table}

    \begin{table}[ptbh]
		\centering
		\renewcommand\arraystretch{1.25}
		\caption{$L^{2}$-norm of error and rate for 1D DLN-SAV scheme in time ($h \!=\! {k} ^2$)} 
		\begin{tabular}{ccccccccc}
			\hline
			\hline
			& $\!k \!$  & $\!\| u \!-\! u^{h} \|_{\ell^{\infty}(L^{2})}\!$ & $\!R\!$
			& $\!\| u \!-\! u^{h} \|_{\ell^{2}(L^{2})}\!$ & $\!R\!$ 
			& $\!\| u \!-\! u^{h} \|_{\ell^{2}(H^{1})}\!$ & $\!R\!$
			\Tstrut
			\\
			\hline
			\hline
			$\!\!\!\!\!\theta \!=\! \frac{2}{3}$ & $0.04$       & 4.94e-5   & -          & 3.82e-5   & -       & 2.09e-3   & -
			\Tstrut
			\\
			& $0.02$     & 1.26e-5   & 1.98     & 9.64e-6   & 1.98  & 4.82e-4   & 2.12
			\\
			& $0.01$     & 3.17e-6   & 1.99     & 2.42e-6   & 1.99  & 1.20e-4   & 2.00
			\\
			& $0.005$     & 7.94e-7   & 1.99     & 6.07e-7   & 2.00  & 3.01e-5   & 2.00
			\\
			\hline
			\hline
			$\!\!\!\!\!\theta \!=\! \frac{2}{\sqrt{5}}$ & $0.04$      & 6.05e-5    & -        & 4.61e-5   & -        & 2.07e-3   & -
			\Tstrut
			\\
			& $0.02$     & 1.54e-5   & 1.97   & 1.17e-5   & 1.98   & 4.77e-4   & 2.11
			\\
			& $0.01$     & 3.90e-6   & 1.99   & 2.94e-6   & 1.99   & 1.19e-4   & 2.00
			\\
			& $0.005$     & 9.79e-7   & 1.99   & 7.38e-7   & 2.00   & 2.99e-5   & 2.00
			\\
			\hline
			\hline
			$\!\!\!\!\!\theta \!=\! 1$ & $0.04$       & 6.61e-5   & -         & 5.03e-5   & -        & 2.05e-3    & -
			\Tstrut
			\\
			& $0.02$     & 1.69e-5   & 1.97    & 1.28e-5   & 1.98   & 4.74e-4    & 2.11
			\\
			& $0.001$     & 4.26e-6   & 1.99    & 3.21e-6   & 1.99   & 1.18e-4    & 2.00
			\\
			& $0.005$     & 1.07e-6   & 1.99    & 8.06e-7   & 1.99   & 2.97e-5    & 2.00
			\\
			\hline
			\hline
		\end{tabular}
		\label{table:DLN-SAV-1D-dt}
	\end{table}

    \begin{table}[ptbh]
		\centering
		\renewcommand\arraystretch{1.25}
		\caption{$L^{2}$-norm of error and rate for 1D DLN-SAV scheme in space (${k} \!=\! h^2$)} 
		\begin{tabular}{cccccccc}
			\hline
			\hline
			& $\!h\! $  & $\!\| u \!-\! u^{h} \|_{\ell^{\infty}(L^{2})}\!$ & $\!R\!$
			& $\!\| u \!-\! u^{h} \|_{\ell^{2}(L^{2})}\!$ & $\!R\!$ 
			& $\!\| u \!-\! u^{h} \|_{\ell^{2}(H^{1})}\!$ & $\!R\!$
			\Tstrut
			\\
			\hline
			\hline
			$\!\!\!\!\!\theta \!=\! \frac{2}{3}$ & $0.04$      & 2.17e-3    & -         & 2.41e-3   & -        & 4.80e-1   & -
			\Tstrut
			\\
			& $0.02$      & 2.91e-4    & 2.90    & 3.97e-4   & 2.60   & 1.32e-1   & 1.86
			\\
			& $0.01$      & 3.69e-5    & 2.98    & 5.19e-5   & 2.94   & 3.39e-2   & 1.97
			\\
			& $0.005$     & 4.65e-6   & 2.99    & 6.57e-6   & 2.98   & 8.53e-3   & 1.99
			\\
			\hline
			\hline
			$\!\!\!\!\! \theta \!=\! \frac{2}{\sqrt{5}}$ & $0.04$       & 2.17e-3   & -         & 2.41e-3   & -        & 4.80e-1   & -
			\Tstrut
			\\
			& $0.02$       & 2.91e-4   & 2.90    & 3.97e-4   & 2.60   & 1.32e-1   & 1.86
			\\
			& $0.01$       & 3.69e-5   & 2.98    & 5.19e-5   & 2.94   & 3.39e-2   & 1.97
			\\
			& $0.005$     & 4.65e-6   & 2.99    & 6.57e-6   & 2.98   & 8.53e-3   & 1.99
			\\
			\hline
			\hline
			$\!\!\!\!\! \theta \!=\! 1$ & $0.04$       & 2.17e-3   & -         & 2.41e-3   & -        & 4.80e-1   & -
			\Tstrut
			\\
			& $0.02$       & 2.91e-4   & 2.90    & 3.97e-4   & 2.60   & 1.32e-1   & 1.86
			\\
			& $0.01$       & 3.69e-5   & 2.98    & 5.19e-5   & 2.94   & 3.39e-2   & 1.97
			\\
			& $0.005$     & 4.65e-6   & 2.99    & 6.57e-6   & 2.98   & 8.53e-3   & 1.99
			\\
			\hline
			\hline
		\end{tabular}
		\label{table:DLN-SAV-1D-h}
	\end{table}

    \subsubsection{Variable Time Step}
    We utilize constant time step $k$ as a reference time step and set $h = k^2$ to test the accuracy of the variable time-stepping modified DLN algorithm \eqref{eq:DLN-CSS-Alg} and DLN-SAV algorithm \eqref{eq:DLN-SAV-Alg-weak} over time. Two different time step scenarios are considered:
    \begin{enumerate}
            \item Randomized Time Steps: For each time step, we set $k_n = k + k \cdot rand$, where $rand$ is a random number drawn from the uniform distribution in the interval (0,1). 
            \item Alternating Time Steps: We alternate the time step size as follows: $k_1 = k, k_2 = 2k, k_3 = k, k_4 = 2k, ...$
    \end{enumerate}
    For both test scenarios, the time steps lie within the range $[k, 2k]$. The accuracy rate is calculated using the formula:
    \begin{equation*}
        \text{Rate} = \frac{\ln(\text{Error}_1/\text{Error}_2)}{\ln(k_{\text{max},1}/k_{\text{max},2})}
    \end{equation*}
    where \(\text{Error}_1\) and \(\text{Error}_2\) are the errors corresponding to the maximum time steps \(k_{\text{max},1}\) and \(k_{\text{max},2}\), respectively.

    From Tables \ref{table:DLN-CSS-1D-random-t} - \ref{table:DLN-SAV-1D-alternating-t}, our observation indicates that both algorithms with all $\theta$ values achieve second-order accuracy in time for the 1D traveling wave problem, even under non-uniform time steps. This is better than the first order convergence in time that we can prove in Theorem \ref{theorem2} for the non-uniform time step case. 

    To further test the order of accuracy of the modified DLN scheme, we apply the modified DLN method on the corresponding ODE ${dy}/{dt} + f(y) = 0$, assuming $y=y(t)$ is only time dependent.
    We have carried out extensive numerical investigation of this test with different ways to determine the non-uniform time step size and different choices of $f(u)$.
    All of these tests demonstrate a second-order convergence numerically.

    \begin{table}[ptbh]
		\centering
		\renewcommand\arraystretch{1.25}
		\caption{$L^{2}$-norm of error and rate for 1D modified DLN scheme in time ($k_n = k + k \cdot rand; h \!=\! k^2$)} 
		\begin{tabular}{ccccccccc}
			\hline
			\hline
			& $k$ & $\!k_{\text{max}}\! $  & $\!\| u \!-\! u^{h} \|_{\ell^{\infty}(L^{2})}\!$ & $\!R\!$
			& $\!\| u \!-\! u^{h} \|_{\ell^{2}(L^{2})}\!$ & $\!R\!$ 
			& $\!\| u \!-\! u^{h} \|_{\ell^{2}(H^{1})}\!$ & $\!R\!$
			\Tstrut
			\\
			\hline
			\hline
			$\!\!\!\!\!\theta \!=\! \frac{2}{3}$ & $0.1$ & 0.1919  & 2.20e-4   & -    & 2.06e-4    & -         & 3.45e-2   & - 
			\Tstrut
			\\
			& $0.05$ & 0.0991  & 5.98e-5   & 1.97   & 5.33e-5   & 2.04    & 3.45e-3   & 3.48   
			\\
			& $0.04$ & 0.0771  & 3.36e-5   & 2.31    & 3.02e-5   & 2.28   & 1.80e-3   & 2.61
			\\
			& $0.02$ & 0.0399 & 1.01e-5   & 1.82    & 8.58e-6   & 1.91   & 4.50e-4   & 2.10
			\\
			\hline
			\hline
			$\!\!\!\!\! \theta \!=\! \frac{2}{\sqrt{5}}$ & $0.1$ & 0.1995  & 1.85e-4   & -         & 1.48e-4   & -        & 3.34e-2  & -
			\Tstrut
			\\
			& $0.05$ & 0.0992 & 5.61e-5   & 1.71    & 4.57e-5   & 1.68   & 2.71e-3   & 3.60
			\\
			& $0.04$ & 0.0789  & 3.56e-5   & 1.99    & 2.72e-5   & 2.26   & 1.32e-3   & 3.13
			\\
			& $0.02$ & 0.0398  & 8.16e-6   & 2.15    & 6.37e-6   & 2.12   & 2.37e-4   & 2.51
			\\
			\hline
			\hline
			$\!\!\!\!\! \theta \!=\! 1$ & $0.1$  & 0.1738  & 1.53e-4   & -         & 1.30e-4   & -        & 3.32e-2   & -
			\Tstrut
			\\
			& $0.05$ & 0.0994  & 4.81e-5   & 2.07    & 3.55e-5   & 2.32   & 2.40e-3   & 4.70
			\\
			& $0.04$ & 0.0787  & 3.01e-5  & 2.00    & 2.28e-5   & 1.91   & 1.15e-3   & 3.22
			\\
			& $0.02$ & 0.0391  & 7.86e-6   & 1.92    & 6.03e-6  & 1.90   & 2.02e-4   & 2.47
			\\
			\hline
			\hline
		\end{tabular}
		\label{table:DLN-CSS-1D-random-t}
	\end{table}

    \begin{table}[ptbh]
		\centering
		\renewcommand\arraystretch{1.25}
		\caption{$L^{2}$-norm of error and rate for 1D modified DLN scheme in time ($k_n = k, 2k, k, 2k,...; h \!=\! k^2$)} 
		\begin{tabular}{ccccccccc}
			\hline
			\hline
			& $k$ & $\!k_{max}\! $  & $\!\| u \!-\! u^{h} \|_{\ell^{\infty}(L^{2})}\!$ & $\!R\!$
			& $\!\| u \!-\! u^{h} \|_{\ell^{2}(L^{2})}\!$ & $\!R\!$ 
			& $\!\| u \!-\! u^{h} \|_{\ell^{2}(H^{1})}\!$ & $\!R\!$
			\Tstrut
			\\
			\hline
			\hline
			$\!\!\!\!\!\theta \!=\! \frac{2}{3}$ & $0.1$ & 0.2 & 2.32e-4   & -    & 2.51e-4    & -         & 3.56e-2   & - 
			\Tstrut
			\\
			& $0.05$ & 0.1  & 6.20e-5   & 1.90   & 6.26e-5   & 2.00    & 4.04e-3   & 3.14   
			\\
			& $0.04$ & 0.08  & 3.89e-5   & 2.08    & 4.00e-5   & 2.01   & 2.37e-3   & 2.40
			\\
			& $0.02$ & 0.04 & 1.01e-5   & 1.95    & 1.00e-5   & 1.99   & 5.60e-4   & 2.08
			\\
			\hline
			\hline
			$\!\!\!\!\! \theta \!=\! \frac{2}{\sqrt{5}}$ & $0.1$ & 0.2  & 2.28e-4   & -         & 1.96e-4   & -        & 3.37e-2  & -
			\Tstrut
			\\
			& $0.05$ & 0.1 & 5.97e-5   & 1.93    & 4.75e-5   & 2.05   & 2.76e-3   & 3.61
			\\
			& $0.04$ & 0.08  & 3.79e-5   & 2.03    & 3.03e-5   & 2.01   & 1.43e-3   & 2.95
			\\
			& $0.02$ & 0.04  & 9.71e-6  & 1.97    & 7.54e-6   & 2.01   & 2.86e-4   & 2.32
			\\
			\hline
			\hline
			$\!\!\!\!\! \theta \!=\! 1$ & $0.1$  & 0.2  & 2.28e-4   & -         & 1.91e-4   & -        & 3.34e-2   & -
			\Tstrut
			\\
			& $0.05$ & 0.1  & 5.95e-5   & 1.93    & 4.62e-5  & 2.05   & 2.57e-3   & 3.70
			\\
			& $0.04$ & 0.08  & 3.80e-5  & 2.02    & 2.95e-5   & 2.02  & 1.28e-3  & 3.13
			\\
			& $0.02$ & 0.04  & 9.70e-6   & 1.97    & 7.35e-6  & 2.00  & 2.41e-4  & 2.41
			\\
			\hline
			\hline
		\end{tabular}
		\label{table:DLN-CSS-1D-alternating-t}
	\end{table}

    \begin{table}[ptbh]
		\centering
		\renewcommand\arraystretch{1.25}
		\caption{$L^{2}$-norm of error and rate for 1D DLN-SAV scheme in time ($k_n = k + k \cdot rand; h \!=\! k^2$)} 
		\begin{tabular}{ccccccccc}
			\hline
			\hline
			& $k$ & $\!k_{\text{max}}\! $  & $\!\| u \!-\! u^{h} \|_{\ell^{\infty}(L^{2})}\!$ & $\!R\!$
			& $\!\| u \!-\! u^{h} \|_{\ell^{2}(L^{2})}\!$ & $\!R\!$ 
			& $\!\| u \!-\! u^{h} \|_{\ell^{2}(H^{1})}\!$ & $\!R\!$
			\Tstrut
			\\
			\hline
			\hline
			$\!\!\!\!\!\theta \!=\! \frac{2}{3}$ & $0.1$ & 0.1885  & 6.25e-4   & -    & 4.85e-4    & -         & 4.16e-2   & - 
			\Tstrut
			\\
			& $0.05$ & 0.1000  & 1.62e-4   & 2.13   & 1.31e-4   & 2.07    & 6.75e-3   & 2.87   
			\\
			& $0.04$ & 0.0781  & 1.20e-4   & 1.20    & 9.35e-5   & 1.36   & 4.80e-3   & 1.37
			\\
			& $0.02$ & 0.0399 & 3.10e-5   & 2.02    & 2.45e-5   & 1.99   & 1.22e-3   & 2.04
			\\
			\hline
			\hline
			$\!\!\!\!\! \theta \!=\! \frac{2}{\sqrt{5}}$ & $0.1$ & 0.1931  & 7.12e-4   & -   & 5.64e-4   & -        & 4.10e-2  & -
			\Tstrut
			\\
			& $0.05$ & 0.0987 & 2.20e-4   & 1.76    & 1.66e-4   & 1.82   & 7.20e-3   & 2.59
			\\
			& $0.04$ & 0.0744  & 1.28e-4   & 1.89    & 9.68e-5   & 1.91   & 4.05e-3   & 2.03
			\\
			& $0.02$ & 0.0400  & 3.74e-5   & 1.98    & 2.80e-5   & 2.00   & 1.15e-3   & 2.03
			\\
			\hline
			\hline
			$\!\!\!\!\! \theta \!=\! 1$ & $0.1$  & 0.1996  & 8.19e-4   & -   & 6.41e-4   & -     & 4.13e-2   & -
			\Tstrut
			\\
			& $0.05$ & 0.0998  & 2.58e-4   & 1.67    & 2.04e-4   & 1.65   & 7.94e-3   & 2.38
			\\
			& $0.04$ & 0.0770  & 1.38e-4  & 2.42    & 1.05e-4   & 2.57   & 4.04e-3   & 2.62
			\\
			& $0.02$ & 0.0396  & 3.86e-5   & 1.91    & 2.96e-5  & 1.90   & 1.11e-3   & 1.93
			\\
			\hline
			\hline
		\end{tabular}
		\label{table:DLN-SAV-1D-random-t}
	\end{table}

    \begin{table}[ptbh]
		\centering
		\renewcommand\arraystretch{1.25}
		\caption{$L^{2}$-norm of error and rate for 1D DLN-SAV scheme in time ($k_n = k, 2k, k, 2k,...; h \!=\! k^2$)} 
		\begin{tabular}{ccccccccc}
			\hline
			\hline
			& $k$ & $\!k_{\text{max}}\! $  & $\!\| u \!-\! u^{h} \|_{\ell^{\infty}(L^{2})}\!$ & $\!R\!$
			& $\!\| u \!-\! u^{h} \|_{\ell^{2}(L^{2})}\!$ & $\!R\!$ 
			& $\!\| u \!-\! u^{h} \|_{\ell^{2}(H^{1})}\!$ & $\!R\!$
			\Tstrut
			\\
			\hline
			\hline
			$\!\!\!\!\!\theta \!=\! \frac{2}{3}$ & $0.1$ & 0.2  & 6.52e-4   & -    & 5.53e-4    & -         & 4.37e-2   & - 
			\Tstrut
			\\
			& $0.05$ & 0.1  & 1.76e-4   & 1.89   & 1.41e-4   & 1.97    & 7.41e-3   & 2.56   
			\\
			& $0.04$ & 0.08  & 1.14e-4   & 1.96    & 9.07e-5   & 1.99   & 4.64e-3   & 2.10
			\\
			& $0.02$ & 0.04 & 2.94e-5   & 1.95    & 2.29e-5   & 1.99   & 1.15e-3   & 2.01
			\\
			\hline
			\hline
			$\!\!\!\!\! \theta \!=\! \frac{2}{\sqrt{5}}$ & $0.1$ & 0.2  & 7.95e-4   & -   & 6.57e-4   & -        & 4.36e-2  & -
			\Tstrut
			\\
			& $0.05$ & 0.1 & 2.18e-4   & 1.86    & 1.71e-4   & 1.94   & 7.47e-3   & 2.55
			\\
			& $0.04$ & 0.08  & 1.40e-4   & 1.97    & 1.10e-4   & 1.98   & 4.68e-3   & 2.09
			\\
			& $0.02$ & 0.04  & 3.63e-5   & 1.95    & 2.78e-5   & 1.98   & 1.16e-3   & 2.01
			\\
			\hline
			\hline
			$\!\!\!\!\! \theta \!=\! 1$ & $0.1$  & 0.2  & 8.66e-4   & -   & 7.10e-4   & -     & 4.35e-2   & -
			\Tstrut
			\\
			& $0.05$ & 0.1  & 2.38e-4   & 1.86    & 1.86e-4   & 1.93   & 7.45e-3   & 2.54
			\\
			& $0.04$ & 0.08  & 1.54e-4  & 1.97    & 1.20e-4   & 1.97   & 4.67e-3   & 2.09
			\\
			& $0.02$ & 0.04  & 3.98e-5   & 1.95    & 3.04e-5  & 1.98   & 1.16e-3   & 2.01
			\\
			\hline
			\hline
		\end{tabular}
		\label{table:DLN-SAV-1D-alternating-t}
	\end{table}

    \subsubsection{Adaptive Algorithms}
	Next, we test the accuracy and efficiency of the adaptive DLN algorithm in Algorithm \ref{alg:Adaptivity-AB2-like}, by comparing it with the corresponding constant time step algorithms. For adaptive algorithms, the following setup is adopted: the tolerance $\rm{Tol} = 1.\rm{e}-6$, the minimum time step $k_{\rm{min}} = 1.\rm{e}-5$, the maximum time step $k_{\rm{max}} = 0.1$, two initial time steps $k_{0} = k_{1} = 1.\rm{e}-3$, and the safety factor $\kappa = 0.8$. 
    For constant step algorithms, we set the constant time step as $k = 1.\rm{e}-3$. 
    From Tables \ref{table:Adapt-Const-DLN-CSS-1D} and \ref{table:Adapt-Const-DLN-SAV-1D}, we observe that both adaptive algorithms outperform the constant step algorithms. Adaptive algorithms take much fewer number of time steps to obtain the same accuracy for all three $\theta$ values, when compared with the corresponding constant step algorithms.
    \begin{table}[ptbh]
		\centering
		\renewcommand\arraystretch{1.25}
		\caption{Errors and number of time steps of the adaptive and constant time-stepping 1D modified DLN schemes} 
		\begin{tabular}{cccccc}
			\hline
			\hline
			\multicolumn{6}{c}{Adaptive modified DLN schemes}          
			\Tstrut \\
			\hline
			$\!\!\!\!\theta \!$  & $\!\| u \!-\! u^{h} \|_{\ell^{\infty}(L^{2})} \!$  & $\!\| u \!-\! u^{h} \|_{\ell^{2}(L^{2})}\!$ 
			& $\!\| u \!-\! u^{h} \|_{\ell^{2}(H^{1})}\!$ & \# Steps & \# Rejections
			\Tstrut
			\\
			\hline 
			$\!\!\!\!\!\theta \!=\! \frac{2}{3}$                 & 3.69e-5      & 3.67e-5      & 2.39e-2        & 653     &  341
			\Tstrut
			\\
			$\!\!\!\!\! \theta \!=\! \frac{2}{\sqrt{5}}$      & 3.69e-5      & 3.67e-5      & 2.39e-2        & 264     &  93
			\\
			$\!\!\!\!\! \theta \!=\! 1$                              & 3.69e-5      & 3.67e-5      & 2.39e-2        & 148     &  43
			\\
			\hline
			\hline
			\multicolumn{6}{c}{Constant time-stepping modified DLN schemes}          
			\Tstrut \\
			\hline
			$\!\!\!\!\theta \!$  & $\!\| u \!-\! u^{h} \|_{\ell^{\infty}(L^{2})} \!$  & $\!\| u \!-\! u^{h} \|_{\ell^{2}(L^{2})}\!$ 
			& $\!\| u \!-\! u^{h} \|_{\ell^{2}(H^{1})}\!$ & \# Steps & \# Rejections
			\Tstrut
			\\
			\hline
			$\!\!\!\!\!\theta \!=\! \frac{2}{3}$                  & 3.69e-5      & 3.67e-5      & 2.39e-2       & 1000    & -
			\\
			$\!\!\!\!\! \theta \!=\! \frac{2}{\sqrt{5}}$       & 3.69e-5      & 3.67e-5      & 2.39e-2       & 1000    & -
			\\
			$\!\!\!\!\! \theta \!=\! 1$                               & 3.69e-5      & 3.67e-5      & 2.39e-2       & 1000    & - 
			\\
			\hline
		\end{tabular}
		\label{table:Adapt-Const-DLN-CSS-1D}
	\end{table}

    \begin{table}[ptbh]
		\centering
		\renewcommand\arraystretch{1.25}
		\caption{Errors and number of time steps of the adaptive and constant time-stepping 1D DLN-SAV schemes} 
		\begin{tabular}{cccccc}
			\hline
			\hline
			\multicolumn{6}{c}{Adaptive DLN-SAV schemes}          
			\Tstrut \\
			\hline
			$\!\!\!\!\theta \!$  & $\!\| u \!-\! u^{h} \|_{\ell^{\infty}(L^{2})} \!$  & $\!\| u \!-\! u^{h} \|_{\ell^{2}(L^{2})}\!$ 
			& $\!\| u \!-\! u^{h} \|_{\ell^{2}(H^{1})}\!$ & \# Steps & \# Rejections
			\Tstrut
			\\
			\hline
			$\!\!\!\!\!\theta \!=\! \frac{2}{3}$                  & 3.68e-5     & 3.67e-5      & 2.39e-2        & 174    & 65
			\\
			$\!\!\!\!\! \theta \!=\! \frac{2}{\sqrt{5}}$       & 3.67e-5     & 3.67e-5      & 2.39e-2        & 146    & 36
			\\
			$\!\!\!\!\! \theta \!=\! 1$                               & 3.67e-5     & 3.67e-5      & 2.39e-2        & 140    & 43
			\\
			\hline
			\hline
			\multicolumn{6}{c}{Constant time-stepping DLN-SAV schemes}          
			\Tstrut \\
			\hline
			$\!\!\!\!\theta \!$  & $\!\| u \!-\! u^{h} \|_{\ell^{\infty}(L^{2})} \!$  & $\!\| u \!-\! u^{h} \|_{\ell^{2}(L^{2})}\!$ 
			& $\!\| u \!-\! u^{h} \|_{\ell^{2}(H^{1})}\!$ & \# Steps & \# Rejections
			\Tstrut
			\\
			\hline
			$\!\!\!\!\!\theta \!=\! \frac{2}{3}$                 & 3.68e-5     & 3.67e-5      & 2.39e-2     & 1000    & -
			\Tstrut
			\\
			$\!\!\!\!\! \theta \!=\! \frac{2}{\sqrt{5}}$      & 3.68e-5     & 3.67e-5      & 2.39e-2     & 1000    & -
			\\
			$\!\!\!\!\! \theta \!=\! 1$                              & 3.68e-5     & 3.67e-5      & 2.39e-2     & 1000    & -
			\\
			\hline
		\end{tabular}
		\label{table:Adapt-Const-DLN-SAV-1D}
	\end{table}

    \subsection{Two-dimensional Example with Known Solution} 
        We consider a two-dimensional example to verify that the proposed DLN algorithms are accurate and efficient for 2D test problems. 
	As studied in \cite{LHY19_JCAM}, we consider the exact function of the form
	\begin{gather}
		u(x,y,t) = 0.05 e^{-0.1t} \sin(x) \sin(y), \  (x,y) \in  [0,2\pi]\times [0,2\pi], \ 0 \leq t \leq T,
		\label{eq:exact-2D}
	\end{gather}
        with the homogenous boundary condition.
	Since this function does not satisfy the Allen-Cahn equation \eqref{eq:AllenCahn_Eq} exactly, we add the extra source term 
	\begin{gather*}
		g(x,y,t) = 0.05(2\epsilon^2 -1.1) e^{-0.1t} \sin(x) \sin(y) + (0.05 e^{-0.1t} \sin(x) \sin(y))^3
	\end{gather*}
	such that $u(x,y,t)$ in \eqref{eq:exact-2D} is the exact solution to the revised Allen-Cahn equation 
	\begin{gather*}
		u_t - \epsilon^2 \Delta u + f(u) = g(x,y,t).
	\end{gather*}
	with the model parameter $\epsilon = 0.01$. $\mathbb{P}^2$ finite element space is again used.

    \subsubsection{Constant Time Step}
	To confirm the time convergence rate, we first set the final time as $T = 4$ and consider a refined triangular mesh with $N=500$ nodes on each side of the boundaries. 
    Tables \ref{table:DLN-CSS-2D-dt} and \ref{table:DLN-SAV-2D-dt} show that both modified DLN and DLN-SAV algorithms achieved second-order convergence in time for the 2D test problem in \eqref{eq:exact-2D}.
	Then we set the final time $T=1$ and the constant time step $k = 0.01$ to check the spatial convergence. We observe from Tables \ref{table:DLN-CSS-2D-h} and \ref{table:DLN-SAV-2D-h} that both algorithms have third-order accuracy in $L^{2}$-norm and second-order accuracy in $H^{1}$-norm.

    \begin{table}[ptbh]
		\centering
		\renewcommand\arraystretch{1.25}
		\caption{$L^{2}$-norm of error and rate for 2D modified DLN scheme in time ($N = 500$)} 
		\begin{tabular}{ccccccccc}
			\hline
			\hline
			& $\!k \!$  & $\!\| u \!-\! u^{h} \|_{\ell^{\infty}(L^{2})}\!$ & $\!R\!$
			& $\!\| u \!-\! u^{h} \|_{\ell^{2}(L^{2})}\!$ & $\!R\!$ 
			& $\!\| u \!-\! u^{h} \|_{\ell^{2}(H^{1})}\!$ & $\!R\!$
			\Tstrut
			\\
			\hline 
			\hline 
			$\!\!\!\!\!\theta \!=\! \frac{2}{3}$ & $0.4$    & 1.80e-3  & -  &  1.51e-3  & - &  2.13e-3  & -
			\Tstrut \\
			& $0.2$      & 5.25e-4  & 1.78  &  4.03e-4  & 1.90  & 5.70e-4  & 1.90
			\\
			& $0.1$      & 1.43e-4  & 1.88  &  1.05e-4  & 1.95  & 1.48e-4  & 1.95
			\\
			& $0.05$    & 3.72e-5  & 1.94  &  2.66e-5  & 1.97  & 3.79e-5  & 1.97
			\\
			\hline
			\hline 
			$\!\!\!\!\!\theta \!=\! \frac{2}{\sqrt{5}}$ & $0.4$    & 1.31e-3   & -   &  1.10e-3   & -  &   1.55e-3  &  -
			\Tstrut \\
			& $0.2$    & 3.90e-4  & 1.75   & 3.00e-4  & 1.87  &  4.24e-4  &  1.87
			\\
			& $0.1$    & 1.07e-4  & 1.86   & 7.87e-5  & 1.93  &  1.11e-4  &  1.93
			\\
			& $0.05$  & 2.82e-5  & 1.93   & 2.02e-5  & 1.96  &  2.87e-5  &  1.95
			\\
			\hline
			\hline 
			$\!\!\!\!\!\theta \!=\! 1$ & $0.4$    & 1.06e-3  & -  & 8.84e-4   & - &  1.25e-3  & -
			\Tstrut \\
			& $0.2$     &  3.18e-4  &  1.73   & 2.44e-4   & 1.86   &  3.45e-4  &  1.86
			\\
			& $0.1$     & 8.79e-5   &  1.85   & 6.44e-5   & 1.92   &  9.12e-5  &  1.92
			\\
			& $0.05$   & 2.31e-5   &  1.93   & 1.65e-5   & 1.96   &  2.37e-5  &  1.94
			\\
			\hline
			\hline
		\end{tabular}
		\label{table:DLN-CSS-2D-dt}
	\end{table}

    \begin{table}[ptbh]
		\centering
		\renewcommand\arraystretch{1.25}
		\caption{$L^{2}$-norm of error and rate for 2D modified DLN scheme in space ($k=0.01$)} 
		\begin{tabular}{ccccccccc}
			\hline
			\hline
			& $\!N\! $  & $\!\| u \!-\! u^{h} \|_{\ell^{\infty}(L^{2})}\!$ & $\!R\!$
			& $\!\| u \!-\! u^{h} \|_{\ell^{2}(L^{2})}\!$ & $\!R\!$ 
			& $\!\| u \!-\! u^{h} \|_{\ell^{2}(H^{1})}\!$ & $\!R\!$
			\Tstrut \\
			\hline
			\hline 
			$\!\!\!\!\! \theta \!=\! \frac{2}{3}$ & $20$     &  4.98e-5 &  -   & 2.57e-5  & -   &  1.48e-3  & -
			\Tstrut \\
			& $40$      &  6.08e-6   & 3.03   & 3.18e-6  & 3.02  & 3.78e-4  & 1.97
			\\
			& $60$      &  1.51e-6   & 3.43   & 8.03e-7  & 3.39  & 1.59e-4  & 2.14
			\\
			& $80$      &  5.74e-7   & 3.36   & 3.12e-7  & 3.29  & 8.81e-5  & 2.04
			\\
			& $100$    &  2.65e-7   & 3.46   & 1.46e-7  & 3.39  & 5.48e-5  & 2.13
			\\
			\hline
			\hline 
			$\!\!\!\!\! \theta \!=\! \frac{2}{\sqrt{5}}$ & $20$     &  4.97e-5 &  -   & 2.57e-5  & -   &  1.48e-3  & -
			\Tstrut \\
			& $40$      &  6.07e-6  &  3.03  &  3.17e-6  &  3.02  &  3.78e-4  &  1.97
			\\
			& $60$      &  1.51e-6  &  3.43  &  8.02e-7  &  3.39  &  1.59e-4  &  2.14
			\\
			& $80$      &  5.73e-7  &  3.37  &  3.11e-7  &  3.29  &  8.81e-5  &  2.04
			\\
			& $100$    &  2.63e-7  &  3.48  &  1.45e-7  &  3.41  &  5.47e-5  &  2.13
			\\
			\hline
			\hline 
			$\!\!\!\!\! \theta \!=\! 1$ & $20$     &  4.96e-5 &  -   & 2.56e-5  & -   &  1.48e-3  & -
			\Tstrut \\
			& $40$      &  6.06e-6  &  3.03  &  3.17e-6  &  3.02  &  3.78e-4  &  1.97
			\\
			& $60$      &  1.51e-6  &  3.43  &  8.01e-7  &  3.39  &  1.59e-4  &  2.14
			\\
			& $80$      &  5.72e-7  &  3.37  &  3.11e-7  &  3.29  &  8.81e-5  &  2.04
			\\
			& $100$    &  2.63e-7  &  3.49  &  1.45e-7  &  3.42  &  5.47e-5  &  2.13
			\\
			\hline
			\hline
		\end{tabular}
		\label{table:DLN-CSS-2D-h}
	\end{table}

    \begin{table}[ptbh]
		\centering
		\renewcommand\arraystretch{1.25}
		\caption{$L^{2}$-norm of error and rate for 2D DLN-SAV scheme in time ($N = 500$)} 
		\begin{tabular}{ccccccccc}
			\hline
			\hline
			& $\!k \!$  & $\!\| u \!-\! u^{h} \|_{\ell^{\infty}(L^{2})}\!$ & $\!R\!$
			& $\!\| u \!-\! u^{h} \|_{\ell^{2}(L^{2})}\!$ & $\!R\!$ 
			& $\!\| u \!-\! u^{h} \|_{\ell^{2}(H^{1})}\!$ & $\!R\!$
			\Tstrut
			\\
			\hline 
			\hline 
			$\!\!\!\!\!\theta \!=\! \frac{2}{3}$ & $0.4$     & 1.47e-3   & -   & 1.25e-3   &  - & 1.77e-3   & -
			\Tstrut \\
			& $0.2$      &  5.08e-4  & 1.53  & 3.92e-4   & 1.67  & 5.55e-4   & 1.67
			\\
			& $0.1$      & 1.45e-4   & 1.81   & 1.07e-4   & 1.88  & 1.51e-4   & 1.88
			\\
			& $0.05$    & 3.84e-5   & 1.92   & 2.75e-5   & 1.96  &  3.91e-5 &  1.95
			\\
			\hline
			\hline 
			$\!\!\!\!\!\theta \!=\! \frac{2}{\sqrt{5}}$ & $0.4$     & 2.07e-3   &  -  &  1.76e-3  & -  &   2.49e-3 & -
			\Tstrut \\
			& $0.2$      & 7.39e-4   & 1.49   &  5.71e-4   & 1.63   & 8.07e-4   & 1.63
			\\
			& $0.1$      & 2.15e-4   & 1.78   &  1.58e-4   & 1.86   & 2.23e-4   & 1.86
			\\
			& $0.05$    & 5.72e-5   & 1.91   &  4.09e-5   & 1.94   & 5.80e-5   & 1.94
			\\
			\hline
			\hline 
			$\!\!\!\!\!\theta \!=\! 1$ & $0.4$     & 2.36e-3   & -    & 2.01e-3  & -   & 2.84e-3   & -
			\Tstrut \\
			& $0.2$       & 8.53e-4   & 1.47   & 6.58e-4   & 1.61  &  9.31e-4   & 1.61
			\\
			& $0.1$       & 2.49e-4   & 1.77   & 1.83e-4   & 1.85  & 2.59e-4    & 1.85
			\\
			& $0.05$     & 6.67e-5   & 1.90   & 4.77e-5   & 1.94  & 6.76e-5    & 1.94
			\\
			\hline
			\hline
		\end{tabular}
		\label{table:DLN-SAV-2D-dt}
	\end{table}

    \begin{table}[ptbh]
		\centering
		\renewcommand\arraystretch{1.25}
		\caption{$L^{2}$-norm of error and rate for 2D DLN-SAV scheme in space ($k=0.01$)} 
		\begin{tabular}{ccccccccc}
			\hline
			\hline
			& $\!N\! $  & $\!\| u \!-\! u^{h} \|_{\ell^{\infty}(L^{2})}\!$ & $\!R\!$
			& $\!\| u \!-\! u^{h} \|_{\ell^{2}(L^{2})}\!$ & $\!R\!$ 
			& $\!\| u \!-\! u^{h} \|_{\ell^{2}(H^{1})}\!$ & $\!R\!$
			\Tstrut \\
			\hline
			\hline 
			$\!\!\!\!\! \theta \!=\! \frac{2}{3}$ & $20$     &  4.97e-5 &  -   &  2.57e-5  & -   &  1.48e-3  & -
			\Tstrut \\
			& $40$      &  6.08e-6  & 3.03  &  3.18e-6  & 3.01  &  3.78e-4  &  1.97
			\\
			& $60$      &  1.52e-6  & 3.43  &  8.07e-7  & 3.38  &  1.59e-4  &  2.14
			\\
			& $80$      &  5.80e-7  & 3.34  &  3.15e-7  & 3.27  &  8.81e-5  &  2.04
			\\
			& $100$    &  2.71e-7  & 3.41  &  1.49e-7  & 3.35  &  5.48e-5  &  2.13
			\\
			\hline
			\hline 
			$\!\!\!\!\! \theta \!=\! \frac{2}{\sqrt{5}}$ & $20$     &  4.97e-5 &  -   &  2.56e-5  & -   &  1.48e-3  & -
			\Tstrut \\
			& $40$      &  6.08e-6  &  3.03  &  3.18e-6  & 3.01  & 3.78e-4  & 1.97
			\\
			& $60$      &  1.52e-6  &  3.42  &  8.06e-7  & 3.38  & 1.59e-4  & 2.14
			\\
			& $80$      &  5.84e-7  &  3.32  &  3.16e-7  & 3.25  & 8.81e-5  & 2.04
			\\
			& $100$    &  2.78e-7  &  3.32  &  1.53e-7  & 3.27  & 5.48e-5  & 2.13
			\\
			\hline
			\hline 
			$\!\!\!\!\! \theta \!=\! 1$ & $20$     &  4.96e-5 &  -   &  2.56e-5  & -   &  1.48e-3  & -
			\Tstrut \\
			& $40$      &  6.07e-6  &  3.03  &  3.17e-6  &  3.01  &  3.78e-4  &  1.97
			\\
			& $60$      &  1.52e-6  &  3.42  &  8.06e-7  &  3.38  &  1.59e-4  &  2.14
			\\
			& $80$      &  5.86e-7  &  3.31  &  3.17e-7  &  3.24  &  8.81e-5  &  2.04
			\\
			& $100$    &  2.83e-7  &  3.26  &  1.55e-7  &  3.22  &  5.47e-5  &  2.13
			\\
			\hline
			\hline
		\end{tabular}
		\label{table:DLN-SAV-2D-h}
	\end{table}

    \subsubsection{Variable Time Step}
    Similar to Section 6.1.2, we test the accuracy of the variable time-stepping modified DLN algorithm \eqref{eq:DLN-CSS-Alg} and DLN-SAV algorithm \eqref{eq:DLN-SAV-Alg-weak} over time for the 2D case, utilizing the randomized time step $k_n = k + k \cdot \text{rand}$. The final time is $T = 4.0$ and the number of nodes $N=500$ on each side of the boundaries. The accuracy rate calculated using $k_{\text{max}}$ again demonstrates second-order temporal convergence rate numerically, as seen in Tables \ref{table:DLN-CSS-2D-random-t} and \ref{table:DLN-SAV-2D-random-t}. 

    \begin{table}[ptbh]
		\centering
		\renewcommand\arraystretch{1.25}
		\caption{$L^{2}$-norm of error and rate for 2D modified DLN scheme in time ($k_n = k + k \cdot rand; N = 500$)} 
		\begin{tabular}{ccccccccc}
			\hline
			\hline
			& $k$ & $\!k_{\text{max}}\! $  & $\!\| u \!-\! u^{h} \|_{\ell^{\infty}(L^{2})}\!$ & $\!R\!$
			& $\!\| u \!-\! u^{h} \|_{\ell^{2}(L^{2})}\!$ & $\!R\!$ 
			& $\!\| u \!-\! u^{h} \|_{\ell^{2}(H^{1})}\!$ & $\!R\!$
			\Tstrut
			\\
			\hline
			\hline
			$\!\!\!\!\!\theta \!=\! \frac{2}{3}$ & $0.4$ & 0.7818  & 1.67e-2   & - & 1.59e-2    & -         & 2.24e-2   & - 
			\Tstrut
			\\
                & $0.3$ & 0.5851  & 8.64e-3   & 2.27 & 7.39e-3    & 2.63    & 1.05e-2   & 2.63   
			\\
			& $0.2$ & 0.3683   & 3.58e-3   & 1.90 & 2.94e-3    & 1.99    & 4.16e-3   & 1.99  
			\\
			& $0.1$ & 0.1987  & 9.36e-4   & 2.18 & 7.19e-4    & 2.28    & 1.02e-3   & 2.28   
			\\
			\hline
			\hline
			$\!\!\!\!\! \theta \!=\! \frac{2}{\sqrt{5}}$ & $0.4$ & 0.7818  & 8.10e-3  & -  & 7.69e-3   & -         & 1.09e-2   & -        
			\Tstrut
                \\
			& $0.3$ & 0.5851 & 4.26e-3   & 2.22 & 3.64e-3   & 2.58    & 5.15e-3   & 2.58   
			\\
			& $0.2$ & 0.3683 & 1.77e-3   & 1.89 & 1.45e-3   & 1.99    & 2.05e-3   & 1.99   
			\\
			& $0.1$ & 0.1987  & 4.71e-4   & 2.15 & 3.59e-4  & 2.26    & 5.08e-4   & 2.26   
			\\
			\hline
			\hline
			$\!\!\!\!\! \theta \!=\! 1$ & $0.4$  & 0.7818  & 3.61e-3   & - & 3.41e-3   & -         & 4.83e-3   & -        
			\Tstrut
                \\
			& $0.3$ & 0.5851  & 1.93e-3   & 2.16 & 1.64e-3   & 2.54    & 2.32e-3   & 2.54  
			\\
			& $0.2$ & 0.3683  & 7.94e-4   & 1.92 & 6.46e-4   & 2.01    & 9.13e-4   & 2.01   
			\\
			& $0.1$ & 0.1987  & 2.14e-4   & 2.12 & 1.60e-4  & 2.26    & 2.27e-4  & 2.26   
			\\
			\hline
			\hline
		\end{tabular}
		\label{table:DLN-CSS-2D-random-t}
	\end{table}

    \begin{table}[ptbh]
		\centering
		\renewcommand\arraystretch{1.25}
		\caption{$L^{2}$-norm of error and rate for 2D DLN-SAV scheme in time ($k_n = k + k \cdot rand; N = 500$)} 
		\begin{tabular}{ccccccccc}
			\hline
			\hline
			& $k$ & $\!k_{\text{max}}\! $  & $\!\| u \!-\! u^{h} \|_{\ell^{\infty}(L^{2})}\!$ & $\!R\!$
			& $\!\| u \!-\! u^{h} \|_{\ell^{2}(L^{2})}\!$ & $\!R\!$ 
			& $\!\| u \!-\! u^{h} \|_{\ell^{2}(H^{1})}\!$ & $\!R\!$
			\Tstrut
			\\
			\hline
			\hline
			$\!\!\!\!\!\theta \!=\! \frac{2}{3}$ 
                & $0.4$ & 0.7818  & 3.37e-3   & -    & 3.27e-3    & -       & 4.63e-3   & - 
			\Tstrut
			\\
                & $0.3$ & 0.5851  & 2.17e-3   & 1.52 & 1.87e-3    & 1.94    & 2.64e-3   & 1.94  
			\\
			& $0.2$ & 0.3683   & 1.10e-3   & 1.47 & 9.02e-4   & 1.57    & 1.28e-3   & 1.57 
			\\
			& $0.1$ & 0.1987  & 3.10e-4   & 2.05 & 2.33e-4    & 2.20    & 3.29e-4   & 2.20   
			\\
			\hline
			\hline
			$\!\!\!\!\! \theta \!=\! \frac{2}{\sqrt{5}}$ 
                & $0.4$ & 0.7818  & 4.25e-3   & -    & 4.12e-3    & -       & 5.83e-3   & - 
			\Tstrut
			\\
                & $0.3$ & 0.5851  & 2.87e-3   & 1.35 & 2.47e-3    & 1.76    & 3.50e-3   & 1.76  
			\\
			& $0.2$ & 0.3683   & 1.52e-3   & 1.38 & 1.25e-3   & 1.48    & 1.76e-3   & 1.48 
			\\
			& $0.1$ & 0.1987  & 4.48e-4   & 1.98 & 3.36e-4    & 2.13    & 4.75e-4   & 2.13   
			\\
			\hline
			\hline
			$\!\!\!\!\! \theta \!=\! 1$ 
                & $0.4$ & 0.7818  & 4.65e-3   & -    & 4.51e-3    & -       & 6.37e-3   & - 
			\Tstrut
			\\
                & $0.3$ & 0.5851  & 3.21e-3   & 1.28 & 2.76e-3    & 1.69    & 3.90e-3   & 1.69
			\\
			& $0.2$ & 0.3683   & 1.72e-3   & 1.35 & 1.41e-3   & 1.45    & 2.00e-3   & 1.45
			\\
			& $0.1$ & 0.1987  & 5.16e-4   & 1.95 & 3.87e-4    & 2.10    & 5.47e-4   & 2.10  
			\\
			\hline
			\hline
		\end{tabular}
		\label{table:DLN-SAV-2D-random-t}
	\end{table}

    \subsubsection{Adaptive Algorithms}
    Next, we compare Algorithm \ref{alg:Adaptivity-AB2-like} and the corresponding constant time-stepping algorithms to show the accuracy and efficiency of time adaptivity. 
    e set final time $T = 1$, model parameter $\epsilon = 0.01$, the number of nodes $N = 50$ on each side of the boundaries for both adaptive and constant algorithms. 
    For adaptive algorithms, we set the maximum time step $k_{\rm{max}} = 0.1$,  the minimum time step $k_{\rm{min}} = 1.\rm{e}-5$, two initial time step $k_{0} = k_{1} = 1.\rm{e}-3$, tolerance $\rm{Tol} = 1.\rm{e}-8$ and safety factor $\kappa = 0.8$. We use time step $k = 1.\rm{e}-3$ for constant time-stepping algorithms. 
    From Table \ref{table:Adapt-Const-DLN-CSS-2D}, we can see that adaptive modified DLN algorithms, especially with $\theta = 2/3$ or $1$, work more efficiently than the corresponding constant time-stepping algorithms, since the adaptive algorithms take much fewer number of time steps and obtain errors comparable to those of the constant step algorithms. 
    For DLN-SAV algorithms, we observe from Table \ref{table:Adapt-Const-DLN-SAV-2D} that adaptive algorithms have the same error magnitude as constant time-stepping algorithms. However, adaptive algorithms with all three $\theta$ values finish the simulation only in $45$ time steps while the constant step algorithms take $1000$ time steps.

    \begin{table}[ptbh]
		\centering
		\renewcommand\arraystretch{1.25}
		\caption{Errors and number of time steps of the adaptive and constant time-stepping 2D modified DLN schemes} 
		\begin{tabular}{cccccc}
			\hline
			\hline
			\multicolumn{6}{c}{Adaptive modified DLN schemes}          
			\Tstrut \\
			\hline
			$\!\!\!\!\theta \!$  & $\!\| u \!-\! u^{h} \|_{\ell^{\infty}(L^{2})} \!$  & $\!\| u \!-\! u^{h} \|_{\ell^{2}(L^{2})}\!$ 
			& $\!\| u \!-\! u^{h} \|_{\ell^{2}(H^{1})}\!$ & \# Steps & \# Rejections 
			\Tstrut \\
			\hline 
			$\!\!\!\!\!\theta \!=\! \frac{2}{3}$                 & 3.02e-6    & 1.79e-6    & 2.37e-4    & 280    & 73    
			%\Tstrut 
			\\
			$\!\!\!\!\! \theta \!=\! \frac{2}{\sqrt{5}}$      & 4.86e-6    & 2.96e-6    & 2.38e-4    & 93      & 18    
			\\
			$\!\!\!\!\! \theta \!=\! 1$                              & 2.96e-6    & 1.60e-6    & 2.37e-4    & 48      & 18    
			\\
			\hline
			\hline
			\multicolumn{6}{c}{Constant time-stepping modified DLN schemes}          
			\Tstrut \\
			\hline
			$\!\!\!\!\theta \!$  & $\!\| u \!-\! u^{h} \|_{\ell^{\infty}(L^{2})} \!$  & $\!\| u \!-\! u^{h} \|_{\ell^{2}(L^{2})}\!$ 
			& $\!\| u \!-\! u^{h} \|_{\ell^{2}(H^{1})}\!$ & \# Steps & \# Rejections 
			\Tstrut \\
			\hline
			$\!\!\!\!\!\theta \!=\! \frac{2}{3}$                  &  2.97e-6    & 1.56e-6    & 2.37e-4     & 1000    & - 
			\\
			$\!\!\!\!\! \theta \!=\! \frac{2}{\sqrt{5}}$       &  2.97e-6    & 1.56e-6    & 2.37e-4     & 1000    & - 
			\\
			$\!\!\!\!\! \theta \!=\! 1$                               &  2.97e-6    & 1.56e-6    & 2.37e-4     & 1000    & - 
			\\
			\hline
		\end{tabular}
		\label{table:Adapt-Const-DLN-CSS-2D}
	\end{table}

    \begin{table}[ptbh]
		\centering
		\renewcommand\arraystretch{1.25}
		\caption{Errors and number of time steps of the adaptive and constant time-stepping 2D DLN-SAV schemes} 
		\begin{tabular}{cccccc}
			\hline
			\hline
			\multicolumn{6}{c}{Adaptive DLN-SAV schemes}          
			\Tstrut \\
			\hline
			$\!\!\!\!\theta \!$  & $\!\| u \!-\! u^{h} \|_{\ell^{\infty}(L^{2})} \!$  & $\!\| u \!-\! u^{h} \|_{\ell^{2}(L^{2})}\!$ 
			& $\!\| u \!-\! u^{h} \|_{\ell^{2}(H^{1})}\!$ & \# Steps & \# Rejections 
			\Tstrut
			\\
			\hline 
			$\!\!\!\!\!\theta \!=\! \frac{2}{3}$                 & 3.03e-6    & 1.62e-6    & 2.37e-4   & 45    & 9      
			%\Tstrut 
			\\
			$\!\!\!\!\! \theta \!=\! \frac{2}{\sqrt{5}}$      & 3.08e-6    & 1.65e-6    & 2.37e-4   & 44    & 9      
			\\
			$\!\!\!\!\! \theta \!=\! 1$                              & 3.11e-6    & 1.66e-6    & 2.37e-4   & 44    & 15    
			\\
			\hline
			\hline
			\multicolumn{6}{c}{Constant time-stepping DLN-SAV schemes}          
			\Tstrut \\
			\hline
			$\!\!\!\!\theta \!$  & $\!\| u \!-\! u^{h} \|_{\ell^{\infty}(L^{2})} \!$  & $\!\| u \!-\! u^{h} \|_{\ell^{2}(L^{2})}\!$ 
			& $\!\| u \!-\! u^{h} \|_{\ell^{2}(H^{1})}\!$ & \# Steps & \# Rejections 
			\Tstrut \\
			\hline
			$\!\!\!\!\!\theta \!=\! \frac{2}{3}$                  & 2.97e-6    & 1.56e-6    & 2.37e-4    & 1000    & -    
			\\
			$\!\!\!\!\! \theta \!=\! \frac{2}{\sqrt{5}}$       & 2.97e-6    & 1.56e-6    & 2.37e-4    & 1000    & -    
			\\
			$\!\!\!\!\! \theta \!=\! 1$                               & 2.97e-6    & 1.56e-6    & 2.37e-4    & 1000    & -    
			\\
			\hline
		\end{tabular}
		\label{table:Adapt-Const-DLN-SAV-2D}
	\end{table}

    \subsection{Two-dimensional test with random initial value}
	In this example, we simulate 2D Allen-Cahn equation with random initial condition \cite{HP19_NMPDE}: 
    \begin{gather*}
        u_0(x,y) = 0.1 \times \text{rand}(x,y) - 0.05, \quad   (x,y) \in [0,2\pi]^2,
    \end{gather*}
    where the function $\text{rand}(x,y)$ generates value uniformly distributed on $[0,1]$ at point $(x,y)$.
    We set the model parameter $\epsilon$ to be $0.1$ and number of nodes on each side of boundaries to be $100$ to generate a triangular mesh.
    We test the adaptive Algorithm \ref{alg:Adaptivity-AB2-like} with periodic boundary condition.

    For both adaptive modified DLN and DLN-SAV algorithms, we set two initial time step $k_{0}=k_{1} = 0.01$, the maximum time step $k_{\rm{max}} = 0.1$, the minimum step $k_{\rm{min}} = 1.\rm{e}-5$, the tolerance for LTE $\rm{Tol} = 1.\rm{e}-6$, and safety factor $\kappa = 0.8$. 
    We use the fixed point iteration with tolerance $1.\rm{e}-8$ for non-linear solver in the adaptive modified DLN algorithm.
    The results of three $\theta$ values ($2/3, 2/\sqrt{5},1$) are very close, thus we only present the numerical results for the case of $\theta = 1$. 
    From Figures \ref{fig:adaptCSSFPIrandperiodic} and \ref{fig:adaptSAVrandperiodic}, we observe that both modified DLN and DLN-SAV algorithms converge to the steady state at the time $T = 320$ with around $5800$ time steps. 
    We also test the corresponding constant time-stepping DLN algorithms with constant step $k = 0.01$ and observe that the constant time-stepping algorithms converge to the steady state with more than $30000$ time steps. 
    From this test problem, we demonstrate that the adaptive DLN algorithms are stable and possess superior time efficiency when compared to the corresponding constant time-stepping algorithms.

    \begin{figure}[ptbh]
		\begin{flushleft}
			\includegraphics[scale=0.172]{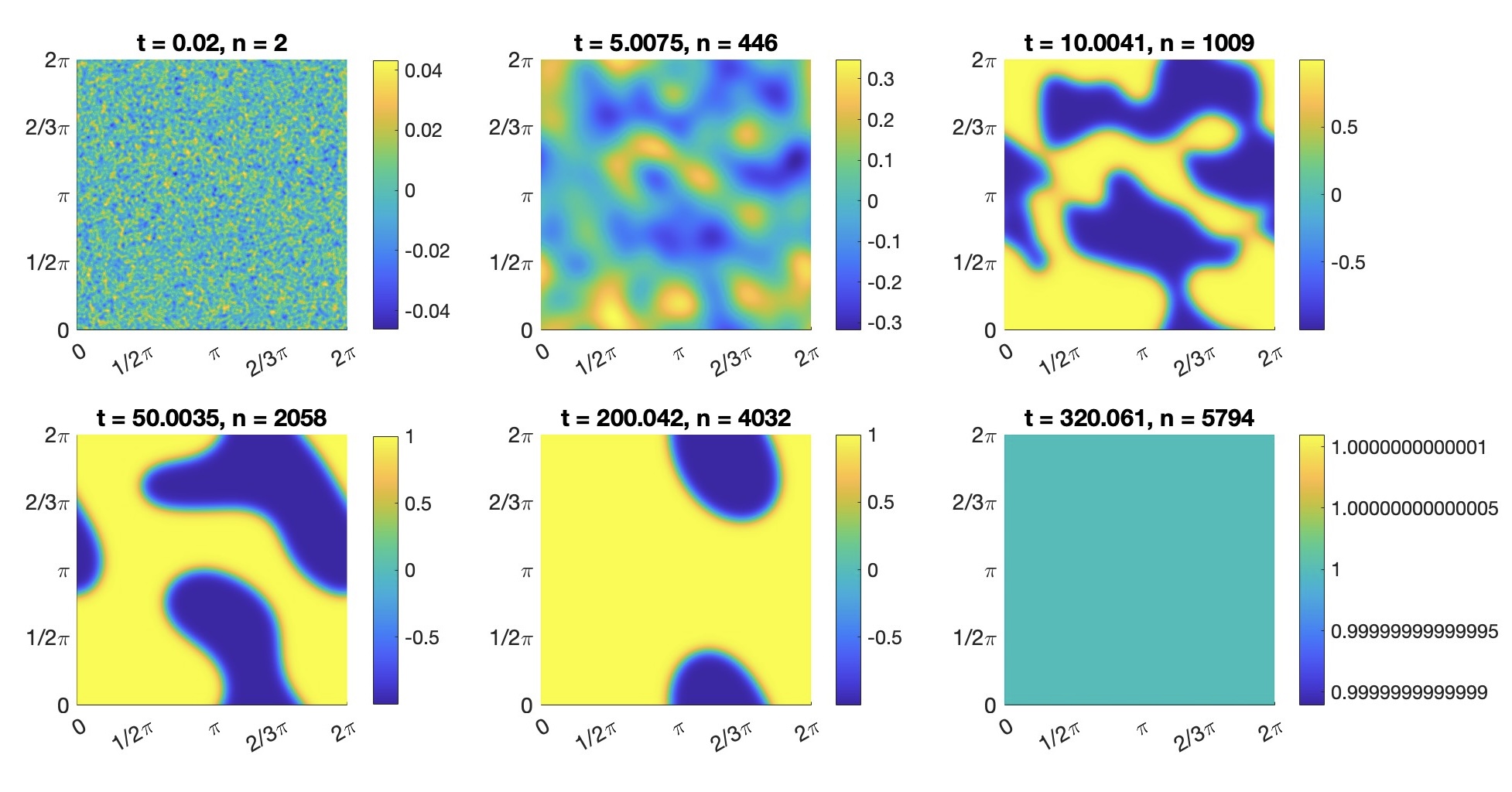}
			\caption{2D Adaptive modified DLN algorithm with $\theta= 1$ converges to steady state with 5794 time steps.}
			\label{fig:adaptCSSFPIrandperiodic}
		\end{flushleft}
	\end{figure}

    \begin{figure}[ptbh]
		\begin{flushleft}
			\includegraphics[scale=0.172]{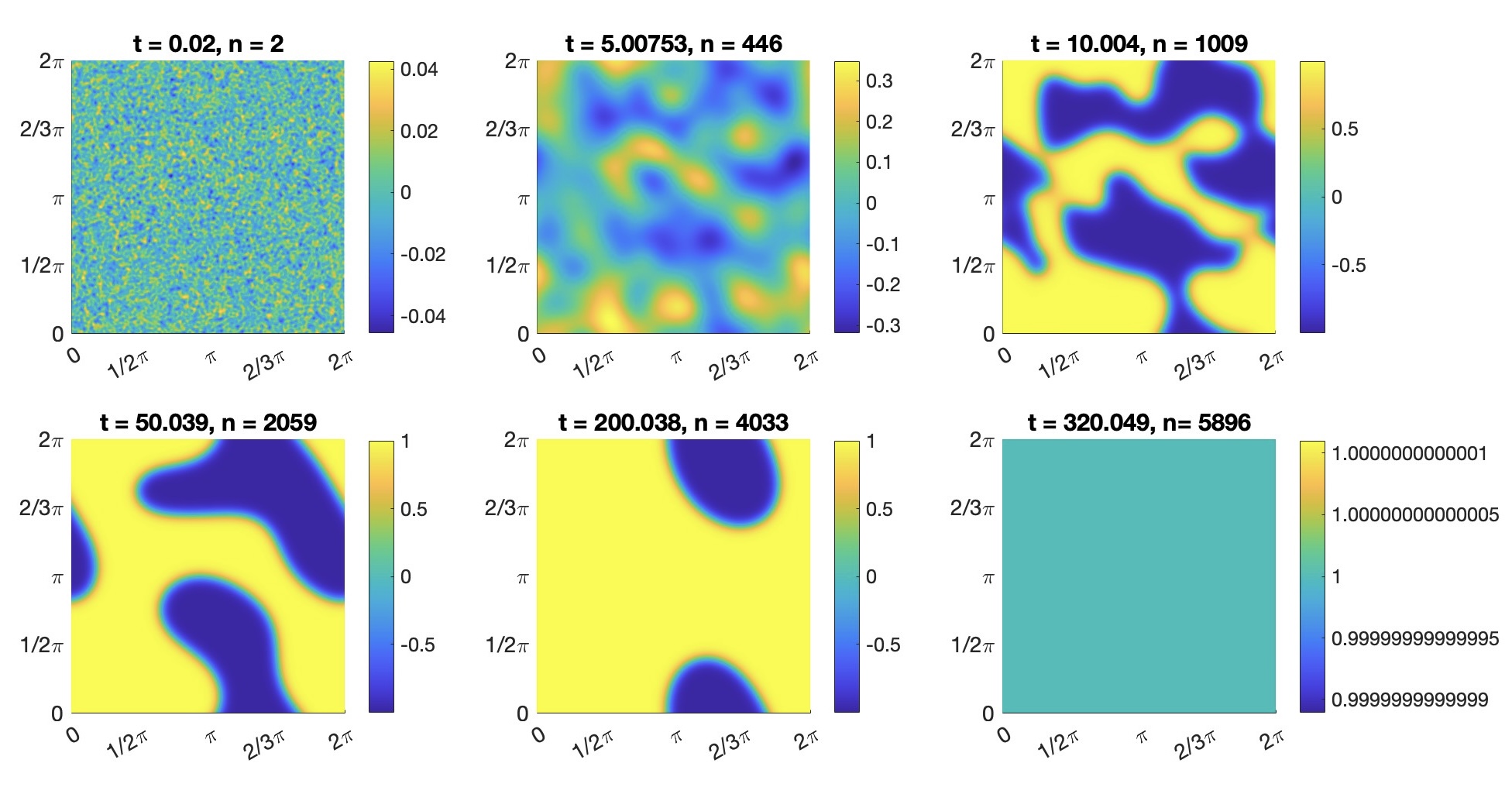}
			\caption{2D Adaptive DLN-SAV algorithm with $\theta= 1$ converges to steady state with 5896 time steps. }
			\label{fig:adaptSAVrandperiodic}
		\end{flushleft}
	\end{figure}

    \section{Conclusion} \label{sec7}
    In this paper, we propose two time-efficient algorithms for the Allen-Cahn equation: the modified DLN scheme and the DLN-SAV scheme. 
    We prove that the modified DLN scheme unconditionally satisfies the discrete energy dissipation law, and its numerical solutions under uniform time grids are second-order accurate in time. 
    When $\theta = 1$, the scheme reduces to the modified midpoint rule, and its numerical solutions are second-order in time on arbitrary time grids. 
    For the variable step DLN-SAV scheme, we show that the scheme approximately satisfies the discrete dissipation law. Moreover, its implementation can be simplified equivalently through a refactorization process on the BE-SAV algorithm. 
    To further enhance the performance of both algorithms, we utilized a time-adaptive mechanism based on the LTE criterion.

    We validate our methods through three main numerical tests. 
    The 1D traveling wave test and the 2D test with an extra source term show that both algorithms are second-order accurate under arbitrary time step sizes. 
    The 2D test with random initial conditions also verify that the modified DLN scheme and the DLN-SAV scheme are long-time stable under non-uniform time grids. 
    All three tests demonstrate that the time-adaptive versions of both algorithms outperform the corresponding constant step algorithms. 
    These adaptive algorithms achieve the same level of accuracy while requiring significantly fewer time steps.

    \begin{appendices}
        \section{Proof of Lemma \ref{lemma:consistency-1st}}
		\label{appendixA} \ \\
        For any $\theta \in [0,1)$, applying fundamental theorem of Calculus leads to
			\begin{align*}
				u_{n+1,\theta}(x) - u_{n,\theta}(x)
				&= \frac{1+\theta}{2} \big( u_{n+1}(x) - u_{n}(x) \big) 
				    + \frac{1-\theta}{2} \big( u_{n}(x) - u_{n-1}(x) \big) \\
				&= \frac{1+\theta}{2} \int_{t_{n}}^{t_{n+1}} u_{t}(x,t) dt 
				    + \frac{1-\theta}{2} \int_{t_{n-1}}^{t_{n}} u_{t}(x,t) dt.
			\end{align*}
		By H\"older's inequality, we have
			\begin{align*}
				\big\| (u_{n+1,\theta} - u_{n,\theta})^{2} \big\|^{2} 
				\leq& \Big( \frac{1+\theta}{2} \Big)^{4} \int_{\Omega} \Big( \int_{t_{n-1}}^{t_{n+1}} |u_{t}(x,t)| dt \Big)^{4} dx \\
				\leq& C(\theta) \int_{\Omega} \Big( \int_{t_{n-1}}^{t_{n+1}} 1 dt \Big)^{3}
				\Big( \int_{t_{n-1}}^{t_{n+1}} |u_{t}(x,t)|^{4} dt \Big) dx,
			\end{align*}
		which implies \eqref{eq:consist-1st-eq1}. 
		Similarly, 
			\begin{align*}
				&\frac{u_{n+1,\theta} + u_{n,\theta}}{2} - u(t_{n,\beta})
                    =\frac{u_{n+1,\theta} + u_{n,\theta}}{2} - u(t_n) +u(t_n) - u(t_{n,\beta})\\
				& \qquad = \frac{1 + \theta}{4} \int_{t_{n}}^{t_{n+1}} u_{t}(x,t) dt 
				- \frac{1 - \theta}{4} \int_{t_{n-1}}^{t_{n}} u_{t}(x,t) dt 
				- \int_{t_{n}}^{t_{n,\beta}} u_{t}(x,t) dt.
			\end{align*}
		Then Eq. \eqref{eq:consist-1st-eq2} follows by the fact $t_{n,\beta} \in [t_{n-1},t_{n+1}]$ and using H\"older's inequality:
			\begin{align*}
				\Big\| \frac{u_{n+1,\theta} + u_{n,\theta}}{2} - u(t_{n,\beta}) \Big\|^{2}
				\leq& \Big( \frac{1+\theta}{4} \Big)^{2} \int_{\Omega} \Big( \int_{t_{n-1}}^{t_{n+1}} |u_{t}(x,t)| dt \Big)^{2} dx \\
				\leq& C(\theta) \int_{\Omega} \Big( \int_{t_{n-1}}^{t_{n+1}} 1 dt \Big)
				\Big( \int_{t_{n-1}}^{t_{n+1}} |u_{t}(x,t)|^{2} dt \Big) dx.
			\end{align*}
		% from which we have \eqref{eq:consist-1st-eq2}. 
        For the case of $\theta = 1$, the corresponding conclusions for the midpoint rule are easy to verify and the details are omitted here. 

        Next, we consider the case of uniform time grids with constant time step $k$ and aim to prove \eqref{eq:consist-1st-eq2-const}. Applying Taylor's theorem with integral remainder 
			\begin{align}
				u ( x, t_{n+1} ) &= u ( x, t_{n} ) + u_{t} ( x, t_{n} ) k 
				+ \int_{t_{n}}^{t_{n+1}} u_{tt} ( x, t ) ( t_{n+1} - t ) dt, 
				\label{eq:AppendixA-Int-remainder} \\
				u ( x, t_{n-1} ) &= u ( x, t_{n} ) - u_{t} ( x, t_{n} ) k
				+ \int_{t_{n}}^{t_{n-1}} u_{tt} ( x, t ) ( t_{n-1} - t ) dt, \notag \\
				u ( x, t_{n,\beta} ) &= u ( x, t_{n} ) + u_{t} ( x, t_{n} ) ( t_{n,\beta} - t_{n} ) 
				+ \int_{t_{n}}^{t_{n,\beta}} u_{tt} ( x, t ) ( t_{n,\beta} - t ) dt \notag \\
				&= u ( x, t_{n} ) + u_{t} ( x, t_{n} ) \big( {\beta_{2}^{(n)}}  - {\beta_{0}^{(n)}} \big) k
				+ \int_{t_{n}}^{t_{n,\beta}} u_{tt} ( x, t ) ( t_{n,\beta} - t ) dt, \notag 
			\end{align}
		with (under uniform time grids)
			\begin{gather*}
				\beta_{2}^{(n)} = \frac{1}{4} (2 + \theta - \theta^2 ), \qquad 
				\beta_{0}^{(n)} = \frac{1}{4} (2 - \theta - \theta^2 ),
			\end{gather*}
        gives
			\begin{align*}
				&\frac{u_{n+1,\theta} + u_{n,\theta}}{2} - u(x,t_{n,\beta}) \\
                    & = \frac{1 + \theta}{4} u(x,t_{n+1}) + \frac12 u(x,t_n) + \frac{1 - \theta}{4} u(x,t_{n-1}) - u(x,t_{n,\beta}) \\
				&= \frac{1 + \theta}{4} \int_{t_{n}}^{t_{n+1}} u_{tt} ( x, t ) ( t_{n+1} - t ) dt
				+ \frac{1 - \theta}{4} \int_{t_{n}}^{t_{n-1}} u_{tt} ( x, t ) ( t_{n-1} - t ) dt \\
				&\quad - \int_{t_{n}}^{t_{n,\beta}} u_{tt} ( x, t ) ( t_{n,\beta} - t ) dt.
			\end{align*}
		Therefore, Eq. \eqref{eq:consist-1st-eq2-const} follows from 
			\begin{align*}
				&\Big\|  \frac{u_{n+1,\theta} + u_{n,\theta}}{2} - u(t_{n,\beta}) \Big\|^{2} \\
				\leq& C(\theta)\!\!\! \int_{\Omega}\!\! \! \Bigg( \int_{t_{n}}^{t_{n\!+\!1}} \!\!\! |u_{tt} (x, t)|^{2} dt  \!\! \int_{t_{n}}^{t_{n\!+\!1}} \!\!\!(t_{n+1} \!-\! t)^{2} dt 
				\!\!+\!\! \int_{t_{n\!-\!1}}^{t_{n}}\!\!\! |u_{tt} (x,t) |^{2} dt 
				\!\! \int_{t_{n\!-\!1}}^{t_{n}}\!\!\! (t \!-\! t_{n\!-\!1})^{2} dt \notag \\
				&\qquad \qquad + \int_{t_{n}}^{t_{n,\beta}}\!\!\! | u_{tt} (x,t)|^{2} dt \!\! \int_{t_{n}}^{t_{n,\beta}} \!\! ( t_{n,\beta} \!-\! t )^{2} dt \Bigg) dx 
				\notag \\
				\leq& C(\theta) k^{3} \int_{\Omega} \int_{t_{n-1}}^{t_{n+1}}  |u_{tt} (x,t) |^{2} dt dx.
			\end{align*}

        \section{Proof of Lemma \ref{lemma:consistency-2nd}} 
        \label{appendixB} \ \\
        As in the proof of Lemma \ref{lemma:consistency-1st}, we only prove the case $\theta \in [0,1)$ below. The case for $\theta=1$ can be shown easily following the same procedure.
		Also, it suffices to consider the case $r = 0$ below. 
		Combining Taylor's theorem with integral remainder \eqref{eq:AppendixA-Int-remainder} (but with variable time step sizes $k_n$ and $k_{n-1}$)    
        with the fact that $\beta_{2}^{(n)} + \beta_{1}^{(n)} + \beta_{0}^{(n)} = 1$ leads to
			\begin{align}
				& u_{n,\beta}(x) - u(x,t_{n,\beta})   
				\label{eq:AppendB-eq1} \\
				=\, &{\beta_{2}^{(n)}}  \int_{t_{n}}^{t_{n+1}}  u_{tt} ( x, t ) ( t_{n\!+\!1} - t ) dt 
				+ {\beta_{0}^{(n)}}  \int_{t_{n}}^{t_{n-1}}  u_{tt} ( x, t ) ( t_{n\!-\!1} - t ) dt \notag \\
				&-  \int_{t_{n}}^{t_{n,\beta}}  u_{tt} ( x, t ) ( t_{n,\beta} - t ) dt. \notag 
			\end{align}
		Utilizing the H\"older's inequality, and bounding $\beta_{\ell}^{(n)}$ by $C(\theta)$, we have  
			\begin{align*}
				&\big\| u_{n,\beta}(\cdot) - u(\cdot,t_{n,\beta})  \big\|^{2} \notag \\
				\leq & \,C(\theta)\!\!\! \int_{\Omega} \Big[ \int_{t_{n}}^{t_{n+1}} | u_{tt} (x,t)| (t_{n\!+\!1} \!-\! t) dt 
				+ \int_{t_{n-1}}^{t_{n}} | u_{tt} (x,t)| (t \!-\! t_{n\!-\!1} ) dt \\
				& \quad \qquad + \int_{t_{n}}^{t_{n,\beta}} | u_{tt} (x,t) | |t_{n,\beta} \!-\! t| dt  \Big]^{2} dx 
				\notag \\
				\leq& \,C(\theta)\!\!\! \int_{\Omega}\!\! \Big\{\! \Big[ \!\! \int_{t_{n}}^{t_{n\!+\!1}} \!\!\! |u_{tt} (x, t)|^{2} dt  \!\!\! \int_{t_{n}}^{t_{n\!+\!1}} \!\!\!(t_{n+1} \!-\! t)^{2} dt \Big]^{\!\frac{1}{2}}
				\!\!+\!\! \Big[ \!\! \int_{t_{n\!-\!1}}^{t_{n}}\!\!\! |u_{tt} (x,t) |^{2} dt 
				\!\!\! \int_{t_{n\!-\!1}}^{t_{n}}\!\!\! (t \!-\! t_{n\!-\!1})^{2} dt \Big]^{\!\frac{1}{2}}  \notag \\
				&\qquad \qquad + \Big[ \!\!\int_{t_{n}}^{t_{n,\beta}}\!\!\! | u_{tt} (x,t)|^{2} dt \!\! \int_{t_{n}}^{t_{n,\beta}} \!\! ( t_{n,\beta} \!-\! t )^{2} dt \Big]^{\!\frac{1}{2}} \!\Big\}^{2} dx 
				\notag \\
				\leq& \,C(\theta) (k_{n} + k_{n-1})^{3} \!\int_{\Omega}\! \int_{t_{n-1}}^{t_{n+1}}  |u_{tt} (x,t) |^{2} dt dx,
				\notag 
			\end{align*}
		which implies \eqref{eq:consist-2nd-eq1} with $r=0$. 

        Next, we consider Eq. \eqref{eq:consist-2nd-eq2}.  We again consider Taylor's theorem with integral remainder
			\begin{align*}
				u (x,t_{n\!+\!1}) \!&=\! u(x,t_{n}) \!+\! u_{t}(x, t_{n}) {k_{n}} 
				\!+\! u_{tt}(x,t_{n}) \frac{k_{n}^{2}}{2} \!+\! \!\!\int_{t_{n}}^{t_{n\!+\!1}} \!\!u_{ttt}(x,t)
				\frac{(t_{n\!+\!1}\!-\!t)^{2}}{2} dt,  \\
				u(x,t_{n\!-\!1}) \!&=\! u(x,t_{n}) \!-\! u_{t}(x, t_{n}) {k_{n\!-\!1}} 
				\!+\! u_{tt}(x,t_{n}) \frac{k_{n\!-\!1}^{2}}{2} \!+\!\!\! \int_{t_{n}}^{t_{n\!-\!1}} \!\!u_{ttt} (x,t) 
				\frac{(t_{n\!-\!1} \!-\! t)^{2}}{2} dt,  \\
				u_{t}(x, t_{n,\beta}) \!&=\! u_{t} (x,t_{n}) \!+\! u_{tt} (x,t_{n}) ( {\beta_{2}^{(n)}} k_{n} \!-\! {\beta_{0}^{(n)}} k_{n\!-\!1} ) \!+\!\!\! \int_{t_{n}}^{t_{n,\beta}} \!\!u_{ttt} (x,t) (t_{n,\beta} \!-\! t) dt.  
			\end{align*}
		and combine them with the fact that $\alpha_{2} + \alpha_{1} + \alpha_{0} = 0$ to obtain
			\begin{align*}
				&\frac{u_{n,\alpha}(x)}{\widehat{k}_{n}} - u_{t}(t_{n,\beta}) 
				\\
				=& \Big[ \frac{{\alpha_{2}} {k_{n}}^{2} \!+\! {\alpha_{0}} {k_{n-1}}^{2}}{2 \widehat{k}_{n}} \!-\! \big( {\beta_{2}^{(n)}} k_{n} \!-\! {\beta_{0}^{(n)}} k_{n-1} \big) \Big] u_{tt}(x,t_{n}) \!-\!\! \int_{t_{n}}^{t_{n,\beta}} \!u_{ttt} (x,t) (t_{n,\beta} \!-\! t) dt  \\
				&+ \frac{{\alpha_{2}}}{ \widehat{k}_{n}} \!\!\int_{t_{n}}^{t_{n\!+\!1}} \!\!u_{ttt} (x,t) (t_{n\!+\!1} \!-\! t)^{2} dt 
				+ \frac{{\alpha_{0}}}{ \widehat{k}_{n}} \!\!\int_{t_{n}}^{t_{n\!-\!1}} \!\!u_{ttt} (x,t) (t_{n\!-\!1} \!-\! t)^{2} dt. 
			\end{align*}
		It's easy to check
			\begin{gather}
				\frac{{\alpha_{2}} {k_{n}}^{2} + {\alpha_{0}} {k_{n-1}}^{2}}{2 \widehat{k}_{n}} - \big( {\beta_{2}^{(n)}} k_{n} - {\beta_{0}^{(n)}} k_{n-1} \big) = 0.
				\label{eq:AppendB-eq3}
			\end{gather}
		Therefore,
			\begin{align*}
				&\Big\| \frac{u_{n,\alpha}(\cdot)}{\widehat{k}_{n}} - u_{t} (x,t_{n,\beta}) \Big\|^{2}  \\
				\leq& \frac{C(\theta)}{{\widehat{k}_{n}}^{2}}\!\! \int_{\Omega} \Big[ \int_{t_{n}}^{t_{n+1}} | u_{ttt} (x,t) | (t_{n+1} - t)^{2} dt + \int_{t_{n-1}}^{t_{n}} | u_{ttt} (x,t) | (t_{n-1} - t)^{2} dt  \\
				&\qquad \qquad + \widehat{k}_{n}  \int_{t_{n}}^{t_{n,\beta}} | u_{ttt} (x,t) | | t_{n,\beta} - t | dt \ \Big]^{2} dx  \\
				\leq& \!\!\frac{C(\theta)}{\widehat{k}_{n}^{2}} \!\!\! \int_{\Omega}\!\!\! \Big\{\! \Big[ \!\! \int_{t_{n}}^{t_{n\!+\!1}} \!\!\! |u_{ttt} (x,t)|^{2} dt  \!\!\! \int_{t_{n}}^{t_{n\!+\!1}} \!\!\!(t_{n+1} \!-\! t)^{4} dt \Big]^{\!\frac{1}{2}}
				\!\!+\!\! \Big[ \!\! \int_{t_{n\!-\!1}}^{t_{n}}\!\!\! |u_{ttt} (x,t)|^{2} dt 
				\!\!\! \int_{t_{n\!-\!1}}^{t_{n}}\!\!\! (t \!-\! t_{n\!-\!1})^{4} dt \Big]^{\!\frac{1}{2}}   \\
				&\qquad \qquad + \widehat{k}_{n} \Big[ \!\!\int_{t_{n}}^{t_{n,\beta}}\!\!\! | u_{ttt} (x,t)|^{2} dt \!\!\! \int_{t_{n}}^{t_{n,\beta}} \!\! (t_{n,\beta} \!-\! t)^{2} dt \Big]^{\!\frac{1}{2}} \!\Big\}^{2} dx 
				 \\
				\leq& C(\theta) \frac{(k_{n} + k_{n-1})^2+\widehat{k}_{n}^2}{\widehat{k}_{n}^2} (k_{n} + k_{n-1})^{3} \int_{\Omega} \int_{t_{n-1}}^{t_{n+1}} |u_{ttt} (x,t)|^{2} dt dx    \\
				\leq& C(\theta) (k_{n} + k_{n-1})^{3} \int_{\Omega} \int_{t_{n-1}}^{t_{n+1}} |u_{ttt} (x,t)|^{2} dt dx, 
			\end{align*}
        where the last inequality follows from $\widehat{k}_{n}=\frac{1+\theta}{2}k_{n} + \frac{1-\theta}{2}k_{n-1}$. The case with $r\ge 0$ can be proven in the same way.

    \end{appendices}

    \section*{Acknowledgements} 
    This work was initiated during D. Luo's participation in the 2023 Research Opportunities in Mathematics for Underrepresented Students (ROMUS) program at the Ohio State University (OSU) Department of Mathematics. The authors gratefully acknowledge the support provided by OSU and the NSF grant DMS-1753581.   
    
    \section*{Funding information}
    Y. Xing is partially supported by the National Science Foundation (NSF) grants DMS-1753581 and DMS-2309590.

    \section*{Author contributions} 
    The first two authors Y. Chen and D. Luo contributed equally to this work.

    \bibliographystyle{abbrv}
    %	\bibliography{referenceNew,../database2}
        % \bibliography{Reference_2024}

\end{document}